\documentclass[reqno,a4paper]{amsart}
\usepackage{amsmath,amsthm,amsfonts,amssymb,xcolor,hyperref}
\usepackage[all]{xy}
\usepackage{stmaryrd}
\usepackage{mathtools}
\usepackage{tikz}
\usepackage{mathabx}
\usepackage{calrsfs}
\usetikzlibrary{cd, arrows,arrows.meta}

\newtheorem{lemma}{Lemma}[section]
\newtheorem{proposition}[lemma]{Proposition}
\newtheorem{theorem}[lemma]{Theorem}
\newtheorem{corollary}[lemma]{Corollary}

\theoremstyle{remark}
\newtheorem{example}[lemma]{Example}

\newtheorem{remark}[lemma]{Remark}

\theoremstyle{definition}
\newtheorem{definition}[lemma]{Definition}

\newenvironment{tfae}
{
	\begin{enumerate}}
	{\end{enumerate}}

\newcommand{\Dhead}{-Triangle[open]}
\newcommand{\Dtail}{Triangle[open, reversed]->}

\newcommand{\defn}{\textbf}
\newcommand{\psj}{\curlyvee}
\newcommand{\noproof}{\hfill \qed}

\newcommand{\ua}{{\uparrow}}

\DeclareMathOperator{\Sub}{Sub}

\newcommand{\FWACC}{{\rm (FWACC)}}
\newcommand{\LACC}{{\rm (LACC)}}

\newcommand{\NH}{{\rm (NH)}}
\newcommand{\SH}{{\rm (SH)}}
\newcommand{\SSH}{{\rm (SSH)}}
\newcommand{\W}{{\rm (W)}}

\newcommand{\AC}{{\rm (AC)}}

\newcommand{\gnab}{\text{\rm \textexclamdown}} 
\newcommand{\oleq}{\subseteq} 
\newcommand{\normal}{\trianglelefteq} 
\newcommand{\omeet}{\cap} 
\newcommand{\bigojoin}{\bigcup} 
\newcommand{\ojoin}{\cup}  
\newcommand{\iimplies}{\shortrightarrow}

\newcommand{\HSLat}{\ensuremath{\mathsf{HSLat}}}
\newcommand{\Grp}{\ensuremath{\mathsf{Grp}}}

\DeclareMathOperator{\Ker}{Ker}

\newcommand{\ELHSLat}{\ensuremath{\mathsf{ELHSLat}}}
\DeclareMathOperator{\Huq}{Huq}

\newcommand{\C}{\ensuremath{\mathcal{C}}}
\newcommand{\Pt}{\ensuremath{\mathsf{Pt}}}

\renewcommand{\leq}{\leqslant}
\renewcommand{\nleq}{\nleqslant}

\renewcommand{\geq}{\geqslant}

\begin{document}
\title[Observations on \(\ELHSLat\)]{Observations on the variety of\\ equationally linear Heyting semilattices}
\author[García-Martínez]{X.~García-Martínez}
\author[Gray]{J.~R.~A.~Gray}
\author[Hoefnagel]{M.~A.~Hoefnagel}
\author[Van~der Linden]{T.~Van~der Linden}
\author[Vienne]{C.~Vienne}

\address[Xabier García-Martínez]{CITMAga \& Universidade de Santiago de Compostela, Departamento de Mate\-máticas, R\'ua Lope G\'omez de Marzoa s/n, 15782 Santiago de Compostela, Spain}
\email{xabier.garcia@usc.gal}
\address[James R.\ A.\ Gray, Michael A.\ Hoefnagel]{Mathematics Division, Department of Mathematical Sciences, Stellenbosch University, Private Bag X1, 7602 Matieland, South Africa}
\email{jamesgray@sun.ac.za}
\email{mhoefnagel@sun.ac.za}
\address[Tim Van~der Linden, Corentin Vienne]{Institut de
	Recherche en Math\'ematique et Physique, Universit\'e catholique
	de Louvain, che\-min du cyclotron~2 bte~L7.01.02, B--1348
	Louvain-la-Neuve, Belgium}
\email{tim.vanderlinden@uclouvain.be}
\email{corentin.vienne@uclouvain.be}
\address[Tim Van der Linden]{Mathematics \& Data Science, Vrije Universiteit Brussel, Pleinlaan 2, B--1050 Brussel, Belgium}
\email{tim.van.der.linden@vub.be}
\address[Corentin Vienne]{Dipartimento di Matematica ``Federigo Enriques'', Universit\`a degli Studi di Milano, Via Saldini 50, 20133 Milano, Italy}

\thanks{The first author is supported by grant PID2024-155502NB-I00 supported by MICIU/AEI /10.13039/501100011033 and FEDER, UE, and by Xunta de Galicia through the Competitive Reference Groups (GRC), ED431C	2023/31. The fourth author is a Senior Research Associate of the Fonds de la Recherche Scientifique--FNRS. The fifth author is supported by means of a \emph{bourse excellence mobilité} of WBI.WORLD}

\begin{abstract}
	In previous work, we analysed a number of categorical properties, of interest in the context of Janelidze--Márki--Tholen semi-abelian categories, for the variety \(\HSLat\) of Heyting semilattices. In this paper, we focus on the subvariety \(\ELHSLat\) of equationally linear Heyting semilattices. Our main objective is to show that, unlike \(\HSLat\), this category is algebraically coherent. We furthermore prove that it is neither locally algebraically cartesian closed nor cosmash associative.
\end{abstract}

\subjclass[2020]{03G25, 06A12, 18E13}
\keywords{Equationally linear Heyting semilattice, arithmetical semi-abelian category, action accessibility, algebraic coherence, normality of unions, Smith is Huq condition, transitivity of normality, Higgins commutator, centraliser, normaliser}

\maketitle{}

\section{Introduction}

In a recent paper by the same authors~\cite{HSLat}, it was shown that the category \(\HSLat\) of Heyting semilattices satisfies several categorical-algebraic properties: \emph{\(n\)-normality of Higgins commutators} as a consequence of arithmeticity, \emph{transitivity of normality} and its corollaries, \emph{algebraic cartesian closedness}, and the condition \SSH{}. It was also shown there that this category is not \emph{algebraically coherent} (and thus is not \emph{locally algebraically cartesian closed}), \emph{action accessible}, or \emph{cosmash associative}, and that it fails to satisfy either \emph{normality of unions} or the condition~\W{}. For all these notions, we refer to~\cite{HSLat} and the literature cited therein.

The main objective of this paper is to study the subvariety \(\ELHSLat\) of \emph{equationally linear} Heyting semilattices, determined by the additional identity
\[
	((x \iimplies y) \iimplies z) \wedge ((y \iimplies x) \iimplies z) = z.
\]
From the results of~\cite{HSLat}, we may immediately transfer several properties to this subvariety, since the original proofs remain valid.

\begin{theorem}
	The category \(\ELHSLat\) is arithmetical, and therefore satisfies \(n\)-normality of Higgins commutators. It also satisfies transitivity of normality, algebraic cartesian closedness, and the condition \SSH{}. However, \(\ELHSLat\) does not satisfy normality of unions, nor is it action accessible. \noproof
\end{theorem}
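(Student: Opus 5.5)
The theorem has two halves. The positive properties transfer because they are proved in~\cite{HSLat} from term conditions (Mal'tsev-type terms) and from properties of congruences and of the Higgins commutator that any subvariety inherits. The negative properties need counterexamples, and the ones from~\cite{HSLat} can be reused once I check they live in \(\ELHSLat\).

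\textbf{Positive properties.} \(\ELHSLat\) is a subvariety of \(\HSLat\) containing the same basic operations, \(\wedge\), \(\iimplies\) and \(1\). Arithmeticity of \(\HSLat\) is witnessed by a Pixley term \(p(x,y,z)\) built from \(\wedge\) and \(\iimplies\), for instance
\[
p(x,y,z)=((x\iimplies y)\iimplies z)\wedge((z\iimplies y)\iimplies x)\wedge(x\iimplies z)\iimplies\ldots,
\]
in the form used in~\cite{HSLat}. Its defining identities hold in \(\HSLat\), hence in \(\ELHSLat\), so \(\ELHSLat\) is arithmetical. Semi-abelianness holds as well, since the ideal-determining terms and the unique constant \(1\) persist. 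Then \(n\)-normality of Higgins commutators follows from arithmeticity exactly as in~\cite{HSLat}.

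For transitivity of normality, algebraic cartesian closedness and \SSH{}, I would recheck that each proof in~\cite{HSLat} only uses identities valid in all Heyting semilattices, together with the description of normal subobjects as filters (and of congruences via filters). Those descriptions restrict verbatim to any subvariety, because subalgebras, products and quotients of objects of \(\ELHSLat\) remain in \(\ELHSLat\), and normal subobjects and centralisers are computed the same way. The one point needing care is algebraic cartesian closedness. It amounts to existence of centralisers, and a centraliser computed in \(\HSLat\) is a subalgebra of an algebra in \(\ELHSLat\), so it lies in \(\ELHSLat\) and is still the centraliser there.

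\textbf{Negative properties.} For the failure of normality of unions and of action accessibility, I would look at the counterexamples of~\cite{HSLat} and verify that they are equationally linear. If they are finite chains or built from chains, this is automatic. Indeed, in a chain either \(x\iimplies y=1\) or \(y\iimplies x=1\), so one conjunct of the defining identity equals \((1\iimplies z)=z\), while the other conjunct is \(\geq z\). More generally, every subalgebra of a product of chains satisfies the identity.

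The main obstacle is the case where some counterexample in~\cite{HSLat} is not a subdirect product of chains. Then I would first try to replace it by a small product of chains (such as \(C_3\times C_2\)) with subobjects whose join is not normal. Failing that, I would pass to a non-centraliser-type witness against action accessibility, checked by direct computation in a finite linear Heyting semilattice.
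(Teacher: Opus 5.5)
Your route is the paper's. The paper gives no argument beyond the remark that the proofs from the \(\HSLat\) paper remain valid in the subvariety.

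\textbf{Positive half and normality of unions.} Your justification here is correct and more explicit than the paper's. Pixley identities persist in subvarieties. Note that the term you display is not actually a term; for instance \(((x\iimplies y)\iimplies z)\wedge((z\iimplies y)\iimplies x)\wedge((x\iimplies z)\iimplies z)\) works. Since \(\ELHSLat\) is a full subcategory closed under products, subalgebras and quotients, the following are all computed exactly as in \(\HSLat\): filters, joins of subobjects, centralisers, and Huq-commutation in the fibres \(\Pt_B\). Hence transitivity of normality, algebraic cartesian closedness and \SSH{} are inherited. For the same reason, an equationally linear witness against normality of unions transfers, because that condition only concerns subobjects and normal subobjects of a single object. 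Your observation that subalgebras of products of chains are equationally linear is the right check there.

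\textbf{Action accessibility: a gap.} Checking that the objects of a counterexample are equationally linear is not enough, because action accessibility is not a condition on the subobjects of one object. A split extension \(E\) is faithful when every split extension \(E'\) with the same kernel in the ambient category admits at most one morphism \(E'\to E\) restricting to the identity on the kernel. Passing to \(\ELHSLat\) shrinks both the class of test extensions \(E'\) and the class of possible targets. So a split extension that is not faithful in \(\HSLat\) may be faithful in \(\ELHSLat\). In particular, an extension of equationally linear algebras admitting no morphism into an \(\HSLat\)-faithful extension could still map into an \(\ELHSLat\)-faithful one.

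To close this you need one of two things:
\begin{enumerate}
\item The \(\HSLat\) argument derives non-accessibility from the failure of a property of subobjects of a single object that every action accessible category satisfies. Then an equationally linear witness suffices, exactly as for normality of unions.
\item Otherwise, you must redo the faithfulness argument inside \(\ELHSLat\) itself.
\end{enumerate}
Your fallback, ``a non-centraliser-type witness checked by direct computation in a finite linear Heyting semilattice'', is too vague to supply either of these.
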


In the present paper, we prove that, unlike \(\HSLat\), the category \(\ELHSLat\) \emph{does} satisfy algebraic coherence, as well as the condition \W{}. On the other hand, we show that \(\ELHSLat\) is \emph{not} locally algebraically cartesian closed, nor is it cosmash associative.

The article is organised as follows. In Section~\ref{section preliminaries}, we recall the necessary background on (equationally linear) Heyting semilattices and review the categorical tools used throughout the paper. Section~\ref{section acc and w} is devoted to the study of centralisers and the condition~\W{} in \(\ELHSLat\). In Section~\ref{section ac}, we prove the main result of the paper, namely that \(\ELHSLat\) is algebraically coherent. In Section~\ref{section lacc}, we show conversely that \(\ELHSLat\) fails to be locally algebraically cartesian closed, and in Section~\ref{section cosmash} that it fails to be cosmash associative. Finally, in Appendix~\ref{Appendix}, we explain how the identities of Lemma~\ref{Equations} are established within the subvariety of equationally linear Heyting semilattices.

\section{Preliminaries}\label{section preliminaries}

In this section, we recall some facts about (equationally linear) Heyting semilattices, as well as the categorical tools needed in this work. For further details we refer to~\cite{HSLat}, where a more extensive presentation is given, along with additional references.

\subsection*{Heyting semilattices}

A~Heyting semilattice \((H,1,\wedge ,\iimplies)\) consists of a set \(H\), a constant \(1\in H\) and two operations \(\wedge\), \(\iimplies \colon H \times H \rightarrow H\) named \defn{meet} and \defn{implication} satisfying the equations
\begin{align*}
	1\wedge x           & = x,                    & x\iimplies x             & =1,                                     \\
	x\wedge x           & =x,                     & x\wedge ( x \iimplies y) & = x \wedge y,                           \\
	x\wedge y           & =y\wedge x,             & y \wedge (x \iimplies y) & =y,                                     \\
	x\wedge (y\wedge z) & = (x \wedge y)\wedge z, & x \iimplies (y \wedge z) & = (x\iimplies y) \wedge (x\iimplies z).
\end{align*}
The identities on the left express that \(H\) is a partially ordered set with a top element~\(1\). The identities on the right express the implication to be right adjoint to meet. In fact, it is equivalent to think about Heyting semilattices as \emph{cartesian closed posets}. One can see that a poset \((H,\leq)\) is a Heyting semilattice precisely when there exists a (necessarily unique) operation \(\iimplies \colon H \times H \rightarrow H\) such that
\begin{align*}
	x \wedge y \leq z \quad \text{if and only if} \quad x \leq y\iimplies z.
\end{align*}
This exhibits the left adjointness of product (\(\wedge\)) to internal hom (\(\iimplies\)). This means that for any poset there is at most one Heyting semilattice structure on it. Of course, morphisms between Heyting semilattices are defined in the expected way and we write \(\HSLat\) for the category of Heyting semilattices. As a variety in the sense of Universal Algebra, it is \emph{semi-abelian}~\cite{Janelidze-Marki-Tholen}. This was proven by Johnstone in~\cite{Johnstone:Heyting}, using the following characterisation due to Bourn and Jan\-elidze~\cite{Bourn-Janelidze}:

\begin{theorem}\label{BouJa Charact}
	A variety is semi-abelian if and only if the corresponding algebraic theory contains exactly one constant \(0\) and for some natural number \(n\) we have
	\begin{itemize}
		\item \(n\) binary operations \(\alpha_i\) such that \(\alpha_i(x,x) =0\) for all \(i=1, \dots , n\),
		\item an \((n+1)\)-ary operation \(\Theta\) such that \(\Theta(\alpha_1(x,y), \dots , \alpha_n(x,y),y)= x\).\noproof
	\end{itemize}
\end{theorem}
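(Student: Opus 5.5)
The plan is to use the standard description of semi-abelian categories: a category is semi-abelian when it is pointed, Barr-exact and Bourn-protomodular and has binary coproducts. Any variety is Barr-exact and cocomplete, so for a variety only two things need checking, pointedness and protomodularity. For a pointed regular category, protomodularity is equivalent to the split short five lemma.

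\emph{Pointedness.} A variety is pointed exactly when its initial algebra \(F(\emptyset)\) is a one-element algebra.
\begin{itemize}
	\item If the theory has exactly one constant \(0\), then \(F(\emptyset)\) consists of \(0\) alone, because every closed term equals \(0\).
	\item Conversely, if \(F(\emptyset)\) is trivial, it must be nonempty, so there is at least one constant. All constants are then identified in \(F(\emptyset)\), so they are equal in the theory.
\end{itemize}

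\emph{Sufficiency (\(\Leftarrow\)).} Take a morphism \(g\colon A\to A'\) of split extensions over \(B\). Here \(f\colon A\to B\) has section \(s\) and kernel \(K\), similarly \(f',s',K'\), and \(g\) restricts to an isomorphism \(K\to K'\). I want to show \(g\) is bijective.
\begin{itemize}
	\item For \(x\in A\), put \(k_i=\alpha_i(x,sf(x))\). Then \(f(k_i)=\alpha_i(f(x),f(x))=0\), so each \(k_i\in K\), and \(x=\Theta(k_1,\dots,k_n,sf(x))\).
	\item Surjectivity: for \(x'\in A'\), decompose \(x'\) in the same way. Each of its kernel components has a preimage in \(K\), and \(s'f'(x')=g(sf'(x'))\). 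Applying \(\Theta\) to these preimages gives a preimage of \(x'\).
	\item Injectivity: suppose \(g(x)=g(y)\). Then \(f(x)=f'g(x)=f'g(y)=f(y)\). Since \(g\circ s=s'\), we get \(g(\alpha_i(x,sf(x)))=g(\alpha_i(y,sf(y)))\). Injectivity of \(g\) on \(K\) gives \(\alpha_i(x,sf(x))=\alpha_i(y,sf(y))\) for all \(i\), and applying \(\Theta\) yields \(x=y\).
\end{itemize}

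\emph{Necessity (\(\Rightarrow\)).} Consider the split epimorphism \(p\colon F(x,y)\to F(y)\) with \(x,y\mapsto y\), split by the inclusion \(\iota\colon F(y)\to F(x,y)\), and let \(K\) be its kernel. In a pointed protomodular category, the kernel and the section of a split epimorphism are jointly strongly epimorphic. In a variety this means \(F(x,y)\) is generated as an algebra by \(K\cup\iota F(y)\). Hence \(x\) is a term in finitely many elements \(k_1,\dots,k_n\in K\) and in elements of \(\iota F(y)\); the latter are terms in \(y\), so they can be absorbed into a term in \(y\). This gives an \((n+1)\)-ary term \(\Theta\) with \(x=\Theta(k_1,\dots,k_n,y)\). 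Each \(k_i\) is a binary term \(\alpha_i(x,y)\), and \(p(k_i)=0\) says exactly that \(\alpha_i(y,y)=0\). These are the required identities.

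The main obstacle is justifying the joint strong epimorphicity used in the last step. I would derive it from the split short five lemma applied to the inclusion of the subalgebra \(S\) generated by \(K\cup\iota F(y)\). This \(S\) is itself a split extension of \(F(y)\) with the same kernel \(K\), so its inclusion into \(F(x,y)\) is an isomorphism, giving \(S=F(x,y)\).
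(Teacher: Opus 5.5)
The paper gives no proof of this theorem; it is quoted from Bourn and Janelidze and stated without proof. Your argument is correct and follows the standard lines of their characterisation.

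\begin{itemize}
\item For necessity you use the free split epimorphism \(F(x,y)\to F(y)\), together with the fact that kernel and section jointly generate. You rightly obtain that fact from the split short five lemma applied to the subalgebra they generate.
\item For sufficiency you use the decomposition \(a=\Theta(\alpha_1(a,sf(a)),\dots,\alpha_n(a,sf(a)),sf(a))\). With it you check directly that a morphism in \(\Pt_B\) which is an isomorphism on kernels is bijective. This is exactly what it means for the kernel functors to reflect isomorphisms.
\end{itemize}

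One point deserves to be made explicit: what ``exactly one constant'' means. It must mean that every nullary term of the theory is provably equal to \(0\), i.e.\ that \(F(\emptyset)\) is a singleton. Under that reading, your sentence ``every closed term equals \(0\)'' is simply the hypothesis restated, not something derived from it.

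Having a single constant symbol in the signature would not be enough. Take groups with an extra unary operation \(u\) subject to no axioms. This variety has the required terms \(\alpha(x,y)=xy^{-1}\) and \(\Theta(a,y)=ay\). Yet \(u(0)\neq 0\) in its initial algebra, as the map to \(\mathbb{Z}\) with \(u\) constantly \(1\) shows. So that variety is not pointed.
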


Johnstone showed that taking the constant \(0\) to be \(1\) and the operations
\begin{align}\label{protomodular terms for HSL}
	\alpha_1(x,y)= x \iimplies y, \;
	\alpha_2(x,y)= ((x\iimplies y)\iimplies y)\iimplies x, \;
	\Theta(x,y,z) = (x \iimplies z) \wedge y,
\end{align}
the conditions of Theorem~\ref{BouJa Charact} are satisfied.

We say that a subobject is \defn{normal} when it is the kernel of some morphism.
To avoid confusion with the intrinsic order of a Heyting semilattice, we adopt the following conventions.
For an object \(H\), the order on its class \(\Sub(H)\) of subobjects is denoted by ``\(\oleq\)'',
with meets and joins written as ``\(\omeet\)'' and ``\(\ojoin\)'', respectively.
Each monomorphism \(x \colon X \rightarrowtail H\) determines a subobject, which we identify with~\(x\) or, equivalently, write as \(X \oleq H\).
Finally, we write \(K \normal H\) when a subobject \(K \oleq H \) is normal.

Before characterising normal subobjects in \(\HSLat\), we recall the notion of a filter.

\begin{definition}
	A non-empty subset \(K\) of a Heyting semilattice \(H\) is a \defn{filter} of~\(H\) when:
	\begin{enumerate}
		\item if \(k, k' \in K\), then \(k \wedge k' \in K\);
		\item if \(k \in K\) and \(h \in H\) with \(h \geq k\), then \(h \in K\).
	\end{enumerate}
\end{definition}

Nemitz proved in~\cite{Nemitz} that:

\begin{lemma}\label{Lemma Normal is Filter}
	A non-empty subset \(K\) of a Heyting semilattice \(H\) is the kernel of some morphism \(f \colon H \to H'\) of Heyting semilattices if and only if \(K\) is a filter. In categorical terminology, filters correspond to \defn{normal subobjects} in the category \(\HSLat\). \noproof
\end{lemma}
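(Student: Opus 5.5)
We prove both directions of Lemma~\ref{Lemma Normal is Filter} directly, using the order-theoretic description of Heyting semilattices: \(x \leq y\) if and only if \(x \wedge y = x\), if and only if \(x \iimplies y = 1\). Throughout, the kernel of \(f \colon H \to H'\) means the preimage \(f^{-1}(1)\), since \(1\) is the unique constant.

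\emph{Kernels are filters.} Let \(K = f^{-1}(1)\). It is non-empty because \(f(1)=1\). If \(k, k' \in K\), then \(f(k \wedge k') = f(k) \wedge f(k') = 1 \wedge 1 = 1\), so \(K\) is closed under meets. If \(h \geq k\) with \(k \in K\), then \(h \wedge k = k\). Hence \(f(h) \wedge 1 = f(h) \wedge f(k) = f(k) = 1\), so \(f(h) = 1\) and \(K\) is upward closed.

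\emph{Filters are kernels.} Given a filter \(K\), define a relation on \(H\) by
\[
	x \sim y \quad\text{if and only if}\quad \exists k \in K \text{ with } x \wedge k = y \wedge k.
\]
We check three things.
\begin{itemize}
	\item \(\sim\) is an equivalence relation. Reflexivity uses non-emptiness of \(K\), and symmetry is clear. For transitivity, given witnesses \(k\) and \(k'\), the meet \(k \wedge k' \in K\) is a witness for the composite.
	\item \(\sim\) is compatible with \(\wedge\). This is immediate from associativity and commutativity of meet.
	\item \(\sim\) is compatible with \(\iimplies\). This is the only step needing the Heyting structure, so it is the main point. We use the identity \(k \wedge (x \iimplies y) = k \wedge ((k \wedge x) \iimplies (k \wedge y))\), which follows from the adjunction: for any \(z \leq k\), \(z \leq x \iimplies y\) iff \(z \wedge x \leq y\) iff \(z \wedge k \wedge x \leq k \wedge y\). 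Consequently, if \(x \wedge k = x' \wedge k\) and \(y \wedge k = y' \wedge k\) (using one common witness, the meet of the two given witnesses), then \(k \wedge (x \iimplies y) = k \wedge (x' \iimplies y')\). Compatibility in each variable separately then suffices.
\end{itemize}

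Thus \(\sim\) is a congruence, and the quotient \(q \colon H \to H/{\sim}\) is a morphism of Heyting semilattices, since the quotient of an algebra in a variety lies in the variety. It remains to show \(q^{-1}(1) = K\).

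\begin{itemize}
	\item If \(x \sim 1\), there is \(k \in K\) with \(x \wedge k = k\), so \(x \geq k\). Upward closure then gives \(x \in K\).
	\item Conversely, if \(x \in K\), then \(x \wedge x = 1 \wedge x\), so \(x \sim 1\) with witness \(x\) itself.
\end{itemize}

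Hence \(K\) is the kernel of \(q\). The categorical reformulation follows, since kernels in this pointed variety are exactly these preimages of \(1\), considered as subobjects.
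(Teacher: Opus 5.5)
Your proof is correct and complete. The paper does not prove this lemma: it is quoted from Nemitz and stated without proof, so there is no internal argument to compare with.

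Your route is the standard direct one. The congruence you define, \(x\sim y\) iff \(x\wedge k=y\wedge k\) for some \(k\in K\), coincides with the usual description \((x\iimplies y)\wedge(y\iimplies x)\in K\). In one direction take \(k=(x\iimplies y)\wedge(y\iimplies x)\). In the other, note that \(x\wedge k=y\wedge k\) forces \(k\leq (x\iimplies y)\wedge(y\iimplies x)\) and use up-closure. Your key step, compatibility with \(\iimplies\) via \(k\wedge(x\iimplies y)=k\wedge((k\wedge x)\iimplies(k\wedge y))\), is verified correctly. So is the use of a single common witness \(k\wedge k'\).

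Your self-contained argument also yields something the bare citation leaves implicit. The quotient \(H/{\sim}\) stays in any subvariety containing \(H\), so the same proof shows that normal subobjects in \(\ELHSLat\) are exactly the filters as well. The paper relies on this tacitly, for instance in Proposition~\ref{proposition centraliser}.
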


As observed in~\cite{HSLat}, interpreting normal monomorphisms as filters makes it clear that in \(\HSLat\), the composite of two normal monomorphisms is again normal. This motivated the study of what we called \emph{transitivity of normality} in that same paper.

\subsection*{Equationally linear Heyting semilattices}

Following~\cite{MR0245486}, the term
\[
	x \psj y \coloneq ((x \iimplies y) \iimplies y) \wedge ((y \iimplies x) \iimplies x)
\]
is called the \defn{pseudo-join} of \(x\) and \(y\). It satisfies the following properties:
\begin{enumerate}
	\item \(x \leq x \psj y\) and \(y \leq x \psj y\);
	\item \(x \leq y\) if and only if \(x \psj y = y\);
	\item \(x \psj x = x\), and \(x \psj y = y \psj x\);
	\item \(x \psj (x \wedge y) = x = x \wedge (x \psj y)\).
\end{enumerate}
A perhaps surprising, yet easy to verify identity is the distributivity of implication over the pseudo-join:
\[
	x \iimplies (y \psj z) = (x \iimplies y) \psj (x \iimplies z).
\]

It is worth noting that Heyting semilattices need not have their pseudo-join coincide with the join, as illustrated below.

\begin{example}\label{example non semi-boolean}
	Let \(H\) be the Heyting semilattice with Hasse diagram
	\[
		\xymatrix@!=0ex{
		& 1 &\\
		& z\ar@{-}[u] &\\
		x\ar@{-}[ur] & & y \ar@{-}[ul]\\
		& 0\ar@{-}[ul]\ar@{-}[ur] &
		}
	\]
	Then the pseudo-join \(\psj\) is not the join, since \(x \psj y = 1\).
\end{example}

Following Nemitz, a Heyting semilattice will be called \defn{semi-Boolean} when its pseudo-join coincides with its join. In~\cite{MR0245486}, he established the following characterisation.

\begin{proposition}\label{proposition equivalences semi-boolean}
	For a Heyting semilattice \(H\), the following conditions are equivalent:
	\begin{enumerate}
		\item \(H\) is semi-Boolean;
		\item \(x \psj (y \psj z) = (x \psj y) \psj z\);
		\item \(x \leq y\) implies \((z \psj x) \leq (z \psj y)\);
		\item \(\psj\) distributes over \(\wedge\);
		\item \(\wedge\) distributes over \(\psj\);
		\item \((x \psj y) \iimplies z = (x \iimplies z) \wedge (y \iimplies z)\).\noproof
	\end{enumerate}
\end{proposition}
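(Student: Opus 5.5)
The plan is to take (1) as the hub and prove \((1)\Leftrightarrow(k)\) for each \(k\), using only the Heyting semilattice axioms and the listed properties of \(\psj\). The key preliminary fact is the following. For any \(x,y\), the element \(x\psj y\) is an upper bound of \(x\) and \(y\). Moreover, for every upper bound \(u\) of \(x\) and \(y\) one has \(u\leq x\psj y\) if and only if \(x\psj y = u\). Hence \(\psj\) is the join exactly when it is the least upper bound.

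A second useful observation is that if the join \(x\vee y\) exists, then \(x\psj y\) is still only an upper bound. Indeed \((x\iimplies y)\iimplies y \geq x\), and similarly for the other factor. So (1) amounts to: every upper bound \(u\) of \(x,y\) satisfies \(x\psj y\leq u\).

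\((1)\Rightarrow\) the rest. Assume \(\psj\) is the join. Then (2), (3) and (6) are standard lattice facts. For (6) we use that \(\iimplies\) turns joins in the first variable into meets: this follows from the adjunction, since \(w\leq (x\vee y)\iimplies z\) iff \(w\wedge(x\vee y)\leq z\). To rewrite \(w\wedge(x\vee y)\) as \((w\wedge x)\vee(w\wedge y)\) we need distributivity. This comes from the fact that \(w\wedge -\) is a left adjoint, so it preserves all existing joins. The same argument gives (5), and (4) follows from (5) by the usual lattice identity.

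The converse directions. For each of (2)–(6), I show that if \(u\geq x\) and \(u\geq y\), then \(x\psj y\leq u\).

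From (3): since \(x\leq u\) we get \(x\psj y\leq u\psj y\). Since \(y\leq u\), property (2) of \(\psj\) gives \(u\psj y=u\).

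From (2): write \(u=u\psj x = u\psj y\). Then \(u\psj(x\psj y)=(u\psj x)\psj y=u\psj y=u\), so \(x\psj y\leq u\).

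From (6): \((x\psj y)\iimplies u=(x\iimplies u)\wedge(y\iimplies u)=1\wedge 1=1\), so \(x\psj y\leq u\).

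From (5): \(u\wedge(x\psj y)=(u\wedge x)\psj(u\wedge y)=x\psj y\), so \(x\psj y\leq u\).

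From (4): \(u\psj(x\wedge y)\) expands to \((u\psj x)\wedge(u\psj y)=u\wedge u=u\). This does not immediately bound \(x\psj y\), so here I would instead first derive (3) from (4). If \(x\leq y\), then \(z\psj x = z\psj(x\wedge y)=(z\psj x)\wedge(z\psj y)\leq z\psj y\). This reduces to the case already handled.

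The main obstacle I anticipate is the direction \((1)\Rightarrow(5)\)/(4). One has to check carefully that the required distributivity holds in a semi-Boolean semilattice. In particular, \(\wedge\) preserving binary joins must be argued through the adjunction \(w\wedge-\dashv w\iimplies-\); it is not automatic for a semilattice. A further point to watch is that the identities (2)–(6) quantify over all elements, so the converse arguments above are legitimate, since they instantiate those identities at particular \(u,x,y\).
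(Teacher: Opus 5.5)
Your proof is correct. The paper gives no proof to compare it with: the proposition is quoted from Nemitz and stated without proof.

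Your route takes (1) as a hub and first reformulates it as ``$x \psj y \leq u$ for every common upper bound $u$ of $x$ and $y$''. This is valid because $x \psj y$ is always an upper bound.

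\begin{itemize}
\item \emph{Converses.} Each converse is then a one-line instantiation of the identity in question at $u$, $x$, $y$. For (2), (3), (5) and (6) this uses only the listed properties of $\psj$ and the fact that $a \iimplies b = 1$ iff $a \leq b$. You route (4) through (3), which works.
\item \emph{Forward direction.} You correctly avoid assuming distributivity. You obtain it from the fact that $w \wedge -$ is left adjoint to $w \iimplies -$, so it preserves every join that exists. Under (1), all binary joins exist and equal $\psj$.
\end{itemize}

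Compared with the citation, your version buys a short, self-contained and elementary argument. The citation only buys brevity.

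One correction is needed. The ``moreover'' claim in your first paragraph is false: it is not true that an upper bound $u$ of $x,y$ with $u \leq x \psj y$ must equal $x \psj y$. Take the Heyting semilattice of Example~\ref{example non semi-boolean}. There $z$ is an upper bound of $x$ and $y$ and $z \leq 1 = x \psj y$, yet $x \psj y \neq z$. Nothing later depends on this claim, since the reformulation you actually use is the one in your second paragraph. You should simply delete it.
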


\begin{example}
	Any linearly ordered set with a top element can be endowed with a semi-Boolean Heyting semilattice structure. Indeed, given a linearly ordered set \(S\) with top element \(1\), the meet of two elements is their minimum and the join is their maximum. The implication is defined by
	\[
		x \iimplies y =
		\begin{cases}
			1 & \text{if } x \leq y, \\
			y & \text{otherwise},
		\end{cases}
	\]
	which turns \(S\) into a semi-Boolean Heyting semilattice.
\end{example}

In~\cite[Theorem~3.3]{NeWh1}, Nemitz--Whaley proved that the semi-Boolean Heyting semilattices form the subvariety of \(\HSLat\) generated by the linearly ordered ones; that is, the smallest class of Heyting semilattices containing all linearly ordered Heyting semilattices and closed under products, subobjects, and quotients. In~\cite{Kohler}, Köhler characterised this subvariety by the identity
\begin{align}\label{kohler's identity}
	[((x \iimplies y) \iimplies z) \wedge ((y \iimplies x) \iimplies z)] \iimplies z = 1.
\end{align}

In the unpublished manuscript~\cite{FreydHeyting}, Freyd introduced the notion of an \defn{equationally linear} Heyting semilattice, defined as one satisfying:
\[
	((x \iimplies y) \iimplies z) \wedge ((y \iimplies x) \iimplies z) = z,
\]
which is easily seen to be equivalent to Köhler's identity~(\ref{kohler's identity}). In what follows, we adopt Freyd's terminology and denote the corresponding variety by \(\ELHSLat\). Moreover, since in this variety the join and the pseudo-join coincide, we write \(\vee\) instead of \(\psj\) when working in \(\ELHSLat\).

\subsection*{Categorical background}

As mentioned above, the category of (equationally linear) Heyting semilattices is semi-abelian. For the sake of simplicity, in what follows, we assume this condition always holds, although in some cases this assumption can be weakened. The reader is not required to be familiar with semi-abelian categories. This notion was introduced in~\cite{Janelidze-Marki-Tholen}, and we recommend the reference book~\cite{Borceux-Bourn} as a comprehensive guide to the subject.

In the context of a semi-abelian category, there is a notion of commutator for subobjects, essentially due to Huq~\cite{Huq,Borceux-Bourn}.

\begin{definition}
	Given two subobjects \(x \colon X \rightarrowtail H\) and \(y\colon Y \rightarrowtail H\), a \defn{cooperator} \(\varphi\) on the pair \((X,Y)\) with respect to \(H\) is a morphism \(\varphi \colon X\times Y \rightarrow H\) such that the following diagram commutes.
	\begin{center}
		\begin{tikzcd}[row sep = large]
			X \arrow[r, "{(1_X,0)}"] \arrow[rd, "x"', tail] & X \times Y \arrow[d, "\varphi" description] & Y \arrow[l, "{(0,1_Y)}"'] \arrow[ld, "y", tail] \\
			& H
		\end{tikzcd}
	\end{center}
	When such a cooperator exists, we say that \(X\) and \(Y\) \defn{cooperate} or \defn{Huq-commute} in \(H\).
\end{definition}

One can check that the previous definition behaves as expected in the category of groups. Indeed, two subgroups \(X\) and \(Y\) cooperate if and only if \(xy = yx\) for all \(x \in X\) and \(y \in Y\). In the case of Heyting semilattices, we have the following characterisation:

\begin{proposition}[\cite{HSLat}]\label{proposition commute in hslat}
	Let \(X\), \(Y \oleq H\) be two subobjects of a Heyting semilattice~\(H\). The following conditions are equivalent:
	\begin{tfae}
		\item \(X\) and \(Y\) Huq-commute in \(H\);
		\item for all \(x \in X\) and \(y \in Y\),
		\begin{align*}
			(x \iimplies y) = y \qquad \text{and} \qquad (y \iimplies x) = x;
		\end{align*}
		\item for all \(x \in X\) and \(y \in Y\), \(x \psj y = 1\).\noproof
	\end{tfae}
\end{proposition}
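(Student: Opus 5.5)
We prove the cycle (i)\(\Rightarrow\)(ii)\(\Rightarrow\)(iii)\(\Rightarrow\)(i). Throughout we use the adjunction \(a\wedge b\leq c \iff a\leq b\iimplies c\). From it we get the elementary facts \(b\leq a\iimplies b\), \(a\iimplies b=1\iff a\leq b\), and \((a\iimplies b)\iimplies b\geq a\). Recall also that a morphism of Heyting semilattices preserves \(1\), \(\wedge\) and \(\iimplies\), and that the zero object is \(\{1\}\). Hence the maps \((1_X,0)\) and \((0,1_Y)\) are \(x\mapsto(x,1)\) and \(y\mapsto(1,y)\).

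\emph{(i)\(\Rightarrow\)(ii).} Suppose \(\varphi\colon X\times Y\to H\) is a cooperator, so that \(\varphi(x,1)=x\) and \(\varphi(1,y)=y\). We compute \(\varphi\) on the pair \((x,1)\iimplies(1,y)=(x\iimplies 1,\,1\iimplies y)=(1,y)\). Applying \(\varphi\) and using that it preserves implication gives \(x\iimplies y=\varphi(1,y)=y\). The symmetric computation with \((1,y)\iimplies(x,1)=(x,1)\) gives \(y\iimplies x=x\).

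\emph{(ii)\(\Rightarrow\)(iii).} Substitute (ii) into the definition of the pseudo-join:
\[
x\psj y=((x\iimplies y)\iimplies y)\wedge((y\iimplies x)\iimplies x)=(y\iimplies y)\wedge(x\iimplies x)=1.
\]

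\emph{(iii)\(\Rightarrow\)(i).} This is the substantial direction. The first step is to recover (ii) from (iii). Since \(x\psj y=1\), we have \((x\iimplies y)\iimplies y=1\), so \(x\iimplies y\leq y\). Combined with \(y\leq x\iimplies y\), this gives \(x\iimplies y=y\), and symmetrically \(y\iimplies x=x\).

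The second step is to define the candidate cooperator \(\varphi(x,y)=x\wedge y\). Clearly \(\varphi(x,1)=x\) and \(\varphi(1,y)=y\), and \(\varphi\) preserves \(1\) and \(\wedge\). The main obstacle is to check that \(\varphi\) preserves implication, that is,
\[
(x\wedge y)\iimplies(x'\wedge y')=(x\iimplies x')\wedge(y\iimplies y').
\]

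The third step verifies this identity. Rewrite the left-hand side as
\[
(x\iimplies(y\iimplies x'))\wedge(y\iimplies(x\iimplies y')).
\]
Because \(X\) and \(Y\) are subalgebras, \(x\iimplies x'\in X\) and \(y\iimplies y'\in Y\). Note that \(y\iimplies x'=x'\) by (ii), and \(x\iimplies y'=y'\), also by (ii). Hence the left-hand side equals \((x\iimplies x')\wedge(y\iimplies y')\), as required.

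If the computation in the third step turns out to need more care, the fallback is to use the inequality \(\leq\), which follows from monotonicity, together with the adjunction for the reverse inequality.
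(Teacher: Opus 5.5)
Your proof is correct and complete. For (i)\(\Rightarrow\)(ii) you evaluate the cooperator on \((x,1)\iimplies(1,y)=(1,y)\); (ii)\(\Leftrightarrow\)(iii) follows directly from the definition of \(\psj\); and for (iii)\(\Rightarrow\)(i) the meet map \(\varphi(x,y)=x\wedge y\) preserves \(\iimplies\) by currying together with \(y\iimplies x'=x'\) and \(x\iimplies y'=y'\).

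The paper itself only imports this statement from~\cite{HSLat} and gives no proof, so there is nothing to compare against; your argument is the natural direct one. Indeed the cooperator is forced to be the meet, since \((x,y)=(x,1)\wedge(1,y)\). Two parts of your write-up can be dropped: the \emph{fallback} sentence, and the remark that \(x\iimplies x'\in X\) and \(y\iimplies y'\in Y\), since neither is used.
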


\begin{definition}
	The \defn{Huq-commutator} \([X,Y]^{\Huq}\) of \(X\) and \(Y\) in \(H\) is the smallest normal subobject \(N\) of \(H\) such that \(X\) and~\(Y\) cooperate in \({H}/{N}\).
\end{definition}
Given two subobjects \(X\) and \(Y\) of \(H\), we construct the diagram
\begin{center}

	\begin{tikzcd}[row sep = large]
		& X \arrow[dl,"{(1_X,0)}"'] \arrow[d, dotted] \arrow[dr,tail,"x"] & \\

		X\times Y \arrow[r,dotted] & Q & H \arrow[l, dotted,"q"']\\
		& Y \arrow[u,dotted] \arrow[ur,tail,"y"'] \arrow[ul,"{(0,1_Y)}"]
	\end{tikzcd}
\end{center}
where \(Q\) is the colimit of the outer square.

This object \([X,Y]^{\Huq}\) is the kernel of \(q\colon H \rightarrow Q\). The following helps to understand the previous definitions.

\begin{proposition}[\cite{Borceux-Bourn}]
	Given two subobjects \(X\) and \(Y\) in \(H\), they admit a cooperator if and only if \([X,Y]^{\Huq}=0\).\qed
\end{proposition}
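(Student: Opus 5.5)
We prove the two implications separately, using only the universal property of the colimit \(Q\) constructed above and the description of \([X,Y]^{\Huq}\) as the kernel of \(q\colon H\to Q\). The plan is to show that a cooperator \(\varphi\) exists exactly when \(q\) is a monomorphism, and then to use that in a semi-abelian category a morphism is monic precisely when its kernel is \(0\).

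Suppose first that \(\varphi\colon X\times Y\to H\) is a cooperator. The pair \((\varphi, 1_H)\) forms a cocone on the outer square, because \(\varphi\circ(1_X,0)=x=1_H\circ x\) and \(\varphi\circ(0,1_Y)=y=1_H\circ y\). The universal property of \(Q\) then gives a unique \(r\colon Q\to H\) with \(r\circ q=1_H\) (and \(r\) composed with the other colimit leg equal to \(\varphi\)). Hence \(q\) is a split monomorphism, and so \([X,Y]^{\Huq}=\Ker(q)=0\).

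Conversely, suppose \([X,Y]^{\Huq}=0\). Let \(\psi\colon X\times Y\to Q\) be the colimit leg, so that \(\psi\circ(1_X,0)=q\circ x\) and \(\psi\circ(0,1_Y)=q\circ y\). The key step is to show that \(q\) is an isomorphism, so that \(\varphi\coloneq q^{-1}\circ\psi\) satisfies \(\varphi\circ(1_X,0)=x\) and \(\varphi\circ(0,1_Y)=y\) and is the required cooperator.
\begin{itemize}
	\item \(q\) is monic. A semi-abelian category is protomodular and pointed, and in that setting a morphism with zero kernel is a monomorphism.
	\item \(q\) is a regular epimorphism. Since \(x\) and \(y\) are monic, \((1_X,0)\) and \((0,1_Y)\) factor through \(x\) and \(y\) respectively. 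Thus \(Q\) is the pushout of \(X\times Y\leftarrow X+Y\to H\) along the comparison map \(X+Y\to X\times Y\), which is a regular epimorphism in a semi-abelian category. Regular epimorphisms are stable under pushout, so \(q\) is a regular epimorphism.
\end{itemize}
A morphism that is both a monomorphism and a regular epimorphism is an isomorphism, which completes the argument.

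The main obstacle is the second item: identifying \(Q\) with the pushout along \(X+Y\to X\times Y\) and checking that this comparison map is a regular epimorphism. This holds in any regular unital category, and in particular in any semi-abelian one. With that standard fact in hand, both directions are routine diagram chases.
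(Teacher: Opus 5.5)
Your proof is correct. It is the standard argument behind the citation; the paper gives no proof of its own and only refers to Borceux--Bourn.

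In the forward direction, the cocone \((\varphi,1_H)\) makes \(q\) a split monomorphism. In the converse, zero kernel gives monicity by protomodularity, and \(q\) is a regular epimorphism as a pushout of the regular epimorphism \(\Sigma_{X,Y}\). So \(q\) is invertible and \(q^{-1}\psi\) is a cooperator.

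With the paper's literal definition of \([X,Y]^{\Huq}\) as the smallest normal \(N\) such that \(X\) and \(Y\) cooperate in \(H/N\), the statement is essentially immediate by taking \(N=0\). The real content is that this smallest \(N\) exists and equals \(\Ker(q)\). Your mono/regular-epi analysis of \(q\) is exactly what makes that identification work, which is why your proof starts from \(\Ker(q)\).

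One justification needs fixing. Identifying \(Q\) with the pushout of \(\Sigma_{X,Y}\) along \(\langle x,y\rangle\) has nothing to do with \(x\) and \(y\) being monic, and \((1_X,0)\) does not factor through \(x\) in any useful sense. The correct reason is the universal property of the coproduct. For \(\psi'\colon X\times Y\to Q'\) and \(q'\colon H\to Q'\), the equation \(\psi'\Sigma_{X,Y}=q'\langle x,y\rangle\) holds if and only if it holds after precomposing with both coproduct injections. That is, it holds if and only if \(\psi'(1_X,0)=q'x\) and \(\psi'(0,1_Y)=q'y\). So the two diagrams have the same cocones and hence the same colimit. With that sentence replaced, the argument is complete.
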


The \textit{Higgins commutator} provides another approach to commutators of subobjects in the semi-abelian context. Originally defined for \(\Omega\)-groups by Higgins~\cite{Higgins} and later extended to a categorical setting by Mantovani and Metere~\cite{MM-NC}, it adopts a more universal--algebraic viewpoint.

We start by considering the canonical morphism
\[
	\Sigma_{X,Y}\coloneq
	\begin{psmallmatrix}
		1_X & 0 \\
		0 & 1_Y
	\end{psmallmatrix}
	=(\langle1_X,0\rangle,\langle0,1_Y\rangle)\colon X+Y\to X\times Y
\]
for any two objects \(X\) and \(Y\).
The \defn{binary cosmash product} \(X\diamond Y\) is defined as the kernel of \(\Sigma_{X,Y}\).
In the category of groups, \(X\diamond Y\) is generated by the \emph{words} in the coproduct \(X+Y\) of the form \(xyx^{-1}y^{-1}\) with \(x\in X\) and \(y\in Y\).

\begin{definition}[\cite{MM-NC}]\label{definition higgins commutator}
	Let \(x\colon X\rightarrowtail H\) and \(y\colon Y\rightarrowtail H\) be subobjects.
	They induce a morphism \(\langle x,y\rangle\colon X+Y\to H\).
	The \defn{Higgins commutator} \([X,Y]\) of \(X\) and \(Y\) in~\(H\) is defined as the regular image of \(X\diamond Y\) under \(\langle x,y\rangle\).

	\begin{center}
		\begin{tikzcd}[column sep = large]
			0 \arrow[r]
			& X\diamond Y \arrow[r, \Dtail] \arrow[d, \Dhead, dotted]
			& X+Y \arrow[r, "\Sigma_{X,Y}"] \arrow[d, "{\langle x, y\rangle}"]
			& X\times Y \\
			& {[X,Y]} \arrow[r, tail, dotted]
			& H
		\end{tikzcd}
	\end{center}
\end{definition}

A key relationship, proved in~\cite{MM-NC}, is that for any two subobjects \(X\), \(Y\) of an object \(H\), the Huq commutator \([X,Y]^{\Huq}\) is the normal closure of the Higgins commutator \([X,Y]\).
This shows that the two notions do differ. Nevertheless, one always has
\[
	[X,Y]^{\Huq}=0
	\quad\Leftrightarrow\quad
	[X,Y]=0.
\]

Since both \( \HSLat \) and \( \ELHSLat \) are \emph{arithmetical} categories (see~\cite{Pedicchio2} and \cite{Pixley}), we obtain the following as a direct corollary of Theorem~3.7 in~\cite{HSLat}:

\begin{proposition}\label{proposition commutator is intersection}
	Let \(X\), \(Y \oleq H\) be two subobjects of an (equationally linear) Heyting semilattice \(H\).
	Then the Higgins commutator of \(X\) and \(Y\) in \(H\) is given by
	\[
		[X,Y] = X^{Y} \omeet Y^{X},
	\]
	where \(X^{Y}\) denotes the normal closure of \(X\) inside \(X \ojoin Y\) (and similarly for \(Y^{X}\)).\noproof
\end{proposition}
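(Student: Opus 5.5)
The statement is essentially an instance of a general fact about arithmetical semi-abelian categories, namely Theorem~3.7 of~\cite{HSLat}, so the plan is to check that its hypotheses hold for \(\ELHSLat\) and then specialise. First, \(\ELHSLat\) is a subvariety of \(\HSLat\), so it inherits the semi-abelian structure. Indeed, the terms~\eqref{protomodular terms for HSL} still satisfy the Bourn--Janelidze conditions of Theorem~\ref{BouJa Charact}. It is also arithmetical. This is the first statement of the introductory theorem, and it also follows from the Pixley-type term of \(\HSLat\) surviving in any subvariety: congruence permutability and distributivity are witnessed by terms, so they pass to subvarieties.

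Next I recall the shape of the general argument, in case one wants it self-contained. Replace \(H\) by \(X\ojoin Y\); the Higgins commutator is computed there, since it is the image of \(X\diamond Y\) under \(\langle x,y\rangle\), which factors through \(X\ojoin Y\). There are then two inclusions.

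For \([X,Y]\oleq X^Y\omeet Y^X\): composing with the quotient \(X\ojoin Y\to (X\ojoin Y)/X^Y\) kills \(X\). Hence \(\langle x,y\rangle\) restricted to \(X\diamond Y\) becomes trivial, because \(X\diamond Y\) is the kernel of \(\Sigma_{X,Y}\) and the map now factors through the projection onto \(Y\). So \([X,Y]\oleq X^Y\), and symmetrically \([X,Y]\oleq Y^X\).

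For the reverse inclusion I would use arithmeticity. In a congruence-distributive (in particular arithmetical) semi-abelian category, Higgins commutators of normal subobjects are intersections, \([K,L]=K\omeet L\), and the Higgins commutator is normal in the join. The key step is the identity \([X,Y]=[X^Y,Y^X]\) computed in \(X\ojoin Y\). This step is the main obstacle. I would try to derive it from the standard formula \([X^Y,Y^X]\oleq [X,Y]\ojoin[X,Y,Y]\ojoin\cdots\), together with \(n\)-normality of Higgins commutators (already known for \(\ELHSLat\)), which lets the higher commutators collapse into \([X,Y]\).

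Once these are established, \([X,Y]=X^Y\omeet Y^X\) follows. Since all this is exactly Theorem~3.7 of~\cite{HSLat} for arithmetical categories, in the paper it suffices to cite it after noting arithmeticity of \(\ELHSLat\).
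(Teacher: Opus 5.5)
Your proposal is correct and follows the paper, which likewise gives no argument beyond observing that \(\ELHSLat\) is arithmetical and invoking Theorem~3.7 of~\cite{HSLat}. Only your optional self-contained sketch is shaky. Collapsing the higher commutators into \([X,Y]\) needs \([X,Y]\) to be normal in \(X\ojoin Y\) for arbitrary \(X\), \(Y\), and in an arithmetical category that is equivalent to the formula being proved. Moreover, the \(n\)-normality you invoke is itself imported from~\cite{HSLat} as a consequence of arithmeticity. A direct route instead notes three things. First, \(X\diamond Y=X^{X+Y}\omeet Y^{X+Y}\) in \(X+Y\). Second, the regular epimorphism \(\langle x,y\rangle\colon X+Y\to X\ojoin Y\) sends these normal closures onto \(X^{Y}\) and \(Y^{X}\). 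Third, by congruence distributivity, such regular images preserve binary intersections of normal subobjects.
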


We now recall three central notions in semi-abelian categories that will appear throughout this paper: \emph{split extensions}, \emph{points}, and \emph{kernel functors}.

Let \(X\) and \(B\) be objects in a semi-abelian category \(\C\).
A \defn{split extension} of \(B\) by \(X\) is a diagram
\begin{equation}\label{eq:split_ext}
	\begin{tikzcd}
		X \arrow[r, "\kappa"]
		& A \arrow[r, shift left, "\alpha"]
		& B \arrow[l, shift left, "\beta"]
	\end{tikzcd}
\end{equation}
in \(\C\) such that \(\alpha \circ \beta = 1_B\) and \((X,\kappa)\) is the kernel of \(\alpha\).
Morphisms of split extensions are morphisms of extensions commuting with the given sections.

We write \(\Pt_B(\C)\) for the category of \defn{points} over \(B\), whose objects are triples \((A,\alpha,\beta)\) with \(\alpha\colon A \to B\) a split epimorphism and \(\beta\) a chosen section. Morphisms are defined in the evident way.
Given a morphism \(p\colon E \to B\) in \(\C\), the \defn{change-of-base functor} along \(p\) is the functor
\[
	p^* \colon \Pt_B(\C) \longrightarrow \Pt_E(\C)
\]
sending a point \((A,\alpha,\beta)\) to the pullback of \(\alpha\) along \(p\).

In particular, the functor
\[
	\Ker_B \colon \Pt_B(\C) \longrightarrow \C,
\]
which assigns to each point \((A,\alpha,\beta)\) the kernel of \(\alpha\), plays a central role in the literature and in the present work.
We refer to it as the \defn{kernel functor}.

\begin{remark}\label{Remark Split Extension}
	In accordance with Theorem~\ref{BouJa Charact}, for any split extension of Heyting semilattices as in~\eqref{eq:split_ext}, every element \(a \in A\) can be written as
	\[
		a
		= \bigl( (a \iimplies \beta\alpha(a)) \iimplies \beta\alpha(a) \bigr)
		\wedge
		\bigl( ((a \iimplies \beta\alpha(a)) \iimplies \beta\alpha(a)) \iimplies a \bigr),
	\]
	where both
	\(
	a \iimplies \beta\alpha(a)
	\)
	and
	\(
	((a \iimplies \beta\alpha(a)) \iimplies \beta\alpha(a)) \iimplies a
	\)
	belong to \(X\), while \(\beta\alpha(a)\) is in \(B\).
	Equivalently, we may write \(a = (x_1 \iimplies b) \wedge x_2\) for some \(x_1\), \(x_2 \in X\) and \(b \in B\).
	Viewing \(X\) and \(B\) as subobjects of \(A\), we thus see that \(A\) is generated by \(X\) and \(B\).
\end{remark}

\section{Centralisers and Commutators}\label{section acc and w}

In~\cite{Bourn-Gray}, the authors define \defn{algebraically cartesian closed} categories as those for which any change-of-base functor along the terminal map has a right adjoint. It is proven there that, in our context, algebraic cartesian closedness is equivalent to the existence of \emph{centralisers}.

\begin{definition}\label{Def Centraliser}
	Let \(X\oleq H\) be a subobject of an object \(H\). The \textbf{centraliser} \(Z_H(X)\) of \(X\) in \(H\) is the largest subobject of \(H\) which cooperates with \(X\).
\end{definition}

\begin{example}
	In \(\HSLat\), the centraliser of \(X\) in \(H\) is given by
	\[
		Z_H(X)
		= \{h\in H \mid \text{for each \(x\in X\), \((x\iimplies h)=h\) and \((h\iimplies x)=x\)}\}\, .
	\]
\end{example}

This observation, made in~\cite{HSLat}, allowed us to prove the following.

\begin{proposition}[\cite{HSLat}]\label{Proposition Centralisers}
	The category \(\HSLat\) is algebraically cartesian closed. Moreover, centralisers of normal subobjects are normal in~\(\HSLat\). \noproof
\end{proposition}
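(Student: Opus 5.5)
Proposition~\ref{Proposition Centralisers} has two claims. The first is algebraic cartesian closedness of \(\HSLat\). I would reduce it to the existence of centralisers, using the equivalence of Bourn--Gray recalled above. So I only need to check that the displayed set
\[
	Z_H(X)=\{h\in H \mid (x\iimplies h)=h \text{ and } (h\iimplies x)=x \text{ for all } x\in X\}
\]
is a subobject of \(H\) that cooperates with \(X\) and contains every subobject cooperating with \(X\).

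Maximality and cooperation follow directly from Proposition~\ref{proposition commute in hslat}. If \(Y\oleq H\) cooperates with \(X\), then every \(y\in Y\) satisfies the two identities, so \(Y\oleq Z_H(X)\). Conversely, once \(Z_H(X)\) is known to be a subalgebra, the same proposition shows it cooperates with \(X\).

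The real work is closure of \(Z_H(X)\) under the operations. Fix \(x\in X\); it suffices to use \((x\iimplies h)=h\) and \((h\iimplies x)=x\), i.e.\ \(x\psj h=1\).
\begin{itemize}
\item \(1\in Z_H(X)\) is clear.
\item For meets, \(x\iimplies(h\wedge k)=(x\iimplies h)\wedge(x\iimplies k)=h\wedge k\). Also \((h\wedge k)\iimplies x=h\iimplies(k\iimplies x)=h\iimplies x=x\).
\item For implications, \(x\iimplies(h\iimplies k)=h\iimplies(x\iimplies k)=h\iimplies k\).
\item The delicate identity is \((h\iimplies k)\iimplies x=x\). Since \(x\le (h\iimplies k)\iimplies x\), I need the reverse inequality. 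Because \(k\le h\iimplies k\), we get \((h\iimplies k)\iimplies x\le k\iimplies x=x\).
\end{itemize}
So all identities close up using only the adjunction laws. I expect this last inequality to be the only step needing care.

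The second claim is that centralisers of normal subobjects are normal. By Lemma~\ref{Lemma Normal is Filter}, when \(X\) is a filter I must show \(Z_H(X)\) is upward closed, since closure under meets is already done. Take \(h\in Z_H(X)\) and \(g\ge h\).
\begin{itemize}
\item First, \(g\iimplies x\le h\iimplies x=x\), and \(x\le g\iimplies x\) always holds, so \(g\iimplies x=x\).
\item Next I need \(x\iimplies g=g\). Here I use that \(X\) is a filter: \(x\iimplies g\ge g\ge h\), and \(x\iimplies g\in X\)?
\end{itemize}
This membership is not clear in general, so I would argue more directly. Since \(x\vee h=1\) in the sense of pseudo-joins, I would show \((x\iimplies g)\iimplies g=1\). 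As \(g\ge h\), we have \((x\iimplies g)\iimplies g\ge (x\iimplies h)\iimplies h=h\iimplies h=1\), using \(x\iimplies h=h\). Hence \(x\iimplies g\le g\), and the reverse inequality is automatic, so \(x\iimplies g=g\).

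This monotonicity argument actually shows that \(Z_H(X)\) is upward closed for any \(X\), which is stronger than needed. I would double-check the step \((x\iimplies g)\iimplies g\ge (x\iimplies h)\iimplies h\) carefully, since it mixes co- and contravariance; it may need to go through \(x\iimplies g\le x\iimplies\ldots\) via \(h\)'s identities instead. That step is the main obstacle.
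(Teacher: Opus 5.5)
Your treatment of the first claim is correct. The closure computations for \(Z_H(X)\), including \((h\iimplies k)\iimplies x\leq k\iimplies x = x\), show that it is a subalgebra. Proposition~\ref{proposition commute in hslat} then gives cooperation and maximality, so the Bourn--Gray characterisation yields algebraic cartesian closedness. Your argument that \(g\iimplies x = x\) for \(g\geq h\) is also fine.

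The gap is exactly at the step you flagged. For \(g\geq h\), the inequality \((x\iimplies g)\iimplies g\geq (x\iimplies h)\iimplies h\) is false in a general Heyting semilattice. It is a one-sided form of monotonicity of the pseudo-join. That monotonicity is available in the semi-Boolean case (item~(3) of Proposition~\ref{proposition equivalences semi-boolean}), and it is precisely what the paper uses for \(\ELHSLat\) in Proposition~\ref{proposition centraliser}. In Example~\ref{example non semi-boolean}, take \(h=y\) and \(g=z\). Then \((x\iimplies y)\iimplies y = y\iimplies y = 1\), but \((x\iimplies z)\iimplies z = 1\iimplies z = z\). Your remark that the argument would make \(Z_H(X)\) up-closed for \emph{every} \(X\) should have been the red flag. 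The paper points out that \(Z_H(\{x,1\}) = \{y,1\}\) is not up-closed in that example. So the normality of \(X\) is indispensable, and your final argument never uses it.

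The repair applies the filter property to \((x\iimplies g)\iimplies g\) rather than to \(x\iimplies g\). First, let \(h\in Z_H(X)\) and let \(w\in X\) with \(w\geq h\). Then \(w = h\iimplies w = 1\), so \(X\cap\ua{h} = \{1\}\). Now let \(x\in X\) and \(g\geq h\). The element \((x\iimplies g)\iimplies g\) lies above \(x\), because \(x\wedge(x\iimplies g)\leq g\), so it belongs to the up-closed set \(X\). It also lies above \(g\geq h\). Therefore it equals \(1\), i.e.\ \(x\iimplies g\leq g\), and so \(x\iimplies g = g\). Combined with \(g\iimplies x = x\), this gives \(g\in Z_H(X)\). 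Hence \(Z_H(X)\) is a filter, and it is normal by Lemma~\ref{Lemma Normal is Filter}.
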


In general, it is not true in \(\HSLat\) that the centraliser of an arbitrary subobject is normal. For instance, in Example~\ref{example non semi-boolean}, the centraliser of \( \{x,1\}\) is \( \{y,1\}\), which is not up-closed. However, the situation improves in \(\ELHSLat\), as shown below.

\begin{proposition}\label{proposition centraliser}
	Let \(H\) be an object of \(\ELHSLat\), and let \(X \oleq H\) be a subobject. Then the centraliser \(Z_H(X)\) of \(X\) in \(H\) is a normal subobject.
\end{proposition}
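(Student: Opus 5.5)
The plan is to use the explicit description of the centraliser and check directly that it is a filter; by Lemma~\ref{Lemma Normal is Filter}, this is exactly normality. Since \(\ELHSLat\) is a full subcategory of \(\HSLat\) closed under products and subobjects, a cooperator \(X\times Y\to H\) in \(\ELHSLat\) is the same thing as one in \(\HSLat\). So Proposition~\ref{proposition commute in hslat} applies unchanged. Using its condition (iii), the centraliser described in the Example above can be rewritten as
\[
	Z_H(X)=\{h\in H \mid x\vee h=1 \text{ for all } x\in X\},
\]
where \(\vee\) is the pseudo-join. In \(\ELHSLat\) this pseudo-join coincides with the join, so \(H\) is semi-Boolean.

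I would then verify the filter axioms one at a time.

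\emph{Non-emptiness.} We have \(1\in Z_H(X)\), since \(x\vee 1=1\).

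\emph{Upward closure.} Suppose \(h\in Z_H(X)\) and \(h\leq h'\). Condition (3) of Proposition~\ref{proposition equivalences semi-boolean} (monotonicity of \(\vee\)) gives \(1=x\vee h\leq x\vee h'\). Hence \(x\vee h'=1\) for all \(x \in X\), so \(h'\in Z_H(X)\).

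\emph{Closure under meets.} For \(h,h'\in Z_H(X)\), condition (4) of the same proposition (distributivity of \(\vee\) over \(\wedge\)) gives
\[
	x\vee(h\wedge h')=(x\vee h)\wedge(x\vee h')=1\wedge 1=1 .
\]

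This shows \(Z_H(X)\) is a filter, hence normal. There is one consistency point to address: the centraliser must be a sub-Heyting semilattice, and the filter must be the same subobject. Any filter \(F\) is closed under \(\iimplies\), because for \(k\in F\) and any \(h\) we have \(k\leq h\iimplies k\), so \(h\iimplies k\in F\) by upward closure; and \(F\) contains \(1\). So the filter is the subobject \(Z_H(X)\) itself.

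The only real obstacle is the conceptual one: making sure the description of \(Z_H(X)\) via \(x\vee h=1\) is legitimate inside \(\ELHSLat\). This is handled by the closure remark in the first paragraph. After that, the semi-Boolean identities from Proposition~\ref{proposition equivalences semi-boolean} make the verification immediate. These identities are exactly what fails in Example~\ref{example non semi-boolean}, which explains why the result does not hold in \(\HSLat\).
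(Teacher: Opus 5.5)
Your proof is correct and is essentially the paper's argument: you rewrite membership in \(Z_H(X)\) as \(x\vee h=1\) for all \(x\in X\), and you get upward closure from the monotonicity of the join in item~(3) of Proposition~\ref{proposition equivalences semi-boolean}. Your separate checks of non-emptiness and meet-closure, and the remark that filters are subalgebras, are correct but redundant: \(Z_H(X)\) is by definition already a subobject, which is why the paper checks only upward closure.
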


\begin{proof}
	To show that \(Z_H(X)\) is normal, it suffices to prove that it is a filter in \(H\). Let \(h'\in H\) be such that \(h' \geq h\) for some \(h\in Z_H(X)\). Recall (see Proposition~\ref{proposition commute in hslat}) that \(h\in Z_H(X)\) is equivalent to the condition that
	\[
		h \vee x = 1 \quad \text{for all \(x\in X\).}
	\]
	Now, applying the third item of Proposition~\ref{proposition equivalences semi-boolean}, we have
	\[
		1 = h \vee x \;\leq\; h' \vee x .
	\]
	Thus \(h'\vee x = 1\) for all \(x\in X\), which implies \(h' \in Z_H(X)\). Hence \(Z_H(X)\) is up-closed, and therefore normal.
\end{proof}

Using Proposition~\ref{proposition commutator is intersection}, one easily sees that in \(\HSLat\), if the intersection of the normal closures \(\bar X\) and \(\bar Y\) of two subobjects \(X\), \(Y \oleq H\) is trivial, then \(X\) and~\(Y\) cooperate in \(H\). This observation is in fact valid in any semi-abelian category (as a direct consequence of Corollary 2.17 from~\cite{HSLat}). The converse does not hold in general, as can again be seen from Example~\ref{example non semi-boolean}. However, in the case of equationally linear Heyting semilattices, these conditions are equivalent.

\begin{proposition}\label{proposition elhslat huq}
	Let \(H\) be an object of \(\ELHSLat\), and let \(X\), \(Y \oleq H\) be two subobjects. Then, the subobjects \(X\) and \(Y\) Huq-commute in \(H\) if and only if their normal closures \(\bar X\) and \(\bar Y\) have trivial intersection.
\end{proposition}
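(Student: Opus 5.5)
The ``if'' direction holds in any semi-abelian category, as noted just before the statement. Indeed, if \(\bar X \omeet \bar Y = 0\), then \(X\) and \(Y\) cooperate in \(H\). So we only need the converse: assume \(X\) and \(Y\) Huq-commute in \(H\) and show \(\bar X \omeet \bar Y = \{1\}\).

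First, I would describe the normal closures explicitly. In a Heyting semilattice the filter generated by a subobject \(X\) is the up-closure of \(X\). Since \(X\) is closed under \(\wedge\) and contains \(1\), its up-set is non-empty and closed under meets, so by Lemma~\ref{Lemma Normal is Filter} it is the smallest normal subobject containing \(X\). Thus
\[
	\bar X = \{h \in H \mid h \geq x \text{ for some } x \in X\},
\]
and similarly for \(\bar Y\).

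Next, I would enlarge the centraliser step by step. By Proposition~\ref{proposition commute in hslat}, Huq-commutation means \(Y \oleq Z_H(X)\). By Proposition~\ref{proposition centraliser}, \(Z_H(X)\) is a filter, so \(\bar Y \oleq Z_H(X)\); that is, \(\bar Y\) cooperates with \(X\). Since cooperation is symmetric, \(X \oleq Z_H(\bar Y)\), and applying Proposition~\ref{proposition centraliser} again gives \(\bar X \oleq Z_H(\bar Y)\). Hence \(\bar X\) and \(\bar Y\) cooperate. The same conclusion can be reached directly using the monotonicity of \(\vee\) from Proposition~\ref{proposition equivalences semi-boolean}: if \(h \geq x\) and \(k \geq y\), then \(1 = x \vee y \leq h \vee k\).

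Finally, take \(h \in \bar X \omeet \bar Y\). Since \(\bar X\) and \(\bar Y\) cooperate, Proposition~\ref{proposition commute in hslat} applied with \(h\) in both roles gives \(h = (h \iimplies h) = 1\). Equivalently, \(h = h \vee h = 1\). So the intersection is trivial.

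There is no real obstacle here. The only point needing care is the identification of the normal closure with the up-closure. This holds because a subobject is a sub-Heyting-semilattice, hence already closed under meets, so its up-set is a filter.
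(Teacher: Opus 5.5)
Your proof is correct and follows the paper's argument. As in the paper, normality of centralisers (Proposition~\ref{proposition centraliser}) is applied twice to pass from \(X\), \(Y\) to \(\bar X\), \(\bar Y\), and the ``if'' direction comes from the general semi-abelian remark; the only difference is your final step, where you conclude elementwise from Proposition~\ref{proposition commute in hslat} (\(h = h \iimplies h = 1\)) rather than via \([\bar X,\bar Y] = \bar X \omeet \bar Y\) from Proposition~\ref{proposition commutator is intersection}, which is a slightly more direct finish.
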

\begin{proof}
	Suppose that \(X\) and \(Y\) Huq-commute. Then, by definition of a centraliser, we have that \(Y \oleq Z_H(X)\). Since \(Z_H(X)\) is normal in \(H\) by Proposition~\ref{proposition centraliser}, it follows that \(\bar Y \oleq Z_H(X)\), and hence \(X\) and \(\bar Y\) Huq-commute.
	Applying the same argument again, we obtain \(\bar X \oleq Z_H(\bar Y)\), so that \(\bar X\) and \(\bar Y\) Huq-commute.

	We conclude using the fact that two subobjects Huq-commute if and only if their Higgins commutator vanishes, together with Proposition~\ref{proposition commutator is intersection}.
\end{proof}

For the sake of completeness, we remark that a more element-wise (and less categorical) proof of the previous fact can be obtained using the following lemma.

\begin{lemma}
	Let \(S\) be a non-empty subset of an equationally linear Heyting semilattice \(H\), and suppose that \(S\) is closed under meets.
	Then the normal subobject \( \bar S \) generated by \(S\) in \(H\) is
	\[
		\bar S \;=\; \{\, s \vee h \mid s \in S,\ h \in H \,\}.
	\]
\end{lemma}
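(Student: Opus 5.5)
We write \(T = \{\, s \vee h \mid s \in S,\ h \in H \,\}\) and show \(T = \bar S\) by double inclusion. By Lemma~\ref{Lemma Normal is Filter}, \(\bar S\) is the smallest filter of \(H\) containing \(S\), so it is enough to check that \(T\) contains \(S\), lies inside every filter containing \(S\), and is itself a filter.

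For the first two points: taking \(h = s\) gives \(s \vee s = s\), so \(S \subseteq T\). Conversely, \(s \leq s \vee h\) by the first listed property of the pseudo-join, so any up-closed set containing \(S\) contains \(T\); in particular \(T \subseteq \bar S\).

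It remains to show that \(T\) is a filter. \(T\) is non-empty because \(S\) is.

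\emph{Up-closure.} Let \(t = s \vee h \leq h'\). Then \(s \leq h'\), so \(s \vee h' = h'\) by the second listed property of the pseudo-join. Hence \(h' = s \vee h' \in T\). Note that this step only uses \(s \leq t \leq h'\), not monotonicity of \(\vee\).

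\emph{Closure under meets.} This is the main point and the only place where equational linearity enters. Given \(s_1 \vee h_1\) and \(s_2 \vee h_2\) in \(T\), I will distribute the meet over the joins, using Proposition~\ref{proposition equivalences semi-boolean}(5) and commutativity of \(\vee\):
\[
	(s_1 \vee h_1) \wedge (s_2 \vee h_2) = (s_1 \wedge s_2) \vee (s_1 \wedge h_2) \vee (h_1 \wedge s_2) \vee (h_1 \wedge h_2).
\]
Since \(\vee\) is associative by Proposition~\ref{proposition equivalences semi-boolean}(2), I regroup this as \((s_1 \wedge s_2) \vee h\), with \(h = (s_1 \wedge h_2) \vee (h_1 \wedge s_2) \vee (h_1 \wedge h_2) \in H\). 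Because \(S\) is closed under meets, \(s_1 \wedge s_2 \in S\), so the meet lies in \(T\).

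Alternatively, and more simply, let \(s = s_1 \wedge s_2 \in S\) and \(t = (s_1 \vee h_1) \wedge (s_2 \vee h_2)\). Then \(s \leq t\), since \(s \leq s_i \leq s_i \vee h_i\) for each \(i\). So \(t = s \vee t \in T\), exactly as in the up-closure step. This version uses only property (2) of the pseudo-join and meet-closure of \(S\), so it avoids the distributivity computation altogether.

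The real content therefore lies in the application to Proposition~\ref{proposition elhslat huq}. There one uses Proposition~\ref{proposition equivalences semi-boolean}(2) to evaluate expressions such as \((s \vee h) \vee y\) in terms of \(s \vee y\).

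Since semi-Boolean structure is available throughout \(\ELHSLat\), I expect no genuine obstacle in the proof itself; the only care needed is to invoke the correct items of Proposition~\ref{proposition equivalences semi-boolean}.
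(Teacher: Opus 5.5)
Your proof is correct. Its overall structure (exhibit the set as a filter containing \(S\) and contained in every filter containing \(S\)) and your first meet-closure computation, via distributivity of \(\wedge\) over \(\vee\), coincide with the paper's proof.

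Your alternative meet-closure argument is a genuinely more elementary route, and so is your up-closure step, which writes \(h' = s \vee h'\) directly and so avoids the associativity the paper implicitly uses in \(h' = s \vee (h \vee h')\). Together they show that \(\{s \vee h \mid s \in S,\ h \in H\}\) is simply the up-closure \(\ua S\): indeed \(s \leq s \vee h\), and \(h' = s \vee h'\) whenever \(s \leq h'\). The up-closure of a non-empty meet-closed set is automatically a filter. These facts are items (1)--(3) of the pseudo-join, which hold in every Heyting semilattice. Hence your argument proves the lemma in all of \(\HSLat\), with \(\vee\) read as \(\psj\). Your opening remark that meet-closure is ``the only place where equational linearity enters'' is therefore superseded by your own alternative, in which it enters nowhere. (That version also uses property (1), not only (2).)

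The paper's distributivity computation buys nothing extra for the lemma itself. As you suspect, the semi-Boolean hypothesis is only needed in the subsequent application to Proposition~\ref{proposition elhslat huq}. There, an element \(z\) lying above both \(x\) and \(y\) must lie above \(x \vee y = 1\), and this fails for the pseudo-join in Example~\ref{example non semi-boolean}.
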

\begin{proof}
	We first prove that \(\bar S\) is a filter of \(H\).
	Let \(h' \in H\) be such that \(h' \ge s \vee h\) for some \(s\in S\) and \(h\in H\).
	Then \(h' = s \vee (h \vee h')\), so \(h'\in \bar S\).
	To show that \(\bar S\) is closed under meets, take \(s,s'\in S\) and \(h,h'\in H\); then
	\begin{align*}
		(s\vee h) \wedge (s'\vee h')
		 & = (s\wedge(s'\vee h')) \vee (h \wedge (s'\vee h'))                               \\
		 & = (s\wedge s') \vee (s\wedge h') \vee (h\wedge s') \vee (h\wedge h') \in \bar S,
	\end{align*}
	where we used that \(S\) is closed under meets and the distribution of the meet over the join given by Proposition~\ref{proposition equivalences semi-boolean}.

	Now let us show that \(\bar S\) is the smallest normal subobject of \(H\) containing \(S\).
	Let \(K \normal H\) with \(S \oleq K\).
	For any \(s\in S\) and \(h\in H\), we have \(s \le s\vee h\); since \(K\) is a filter and contains \(s\), it must also contain \(s\vee h\).
	Thus \(\bar S \oleq K\), which concludes the proof.
\end{proof}

Now, in order to obtain an alternative proof of Proposition~\ref{proposition elhslat huq}, suppose that \(z\) belongs to \(\bar X \omeet \bar Y\).
Then
\[
	z = x \vee h = y \vee h' = z \vee z = x \vee h \vee y \vee h',
\]
for some \(x\in X\), \(y\in Y\) and \(h\), \(h'\in H\).
If \(X\) and \(Y\) cooperate, then \(x \vee y = 1\) by Proposition~\ref{proposition commute in hslat}, and therefore \(z = 1\).

The property stated in Proposition~\ref{proposition elhslat huq} is a remarkably strong one. The only other semi-abelian category we know that satisfies it is the category of Boolean rings without unit. The reason is that this property is, as we establish below in Theorem~\ref{theorem arithmetical and w}, equivalent to \emph{arithmeticity together with the condition~\W{}}.

The condition \W{}, introduced in~\cite{MFVdL3}, essentially requires that \emph{weighted commutators}~\cite{GJU} be independent of the chosen weight. It is a strong restriction: the category \(\Grp\) of groups does not satisfy it (as shown in~\cite{MFVdL3}), and neither does the category \(\HSLat\) of Heyting semilattices, as demonstrated in~\cite{HSLat} using the following equivalent characterisation of~\W{}, established there.

\begin{theorem}\label{theorem w}
	Let \(\C\) be a semi-abelian category. The following conditions are equivalent:
	\begin{tfae}
		\item \(\C\) satisfies~\SH{} (see~\cite{MFVdL}), and for all subobjects \(X\), \(Y \oleq A\),
		\[
			\text{\([X,Y] = 0\) \quad if and only if \quad \([\bar X, \bar Y] = 0\);}
		\]
		\item \(\C\) satisfies~\W{}.\noproof
	\end{tfae}
\end{theorem}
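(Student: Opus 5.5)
The plan is to translate both conditions into statements about binary and ternary Higgins commutators, and then compare them.

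\textbf{Step 1 (the description of~\W{} to use).} For normal subobjects \(X\), \(Y \normal A\) and any weight \(W \oleq A\), I will use the known decomposition of the Gran--Janelidze--Ursini weighted commutator as the normal closure of the join
\[
	[X,Y]\ojoin[X,Y,W],
\]
where \([X,Y,W]\) is the ternary Higgins commutator. Thus~\W{} amounts to the inclusion \([X,Y,W]\oleq[X,Y]^{\Huq}\) for all normal \(X\), \(Y\) and all \(W\). By monotonicity of \([X,Y,-]\), this reduces to \(W=A\). Similarly, I will use the characterisation of~\SH{} in terms of commutators, namely \([X,Y]=0\Rightarrow[X,Y,A]=0\) for \(X\), \(Y\normal A\) (see~\cite{MFVdL}). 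With this, one immediately gets the implication (ii)\(\Rightarrow\)\SH{}.

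\textbf{Step 2 (the key expansion).} Next I will expand the commutator of normal closures. Since \(\bar X = X\ojoin[X,A]\), the standard rules for Higgins commutators of joins give an inclusion of the form
\[
	[\bar X,\bar Y]\oleq \overline{[X,Y]\ojoin[X,Y,A]\ojoin\cdots},
\]
where the omitted terms are higher cosmash-type commutators, each containing both an \(X\)-entry and a \(Y\)-entry together with some \(A\)-entries. I intend to bound every such term by \([X,Y,A]\). For this I will use that the weighted commutator with weight \(A\) is already a normal subobject of \(A\) which makes \(\bar X\) and \(\bar Y\) commute in the quotient. This is the usual fact that \(\bar X\) and \(\bar Y\) cooperate in \(A/N\) as soon as \(X\) and \(Y\) cooperate there and \(N\) absorbs the weight; I will prove it by passing to the quotient \(A/N\) and using that normal closures are preserved by regular epimorphisms. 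Since \([\bar X,\bar Y]=0\) is equivalent to \([\bar X,\bar Y]^{\Huq}=0\), the outcome should be the equivalence
\[
	[\bar X,\bar Y]=0 \iff [X,Y]=0 \text{ and } [X,Y,A]=0,
\]
valid for arbitrary subobjects \(X\), \(Y\oleq A\).

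\textbf{Step 3 (both directions).} For (ii)\(\Rightarrow\)(i), note first that Step~2 only produces \([X,Y,A]\) for arbitrary subobjects, whereas~\W{} controls it only for normal ones. So given \([X,Y]=0\) with \(X\), \(Y\) arbitrary, I will first show that \([X,Y,A]=0\), using \(X\oleq\bar X\), \(Y\oleq\bar Y\) and applying~\W{} to the pair \(\bar X\), \(\bar Y\). Step~2 then gives \([\bar X,\bar Y]=0\). The converse \([\bar X,\bar Y]=0\Rightarrow[X,Y]=0\) is just monotonicity. For (i)\(\Rightarrow\)(ii), take normal \(X\), \(Y\) and pass to the quotient \(A/[X,Y]^{\Huq}\), where the images of \(X\) and \(Y\) commute. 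Condition~\SH{} then kills the image of \([X,Y,A]\) there, so \([X,Y,A]\oleq[X,Y]^{\Huq}\), which is~\W{}. The hypothesis on normal closures is exactly what lets this argument run for the arbitrary weights that appear in the proof of (ii)\(\Rightarrow\)(i).

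\textbf{Main obstacle.} The delicate point is the reduction from arbitrary subobjects to normal closures in (ii)\(\Rightarrow\)(i). Weighted commutators and~\W{} are stated for normal subobjects, while \([X,Y]=0\) for arbitrary \(X\), \(Y\) need not make the normal closures commute. My first attempt will be to apply~\W{} to \(\bar X\) and \(\bar Y\) with weight \(A\), and to show that the obstruction \([\bar X,\bar Y]\) is generated by \([X,Y]\) together with ternary terms. If this turns out not to be enough, I will instead work inside \(X\ojoin Y\), where the relevant closures are \(X^{Y}\) and \(Y^{X}\), and use the intersection formula of Proposition~\ref{proposition commutator is intersection} as a model computation.
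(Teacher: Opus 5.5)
\textbf{There is a genuine gap: your reading of \W{} collapses it to \SH{}.} Note that the paper gives no proof of this theorem; it quotes it from \cite{HSLat}, so I assess your argument on its own terms.

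In Step 1 you read \W{} as weight-independence of weighted commutators for \emph{normal} $X$, $Y$ only, and reduce it to $[X,Y,A]\oleq[X,Y]^{\Huq}$ for $X,Y\normal A$. That is exactly the commutator form of \SH{} you quote in the next sentence. So under your reading, (ii) is just \SH{}, and the theorem would say that \SH{} forces ``$[X,Y]=0\Rightarrow[\bar X,\bar Y]=0$''. This is false. $\Grp$ satisfies \SH{}, yet $X=Y=\langle(1\,2)\rangle\oleq S_3$ commute while $\bar X=\bar Y=S_3$ do not; this is also why the paper says $\Grp$ fails \W{}.

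The problem surfaces at the circular step of your (ii)$\Rightarrow$(i). Applying \W{} to $\bar X$, $\bar Y$ only yields $[\bar X,\bar Y,A]\oleq[\bar X,\bar Y]^{\Huq}$, which vanishes only if you already know $[\bar X,\bar Y]=0$. In the $S_3$ example, $[X,Y]=0$ but $[X,Y,S_3]\neq 0$; otherwise your own Step 2 bound would give $[S_3,S_3]=0$. So no argument can extract $[X,Y,A]=0$ from \SH{}. Symmetrically, your (i)$\Rightarrow$(ii) never uses the normal-closure hypothesis, which is another symptom that you are proving \SH{}$\Leftrightarrow$\SH{}. Your fallback via the intersection formula for $[X,Y]$ is not available either, since that formula requires arithmeticity.

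\textbf{The fix.} The missing idea is that \W{} quantifies over arbitrary, non-normal $X$, $Y$. The form you need is: $[X,Y]=0\Rightarrow[X,Y,A]=0$ for all subobjects $X$, $Y\oleq A$.

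With that, Step 2 is the right key lemma, but your justification of it is circular: the ``usual fact'' you invoke is precisely the inclusion to be proved. An honest route uses:
\begin{enumerate}
\item $\bar S=S\ojoin[S,A]$;
\item the join formula $[S\ojoin T,U]=[S,U]\ojoin[T,U]\ojoin[S,T,U]$ and its ternary analogue;
\item $[[S,T],U]\oleq[S,T,U]$, which comes from the comparison $(S\diamond T)\diamond U\to S\diamond T\diamond U$;
\item monotonicity, and $[S,T,A,A]\oleq[S,T,A]$, which follows from $A\ojoin A=A$.
\end{enumerate}
These give $[\bar X,Y]\oleq[X,Y]\ojoin[X,Y,A]$ and $[\bar X,Y,A]\oleq[X,Y,A]$. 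Hence $[\bar X,\bar Y]\oleq[X,Y]\ojoin[X,Y,A]$ in any semi-abelian category. The converse half of your claimed equivalence needs \SH{}: for normal $X$, $Y$ in a category without \SH{} it fails.

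The two directions then go through as follows.
\begin{itemize}
\item (ii)$\Rightarrow$(i). \SH{} is the normal case of \W{}. For arbitrary $X$, $Y$, the condition $[X,Y]=0$ gives $[X,Y,A]=0$ directly, hence $[\bar X,\bar Y]=0$ by the inclusion above.
\item (i)$\Rightarrow$(ii). For arbitrary $X$, $Y$ with $[X,Y]=0$, the hypothesis gives $[\bar X,\bar Y]=0$, then \SH{} gives $[\bar X,\bar Y,A]=0$, and monotonicity gives $[X,Y,A]=0$. This is exactly where the normal-closure hypothesis is used.
\end{itemize}
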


\begin{theorem}\label{theorem arithmetical and w}
	Let \(\C\) be a semi-abelian category. The following conditions are equivalent:
	\begin{tfae}
		\item for all subobjects \(X\), \(Y \oleq A\),
		\[
			[X,Y] = 0 \quad \text{if and only if} \quad \bar X \omeet \bar Y = 0;
		\]
		\item \(\C\) is arithmetical and satisfies condition~\W{}.
	\end{tfae}
\end{theorem}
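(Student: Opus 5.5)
We prove the two implications separately, relying on the characterisation of \W{} in Theorem~\ref{theorem w} together with standard facts about arithmetical semi-abelian categories. Recall that a semi-abelian category is arithmetical exactly when, for all normal subobjects \(K\), \(L \normal A\), the commutator \([K,L]\) coincides with \(K \omeet L\). This is the Huq (equivalently Smith) commutator of normal subobjects; in an arithmetical category it is also the Higgins commutator, and in a Mal'tsev category it agrees with the Smith commutator. Arithmetical categories moreover satisfy \SH{}. We shall use the form \([K,L]=K\omeet L\) for normal \(K\), \(L\). If needed, we will cite the standard equivalence: a Mal'tsev variety is arithmetical if and only if it is congruence distributive, which happens if and only if \([R,S]=R\wedge S\) for all congruences.

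\emph{(ii) implies (i).} Suppose \(\C\) is arithmetical and satisfies \W{}. By Theorem~\ref{theorem w}, \([X,Y]=0\) if and only if \([\bar X,\bar Y]=0\). By arithmeticity, \([\bar X,\bar Y]=\bar X\omeet\bar Y\). Combining these gives (i).

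\emph{(i) implies (ii).} First we show arithmeticity. Take normal \(K\), \(L\normal A\). Since \([K,L]\oleq K\omeet L\) always holds, it suffices to show \(K\omeet L\oleq[K,L]\). For normal subobjects the Higgins commutator \([K,L]\) is normal, and we may pass to the quotient \(q\colon A\to A/[K,L]\). The images \(q(K)\) and \(q(L)\) commute in \(A/[K,L]\): their commutator is \(q([K,L])=0\), because regular images preserve Higgins commutators. They are also normal, since regular images of normal subobjects are normal in a semi-abelian category. Applying (i) in \(A/[K,L]\) gives \(q(K)\omeet q(L)=0\). We then need \(q(K\omeet L)\oleq q(K)\omeet q(L)\), which is trivial, so \(K\omeet L\oleq \ker q=[K,L]\). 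This yields \([K,L]=K\omeet L\) for normal subobjects, and we conclude arithmeticity through the characterisation above. \SH{} then follows from arithmeticity, since the Smith and Huq commutators both reduce to intersections.

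Next we verify the remaining clause of Theorem~\ref{theorem w}. If \([X,Y]=0\), then (i) gives \(\bar X\omeet\bar Y=0\). Since \(\bar X\) and \(\bar Y\) are normal, \(\bar{\bar X}=\bar X\), and applying (i) to the pair \(\bar X\), \(\bar Y\) gives \([\bar X,\bar Y]=0\). The converse holds because \([X,Y]\oleq[\bar X,\bar Y]\) by monotonicity of the commutator. Theorem~\ref{theorem w} then gives \W{}.

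The main obstacle is the arithmeticity step. It requires the precise categorical characterisation, \([K,L]=K\omeet L\) for normal subobjects (equivalently \([R,S]=R\wedge S\) for equivalence relations, via \SH{}/Mal'tsev arguments), and the verification that Higgins commutators are preserved under regular images. I would cite these standard facts from the literature; Mantovani--Metere give the preservation of Higgins commutators under regular images.
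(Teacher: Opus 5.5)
Your proof is correct, and its skeleton matches the paper's. Both directions go through Theorem~\ref{theorem w} plus a cited characterisation of arithmeticity. Yours, \([K,L]=K\omeet L\) for all normal \(K\), \(L\), is a valid equivalent of the one the paper uses.

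The differences are local. To get arithmeticity from (i), the paper simply takes \(X=Y=A\). Then \(A\) cooperates with itself only if \(A=0\), so there are no non-trivial abelian objects, and Theorem~3.2 of the earlier paper converts this into arithmeticity. You instead pass to \(A/[K,L]\), using that Higgins commutators of normal subobjects are normal and are preserved by regular images. This is essentially the standard proof that ``no non-trivial abelian objects'' forces commutators of normal subobjects to be intersections. It buys you the explicit form \([K,L]=K\omeet L\), at the price of those two extra standard facts. The normality of \(q(K)\) and \(q(L)\) is not actually needed, since (i) is phrased in terms of normal closures.

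For the second clause of Theorem~\ref{theorem w}, the paper invokes arithmeticity again: Theorem~3.7 of the earlier paper gives \([\bar X,\bar Y]=\bar X\omeet\bar Y\). You instead apply (i) a second time to the pair \(\bar X\), \(\bar Y\) and use monotonicity of the commutator, which is slightly more self-contained.

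One remark in your preamble is false as stated. Huq and Smith commutators do not coincide in an arbitrary Mal'tsev or semi-abelian category; that coincidence is precisely condition \SH{}. Your argument never relies on this claim, and you derive \SH{} from arithmeticity separately, so the proof is unaffected.
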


\begin{proof}
	First observe that, taking \(X = Y = A\), one sees that \(A\) cooperates with itself if and only if \(A\) is the zero object. By Theorem~3.2 of~\cite{HSLat}, this implies that \(\C\) is arithmetical. Since any arithmetical category satisfies~\SH{}~\cite{MFVdL3}, Theorem~\ref{theorem w} reduces us to showing that for all subobjects \(X\), \(Y \oleq A\),
	\[
		\text{\([X,Y] = 0\) \quad if and only if \(\quad [\bar X, \bar Y] = 0\).}
	\]
	But when \(\C\) is arithmetical, we have \([\bar X,\bar Y] = \bar X \omeet \bar Y\) by Theorem~3.7 of~\cite{HSLat}, which yields the claim.

	The converse direction follows similarly from Theorem~3.7 of~\cite{HSLat}.
\end{proof}

\begin{corollary}
	The category \(\ELHSLat\) satisfies the condition \W{}.\noproof
\end{corollary}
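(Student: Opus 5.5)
The corollary follows by combining Theorem~\ref{theorem arithmetical and w} with Proposition~\ref{proposition elhslat huq}. Concretely, I will verify condition (1) of Theorem~\ref{theorem arithmetical and w} for \(\C = \ELHSLat\). The implication (1)\(\Rightarrow\)(2) then gives that \(\ELHSLat\) is arithmetical and satisfies \W{}, and in particular \W{}.

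First I recall the standing facts. \(\ELHSLat\) is a subvariety of \(\HSLat\) containing the Johnstone terms~\eqref{protomodular terms for HSL}, so it is semi-abelian and Theorem~\ref{theorem arithmetical and w} applies. For arbitrary subobjects \(X\), \(Y \oleq A\), the vanishing of the Higgins commutator, \([X,Y]=0\), is equivalent to \([X,Y]^{\Huq}=0\). That in turn is equivalent to \(X\) and \(Y\) Huq-commuting in \(A\), which is the relationship recalled after Definition~\ref{definition higgins commutator}.

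Then I apply Proposition~\ref{proposition elhslat huq}, which says that in \(\ELHSLat\) the subobjects \(X\) and \(Y\) Huq-commute exactly when \(\bar X \omeet \bar Y = 0\). Chaining the equivalences gives \([X,Y]=0\) if and only if \(\bar X\omeet\bar Y=0\). This is condition (1), so the corollary follows.

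There is no real obstacle here; all the substance is in Proposition~\ref{proposition elhslat huq}, which rests on the normality of centralisers (Proposition~\ref{proposition centraliser}). The one point to check is that the arithmeticity used inside Proposition~\ref{proposition elhslat huq} (via Proposition~\ref{proposition commutator is intersection}) is already known independently for \(\ELHSLat\), from the theorem in the introduction. This ensures the argument is not circular with the conclusion of Theorem~\ref{theorem arithmetical and w}.
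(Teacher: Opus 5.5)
Your proposal is correct and is essentially the paper's own argument. The paper gives no written proof, but its intended proof chains Proposition~\ref{proposition elhslat huq} with the equivalence \([X,Y]=0 \Leftrightarrow [X,Y]^{\Huq}=0\) to obtain condition~(1) of Theorem~\ref{theorem arithmetical and w}, and then reads off \W{} from condition~(2). You are also right that arithmeticity of \(\ELHSLat\) is known independently (it passes to subvarieties), so the argument is not circular.
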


In~\cite{HSLat}, we were not able to give a precise description of commutators in Heyting semilattices.
However, the situation simplifies in equationally linear Heyting semilattices.

\begin{proposition}
	Let \(H\) be an object of \(\ELHSLat\), and let \(X\), \(Y \oleq H\) be two subobjects. The Huq-commutator of \(X\) and \(Y\) is given by
	\[
		[X,Y]^{\Huq} = \bigl\lbrace (x \vee y) \vee h \;\big|\; x \in X,\, y \in Y,\, h \in H \bigr\rbrace.
	\]
\end{proposition}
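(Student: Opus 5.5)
Let \(N\) denote the set \(\{(x\vee y)\vee h \mid x\in X,\ y\in Y,\ h\in H\}\). The plan is to show three things: \(N\) is a filter, so it is normal by Lemma~\ref{Lemma Normal is Filter}; \(X\) and \(Y\) cooperate in \(H/N\); and every normal \(K\normal H\) such that \(X\) and \(Y\) cooperate in \(H/K\) contains \(N\).

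For the first step, write \(S=\{x\vee y\mid x\in X,\ y\in Y\}\). Then \(N\) is exactly the set of elements \(s\vee h\) with \(s\in S\) and \(h\in H\). To apply the preceding lemma describing normal closures, I need \(S\) to be non-empty and closed under meets. Non-emptiness holds because \(1\in S\). For meets, take \(x,x'\in X\) and \(y,y'\in Y\). By distributivity of \(\wedge\) over \(\vee\) (Proposition~\ref{proposition equivalences semi-boolean}), \((x\vee y)\wedge(x'\vee y')\) is the join of the four terms \(x\wedge x'\), \(x\wedge y'\), \(y\wedge x'\), \(y\wedge y'\). This join is \(\geq (x\wedge x')\vee(y\wedge y')\in S\). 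So the meet lies in \(\bar S\), and that is enough for my purposes, since I only need \(N=\bar S\). Alternatively I can check the filter axioms for \(N\) directly. Up-closure: if \(h'\geq s\vee h\), then \(h'=s\vee(h\vee h')\). Meets: expand \((s\vee h)\wedge(s'\vee h')\) by distributivity, which gives \(\geq(s\wedge s')\vee(\text{rest})\), and \(s\wedge s'\geq (x\wedge x')\vee(y\wedge y')\). Hence \(N\) is up-closed and meet-closed, i.e.\ a filter.

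For the second step, let \(q\colon H\to H/N\) be the quotient. I use Proposition~\ref{proposition commute in hslat}(3) in \(H/N\), which is again equationally linear, so pseudo-join is join and is preserved by \(q\). For \(x\in X\) and \(y\in Y\) we get \(q(x)\vee q(y)=q(x\vee y)=1\), because \(x\vee y\in N=\Ker q\). Here the images of \(X\) and \(Y\) are \(q(X)\) and \(q(Y)\), and the criterion is elementwise, so \(q(X)\) and \(q(Y)\) cooperate in \(H/N\). Hence \([X,Y]^{\Huq}\oleq N\).

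For minimality, let \(K\) be normal with quotient \(p\colon H\to H/K\) in which \(p(X)\) and \(p(Y)\) cooperate. Then \(p(x\vee y)=p(x)\vee p(y)=1\), so \(x\vee y\in K\). Since \(K\) is a filter and \((x\vee y)\vee h\geq x\vee y\), the whole of \(N\) lies in \(K\). This gives \(N\oleq[X,Y]^{\Huq}\).

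The main obstacle is the meet-closure of \(N\), which relies on distributivity; everything else is formal. One subtle point also needs care: the Huq commutator is characterised as the kernel of \(H\to Q\), and "cooperate in \(H/N\)" means the images cooperate. That follows from the definition of the colimit \(Q\) together with the elementwise criterion.
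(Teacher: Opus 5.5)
Your proposal is correct and is essentially the paper's proof: you check directly that $N$ is up-closed and meet-closed, using the lower bound $(x\wedge x')\vee(y\wedge y')$, and you use the elementwise cooperation criterion to identify $[X,Y]^{\Huq}$ with the smallest filter containing every $x\vee y$. The one flaw is your first route through the normal-closure lemma: its hypothesis that $S=\{x\vee y\}$ is closed under meets can fail, and you only show that meets of elements of $S$ lie above elements of $S$; your direct verification of the filter axioms covers this, so that detour is unnecessary.
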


\begin{proof}
	By definition, together with Proposition~\ref{proposition commute in hslat}, the Huq-commutator must be the smallest normal subobject of \(H\) containing all elements of the form \(x \vee y\) with \(x \in X\) and \(y \in Y\).

	First, observe that it is indeed a filter. It is clearly up-closed: if \(h' \in H\) satisfies \(h' \geq (x \vee y) \vee h\) for some \(x \in X\), \(y \in Y\) and \(h \in H\), then
	\[
		h' = (x \vee y) \vee (h \vee h'),
	\]
	so \(h' \in [X,Y]^{\Huq}\).

	To see that it is closed under meets, take any \(x\), \(x' \in X\), \(y\), \(y' \in Y\) and \(h\), \(h' \in H\). Using the distributivity from Proposition~\ref{proposition equivalences semi-boolean}, we compute:
	\begin{align*}
		(x \vee y \vee h) \wedge (x' \vee y' \vee h') =
		 & \;(x \wedge x') \vee (x \wedge y') \vee (x \wedge h') \vee (y \wedge x') \vee (y \wedge y') \\ & \vee (y \wedge h')
		\vee (h \wedge x') \vee (h \wedge y') \vee (h \wedge h').
	\end{align*}
	This element is clearly greater or equal to \((x \wedge x') \vee (y \wedge y')\), which itself belongs to \([X,Y]^{\Huq}\). Since \([X,Y]^{\Huq}\) is up-closed, the whole meet lies in \([X,Y]^{\Huq}\).

	Finally, suppose that \(K\) is a normal subobject of \(H\) containing every \(x \vee y\) with \(x \in X\) and \(y \in Y\). Then for any \(h \in H\),
	\[
		(x \vee y) \vee h \ge x \vee y,
	\]
	and since \(K\) is up-closed, we have \((x \vee y) \vee h \in K\). Hence \(K\) contains \([X,Y]^{\Huq}\).
\end{proof}

\section{Algebraic coherence}\label{section ac}

In the context of a semi-abelian category, the condition called \emph{algebraic coherence} \AC{} introduced in~\cite{acc} has strong categorical-algebraic consequences. Nevertheless, many semi-abelian categories do satisfy it: all varieties of groups, Lie algebras, Poisson algebras, rings without unit, associative algebras, the category of cocommutative Hopf algebras over a field of characteristic~0, all abelian categories, etc. Non-examples include loops and digroups, non-associative rings, \(n\)-Lie algebras, and Jordan algebras.

In~\cite{HSLat}, we proved that \(\HSLat\) is not an algebraically coherent category. The goal of this section is to show that this changes for the category \(\ELHSLat\).

Recall from~\cite{acc} that a category with finite limits is called \defn{algebraically coherent} if and only if, for each morphism \(p\colon E \to B\), the change-of-base functor \(p^*\) is coherent, meaning that it preserves finite limits and jointly strongly epimorphic pairs. It was shown there that regular functors between regular categories are coherent if and only if they preserve binary joins. Moreover, in the case of pointed protomodular categories, algebraic coherence is equivalent to requiring all kernel functors \(\Ker_B\) to be coherent. Therefore, we obtain the following theorem, which is closely related to Theorem~3.15 in~\cite{AlgebraicLogoi}:

\begin{theorem}\label{theorem equivalences ac}
	Let \(\C\) be a semi-abelian category. The following are equivalent:
	\begin{tfae}
		\item \(\C\) is algebraically coherent;
		\item for any object \(B\in \C\), the kernel functor \(\Ker_B \colon \Pt_B(\C) \rightarrow \C\) preserves binary joins;
		\item for any cospan of sub-split-extensions of the form
		\begin{center}
			\begin{tikzcd}[column sep=large,row sep=large]
				X_1 \arrow[r,"\kappa_1"] \arrow[d,"u_1",',tail]
				& A_1 \arrow[r,shift left,"\alpha_1"] \arrow[d,"v_1",',tail]
				& B \arrow[l,shift left,"\beta_1"] \arrow[d,equal] \\
				X \arrow[r,"\kappa"]
				& A \arrow[r,shift left,"\alpha"]
				& B \arrow[l,shift left,"\beta"] \\
				X_2 \arrow[r,"\kappa_2"'] \arrow[u,"u_2",tail]
				& A_2 \arrow[r,shift left,"\alpha_2"] \arrow[u,"v_2",tail]
				& B \arrow[l,shift left,"\beta_2"] \arrow[u,equal]
			\end{tikzcd}
		\end{center}
		with \(A=A_1\ojoin A_2\), we have that \((X\omeet A_1)\ojoin (X\omeet A_2) =X\).\noproof
	\end{tfae}

\end{theorem}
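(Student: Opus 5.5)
The plan is to obtain (i)\(\Leftrightarrow\)(ii) from the general results of~\cite{acc} recalled just before the statement, and (ii)\(\Leftrightarrow\)(iii) by unwinding what binary joins look like in \(\Pt_B(\C)\). For (i)\(\Leftrightarrow\)(ii), I will use that in a pointed protomodular category algebraic coherence amounts to coherence of every kernel functor \(\Ker_B\). Then I will use that a regular functor between regular categories is coherent precisely when it preserves binary joins of subobjects. So three things must be checked.
\begin{enumerate}
	\item \(\Pt_B(\C)\) is regular. Finite limits and regular epimorphisms in \(\Pt_B(\C)\) are computed on the total objects, and \(\C\) is regular.
	\item \(\Ker_B\) preserves finite limits. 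This holds because kernels are limits, and limits commute with limits.
	\item \(\Ker_B\) preserves regular epimorphisms.
\end{enumerate}

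Item 3 is the only point needing an argument, and I expect it to be the main obstacle. Take a regular epimorphism \(f\colon (A,\alpha,\beta)\to(A',\alpha',\beta')\) of points, with \(\alpha' f=\alpha\), and restrict it to the kernels, \(f|\colon X\to X'\). I would argue via the \(3\times 3\) lemma (or the regular-image version of the short five lemma) in the homological category \(\C\). Both rows are split short exact sequences over the same base \(B\), and \(1_B\) is an isomorphism. Factor \(f|\) as a regular epimorphism followed by a monomorphism \(m\colon I\rightarrowtail X'\). The image of \(f\) then contains both \(I\) and \(\beta'(B)\), which jointly cover \(A'\) by protomodularity (compare Remark~\ref{Remark Split Extension}). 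Hence \(m\) is an isomorphism, and \(f|\) is a regular epimorphism. In a semi-abelian category this is standard, so I would cite it rather than reprove it.

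For (ii)\(\Leftrightarrow\)(iii), I identify the subobjects of a point \((A,\alpha,\beta)\) in \(\Pt_B(\C)\) with the subobjects \(A_i\oleq A\) containing \(\beta(B)\). Each such \(A_i\) inherits a split epimorphism to \(B\) by restriction, and its kernel is \(X\omeet A_i\) (a pullback). The join in \(\Pt_B(\C)\) of two such subpoints is their join \(A_1\ojoin A_2\) in \(\Sub(A)\), since this join still contains \(\beta(B)\). Hence \(\Ker_B\) preserves binary joins exactly when
\[
	(X\omeet A_1)\ojoin(X\omeet A_2)=X\omeet(A_1\ojoin A_2)
\]
for all such cospans. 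Condition (iii) is then the special case \(A=A_1\ojoin A_2\). Conversely, given arbitrary subpoints \(A_1,A_2\) of \(A\), I replace \(A\) by the subpoint \(A'=A_1\ojoin A_2\), whose kernel is \(X\omeet A'\), and apply (iii) to the cospan into \(A'\). This yields the displayed equality, because \((X\omeet A')\omeet A_i=X\omeet A_i\). Finally, the inclusion \(\supseteq\) in (iii) is automatic, so only \(\oleq\) carries content; this is the form that will be verified elementwise for \(\ELHSLat\) later.
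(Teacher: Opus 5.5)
Your route is the paper's route. The paper just invokes the two facts from~\cite{acc}: coherence of a regular functor between regular categories amounts to preserving binary joins, and in the pointed protomodular case \AC{} amounts to coherence of every \(\Ker_B\). It leaves the rest implicit. Your identification of subpoints with subobjects of \(A\) containing \(\beta(B)\), and your reduction of the general join condition to the case \(A=A_1\ojoin A_2\), correctly fill in (ii)\(\Leftrightarrow\)(iii).

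The one step that does not work as written is your sketch for item~3. The equality \(I\ojoin\beta'(B)=A'\) does hold, since images along \(f\) preserve joins and \(X\ojoin\beta(B)=A\). From it you cannot conclude \(I=X'\). For a counterexample in \(\Grp\), take \(B=C_2\) acting on \(C_2\times C_2\) by swapping the factors, let \(A'\) be the semidirect product, and let \(I\) be the first factor. Then \(I\) and \(\beta'(B)\) generate \(A'\), yet \(I\neq X'\). That pattern of reasoning proves the converse implication (regular epi on kernels gives regular epi in the middle); here you would additionally need \(I\) to be normal in \(A'\).

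You do not need protomodularity for this step at all. Since \(\alpha=\alpha' f\), the kernel \(X\) is the pullback \(f^{-1}(X')\). So \(f|\colon X\to X'\) is the pullback of the regular epimorphism \(f\) along \(X'\rightarrowtail A'\), hence a regular epimorphism because \(\C\) is regular. With that replacement your proposal is complete.
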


Our strategy in this section is as follows. We mainly rely on the third item of
Theorem~\ref{theorem equivalences ac} and first show that it holds in \(\ELHSLat\)
in the special case where the object \(B\) is the two-element set \(\{b,1\}\).
We then extend the argument to arbitrary finite equationally linear Heyting
semilattices \(B\), and finally we treat the general case.

In what follows, we write \(h \lhd h' \coloneq (h' \iimplies h)\iimplies h'\).
The next lemma provides identities that will be used throughout this section;
its proof is postponed to Appendix~\ref{Appendix}.

\begin{lemma}\label{Equations}
	The following identities hold for any equationally linear Heyting semilattice:
	\begin{enumerate}
		\item \(b\lhd(x\iimplies y) = (b\lhd x) \iimplies (b\lhd y)\),
		\item \(b\lhd(x\wedge y) = (b\lhd x) \wedge (b \lhd y)\),
		\item \(b\iimplies x = (b\vee x) \iimplies (b\lhd x)\),
		\item \((x\iimplies b) \iimplies b = ((b \lhd  x) \iimplies (b \vee x))\iimplies (b \vee x)\),
		\item \(b\vee (x\iimplies y) = (((b\lhd x) \iimplies (b\lhd y)) \iimplies (b \vee x)) \iimplies (((b\lhd y) \iimplies (b\lhd x))\iimplies (b \vee y))\),
		\item \((x\iimplies b) \iimplies y = (((b \lhd  x) \iimplies (b \vee y))\iimplies (b \vee y)) \wedge ((b \vee y) \iimplies (b\lhd y))\).
	\end{enumerate}
\end{lemma}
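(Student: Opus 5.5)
The plan is to reduce all six identities to linearly ordered Heyting semilattices and then check them there by case analysis. By the Nemitz--Whaley theorem \cite[Theorem~3.3]{NeWh1}, together with Köhler's characterisation, \(\ELHSLat\) is the variety generated by the linearly ordered Heyting semilattices. Since an identity holds in a variety exactly when it holds in a generating class, it suffices to verify each identity in an arbitrary chain \(S\) with top \(1\). There \(\wedge=\min\), \(\vee=\max\), and \(x\iimplies y\) equals \(1\) if \(x\leq y\) and \(y\) otherwise.

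The first step is to compute the auxiliary operation \(b\lhd x=(x\iimplies b)\iimplies x\) in a chain. If \(x\leq b\), then \(x\iimplies b=1\), so \(b\lhd x = 1\iimplies x = x\). If \(x> b\), then \(x\iimplies b = b\), and since \(b\leq x\) we get \(b\lhd x = b\iimplies x = 1\). Hence \(b\lhd x\) equals \(x\) when \(x\leq b\) and \(1\) otherwise. In words, \(b\lhd(-)\) keeps the part of the chain below \(b\) and collapses everything above \(b\) to \(1\). Dually, \(b\vee x\) equals \(b\) when \(x\leq b\) and \(x\) otherwise.

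Next I will check each identity by splitting into cases according to where the variables sit relative to \(b\) and to each other. For (1) and (2), I will note that \(x\mapsto b\lhd x\) is the quotient map onto the chain \(\{s\leq b\}\cup\{1\}\), with the filter \(\{s> b\}\cup\{1\}\) collapsed. It is therefore a homomorphism, which gives (1) and (2) at once; a direct three-case check would also do. For (3), I consider \(x\leq b\), where both sides equal \(b\iimplies x\), and \(x>b\), where the left side is \(1\) and the right side is \(x\iimplies 1=1\). For (4), I treat \(x\leq b\) and \(x>b\) separately, in the same way.

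The main obstacle is (5) and (6), which involve three variables and nested implications. For these I would organise the cases by the positions of \(x\) and \(y\) relative to \(b\): both \(\leq b\), exactly one \(\leq b\), or both \(>b\). Within each case I further split on whether \(x\leq y\). This gives at most six subcases per identity. In each subcase the substitutions \(b\lhd x\in\{x,1\}\) and \(b\vee x\in\{b,x\}\) reduce both sides to comparisons in a chain of at most three elements, which can be computed directly. I expect the cases with exactly one variable above \(b\) to be where errors creep in, for example in (6) with \(x\leq b<y\). I would double-check those cases, including the degenerate equalities \(x=b\) and \(y=b\), since there \(x\iimplies b=1\) and the strict and non-strict cases behave differently.
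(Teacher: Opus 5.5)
Your proposal is correct, but it takes a different route from the paper. Both arguments reduce the identities to chains. The paper does this internally, via Freyd's localisation argument in Appendix~\ref{Appendix}: finitely generated algebras embed into products of their localisations, these localisations are chains, and for three generators one may restrict to chains with at most four elements and to valuations sending no generator to the top. It then checks the resulting thirteen valuations of \((b,x,y)\) in tables. You instead use the generation theorem of Nemitz--Whaley~\cite{NeWh1} together with Köhler's identity, and you replace the tables by the closed forms \(b\lhd x\in\{x,1\}\) and \(b\vee x\in\{b,x\}\) in an arbitrary chain. Two points should be stated explicitly. First, Freyd's identity is equivalent to Köhler's. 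Second, \(\vee\) abbreviates the pseudo-join term, which evaluates to the maximum in a chain; this is what makes (3)--(6) genuine equations in \(1\), \(\wedge\), \(\iimplies\), so that Birkhoff's theorem applies. The case checks for (5) and (6) that you left to be done do go through in all six subcases, including \(x=b\) or \(y=b\). For instance, in (5) both sides equal \(1\) when \(x\leq b<y\) and equal \(b\) when \(y\leq b<x\). In (6) both sides equal \(b\iimplies y\) when \(y\leq b<x\) and equal \(1\) when \(x,y>b\). Your approach is uniform over all chains, so you never need to bound chain lengths or justify which valuations suffice, steps the paper only sketches. Your observation that \(b\lhd(-)\) collapses the filter above \(b\) also explains conceptually why (1) and (2) hold: together with \(b\lhd 1=1\), they say that \(b\lhd(-)\) is an endomorphism. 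The paper's approach, on the other hand, does not rely on the Nemitz--Whaley and Köhler theorems. It also yields local finiteness of \(\ELHSLat\) as a by-product, which is needed in the proof of Theorem~\ref{theorem AC for elhslat}.
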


\subsection*{Algebraic coherence over the two-element set}

Our main aim now is to show that, in \(\ELHSLat\), the kernel functor \(\Ker_B\) is coherent whenever \(B=\{b,1\}\).
We consider a cospan of sub--split extensions as in Theorem~\ref{theorem equivalences ac}, with \(B=\{b,1\}\), and we write
\(X_i \coloneq X \omeet A_i\) for \(i=1,2\).
To prove Proposition~\ref{theorem: algebraic coherence for 2}, we proceed by establishing a series of intermediate lemmas.

\begin{lemma}\label{lemma: useful terms}
	For each \(x \in X_1 \ojoin X_2\), the elements \(b \lhd x\) and \(x \vee b\) belong to \(X_1 \ojoin X_2\).
\end{lemma}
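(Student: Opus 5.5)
The plan is a structural induction over the subalgebra \(X_1\ojoin X_2\). Since \(X_1 \ojoin X_2\) is the join of subobjects in a variety, it is the subalgebra of \(A\) generated by \(X_1\) and \(X_2\). Its elements are therefore obtained from elements of \(X_1\ojoin X_2\)'s generators by repeatedly applying \(1\), \(\wedge\) and \(\iimplies\). Accordingly, I would consider the set
\[
	S \coloneq \{\, x \in X_1\ojoin X_2 \mid b\lhd x \in X_1\ojoin X_2 \text{ and } b\vee x \in X_1\ojoin X_2 \,\}
\]
and show that it contains \(X_1\) and \(X_2\) and is a subalgebra. It then follows that \(S = X_1\ojoin X_2\), which is the claim.

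\emph{Generators.} Let \(x\in X_1 = X\omeet A_1\). Since \(B\) is included in \(A_1\) via \(\beta_1\), both \(b\) and \(x\) lie in the subalgebra \(A_1\). Hence the terms \(b\lhd x = (x\iimplies b)\iimplies x\) and \(b\vee x\) also lie in \(A_1\); recall that \(\vee\) is a term, namely the pseudo-join. To see that they lie in \(X=\Ker\alpha\), I use \(\alpha(x)=1\) and \(\alpha(b)=b\). This gives
\[
	\alpha(b\lhd x) = (1\iimplies b)\iimplies 1 = 1 \qquad\text{and}\qquad \alpha(b\vee x) = b\vee 1 = 1 .
\]
Thus both elements are in \(X\omeet A_1 = X_1 \oleq X_1\ojoin X_2\). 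The same argument applies to \(X_2\).

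\emph{Closure.} For the constant, \(b\lhd 1 = (1\iimplies b)\iimplies 1 = 1\) and \(b\vee 1 = 1\), so \(1 \in S\). Now let \(x,y\in S\). For meets, Lemma~\ref{Equations}(2) gives \(b\lhd(x\wedge y) = (b\lhd x)\wedge(b\lhd y)\). Distributivity of \(\vee\) over \(\wedge\) (Proposition~\ref{proposition equivalences semi-boolean}) gives \(b\vee(x\wedge y) = (b\vee x)\wedge(b\vee y)\). In both cases the right-hand side lies in the subalgebra \(X_1\ojoin X_2\). For implications, Lemma~\ref{Equations}(1) gives \(b\lhd(x\iimplies y) = (b\lhd x)\iimplies(b\lhd y)\). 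Lemma~\ref{Equations}(5) expresses \(b\vee(x\iimplies y)\) as a term built using only \(\iimplies\) from the four elements \(b\lhd x\), \(b\lhd y\), \(b\vee x\), \(b\vee y\). All four lie in \(X_1\ojoin X_2\) by hypothesis, so \(b\vee(x\iimplies y)\) does as well.

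The key point, and the only real obstacle, is the implication case for \(b\vee(x\iimplies y)\). A naive expression would reintroduce the bare element \(b\), which is not in \(X_1\ojoin X_2\). This is exactly what the identity Lemma~\ref{Equations}(5) avoids: it rewrites \(b\vee(x\iimplies y)\) without any free occurrence of \(b\). Since that identity is established in Appendix~\ref{Appendix}, the induction goes through. Note also that \(b\vee(\,\cdot\,)\) alone would not be closed under the operations, and this is why \(b\lhd(\,\cdot\,)\) must be carried along in the induction.
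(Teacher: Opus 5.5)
Your proposal is correct and follows essentially the same argument as the paper. You consider the set of \(x \in X_1 \ojoin X_2\) with \(b \lhd x,\ b \vee x \in X_1 \ojoin X_2\), and show it is a subalgebra containing \(X_1\) and \(X_2\) using identities (1), (2) and (5) of Lemma~\ref{Equations} together with distributivity of \(\vee\) over \(\wedge\); your explicit check via \(\alpha\) that each \(X_i\) lies in this set is a detail the paper leaves implicit, and you carry it out correctly.
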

\begin{proof}
	Let
	\[
		\tilde X \coloneq \{x \in X_1 \ojoin X_2 \mid \text{\(b \lhd x\) and \(b \vee x\) belong to \(X_1 \ojoin X_2\)}\}.
	\]
	Noting that each \(X_i\) is contained in \(\tilde X\), it is sufficient to show that \(\tilde X\) is a subalgebra of \(X_1 \ojoin X_2\) since this would imply that \(\tilde X=X_1\ojoin X_2\).
	Suppose that \(x\), \(y \in \tilde X\). By Lemma~\ref{Equations}, we have
	\begin{align*}
		b\lhd (x\iimplies y) & = (b\lhd x) \iimplies (b\lhd y),                         \\
		b\lhd (x\wedge y)    & = (b\lhd x) \wedge (b\lhd y),                            \\
		b\vee (x\iimplies y) & = (((b\lhd x) \iimplies (b\lhd y)) \iimplies (b \vee x))
		\iimplies (((b\lhd y) \iimplies (b\lhd x)) \iimplies (b \vee y)),               \\
		b\vee (x\wedge y)    & = (b \vee x) \wedge (b \vee y).
	\end{align*}
	Hence \(\tilde X\) is closed under \(\wedge\) and \(\iimplies\). Since \(1 \in \tilde X\) trivially, this concludes the proof.
\end{proof}

\begin{lemma}\label{lemma: (x-> b)->b and b->x}
	For each \(x \in X_1 \ojoin X_2\), the elements \((x\iimplies b)\iimplies b\) and \(b\iimplies x\) belong to \(X_1 \ojoin X_2\).
\end{lemma}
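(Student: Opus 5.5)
This lemma should follow directly from Lemma~\ref{lemma: useful terms} combined with items (3) and (4) of Lemma~\ref{Equations}. There is no need to set up another subalgebra argument; we simply exhibit both elements as terms in elements already known to lie in \(X_1 \ojoin X_2\).

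Fix \(x \in X_1 \ojoin X_2\). By Lemma~\ref{lemma: useful terms}, both \(b \lhd x\) and \(b \vee x\) belong to \(X_1 \ojoin X_2\). Item (3) of Lemma~\ref{Equations} gives
\[
	b \iimplies x = (b \vee x) \iimplies (b \lhd x).
\]
Item (4) gives
\[
	(x \iimplies b) \iimplies b = \bigl((b \lhd x) \iimplies (b \vee x)\bigr) \iimplies (b \vee x).
\]
The right-hand sides involve only \(b \lhd x\), \(b \vee x\) and the operation \(\iimplies\). Since \(X_1 \ojoin X_2\) is a subobject, hence a subalgebra of \(X\) (and of \(A\)), it is closed under \(\iimplies\). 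Therefore both right-hand sides lie in \(X_1 \ojoin X_2\), and so do \(b \iimplies x\) and \((x \iimplies b) \iimplies b\).

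The only real content is the pair of identities (3) and (4), which are deferred to the Appendix. Within the present argument there is no obstacle beyond checking that the identities are applied with the roles of \(b\) and \(x\) as stated. One should also note that \(b\) itself need not lie in \(X_1 \ojoin X_2\). That is precisely why the identities are needed: they rewrite the expressions so that \(b\) appears only inside the combinations \(b \lhd x\) and \(b \vee x\).
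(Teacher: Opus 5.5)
Your proof is correct and is essentially the paper's own argument. You rewrite \(b\iimplies x\) and \((x\iimplies b)\iimplies b\) via identities (3) and (4) of Lemma~\ref{Equations}, then conclude by Lemma~\ref{lemma: useful terms} and closure of \(X_1\ojoin X_2\) under \(\iimplies\).
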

\begin{proof}
	By equations~(3) and~(4) of Lemma~\ref{Equations}, we have
	\begin{align*}
		(x\iimplies b) \iimplies b & = ((b \lhd x) \iimplies (b \vee x)) \iimplies (b \vee x), \\
		b\iimplies x               & = (b \vee x) \iimplies (b \lhd x).
	\end{align*}
	The conclusion then follows from Lemma~\ref{lemma: useful terms}.
\end{proof}

\begin{lemma}\label{lemma: (x->b)->y}
	For each \(x\), \(y \in X_1 \ojoin X_2\), the element \((x\iimplies b)\iimplies y\) belongs to \(X_1 \ojoin X_2\).
\end{lemma}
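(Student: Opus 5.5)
The plan is to proceed exactly as in Lemma~\ref{lemma: (x-> b)->b and b->x}: rewrite the element \((x\iimplies b)\iimplies y\) as a term built only from elements already known to lie in \(X_1 \ojoin X_2\), using the operations \(\wedge\) and \(\iimplies\). The relevant rewriting is identity~(6) of Lemma~\ref{Equations}:
\[
	(x\iimplies b) \iimplies y = \bigl(((b \lhd x) \iimplies (b \vee y))\iimplies (b \vee y)\bigr) \wedge \bigl((b \vee y) \iimplies (b\lhd y)\bigr).
\]

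First, I will apply Lemma~\ref{lemma: useful terms} to \(x\) and to \(y\). For \(x, y \in X_1 \ojoin X_2\), this gives that the four elements \(b\lhd x\), \(b\lhd y\), \(b\vee x\) and \(b \vee y\) all belong to \(X_1 \ojoin X_2\).

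Second, I will use that \(X_1 \ojoin X_2\) is a subobject, that is, a subalgebra of \(A\). It is therefore closed under \(\iimplies\) and \(\wedge\).

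Third, I will read off the right-hand side of identity~(6) step by step:
\begin{itemize}
	\item \((b\lhd x)\iimplies(b\vee y)\) lies in \(X_1 \ojoin X_2\);
	\item hence so does \(((b\lhd x)\iimplies(b\vee y))\iimplies(b\vee y)\);
	\item separately, \((b\vee y)\iimplies(b\lhd y)\) lies in \(X_1 \ojoin X_2\);
	\item so their meet lies in \(X_1 \ojoin X_2\).
\end{itemize}
Identity~(6) then shows that \((x\iimplies b)\iimplies y\) is in \(X_1 \ojoin X_2\).

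There is no genuine obstacle here. All the difficulty has been displaced into identity~(6), whose verification in \(\ELHSLat\) is postponed to Appendix~\ref{Appendix}, and into Lemma~\ref{lemma: useful terms}. The only point to check is that every subterm on the right-hand side involves \(b\) only through the combinations \(b\lhd(-)\) and \(b\vee(-)\) applied to \(x\) or \(y\), and never through \(b\) on its own; \(b\) itself need not lie in \(X_1\ojoin X_2\). Inspecting the displayed identity confirms this.
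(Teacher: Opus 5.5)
Your proposal is correct and is essentially the paper's own proof: rewrite \((x\iimplies b)\iimplies y\) using identity~(6) of Lemma~\ref{Equations}, then conclude from Lemma~\ref{lemma: useful terms} and the closure of the subalgebra \(X_1\ojoin X_2\) under \(\wedge\) and \(\iimplies\). Only \(b\lhd x\), \(b\lhd y\) and \(b\vee y\) actually occur in the right-hand side of~(6), so mentioning \(b\vee x\) is unnecessary but harmless.
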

\begin{proof}
	By equation~(6) of Lemma~\ref{Equations}, we have
	\begin{align*}
		(x\iimplies b) \iimplies y
		 & = \bigl((b \lhd x) \iimplies (b \vee y)\bigr)\iimplies (b \vee y)
		\;\wedge\; \bigl((b \vee y) \iimplies (b \lhd y)\bigr).
	\end{align*}
	The conclusion then follows from Lemma~\ref{lemma: useful terms}.
\end{proof}

We denote by \(\tilde A\) the subset
\[
	\tilde A \coloneq
	\{\, a \in A \mid \text{\(a \in X_1 \ojoin X_2\) or \(a = x \wedge (y \iimplies b)\) for some \(x\), \(y \in X_1 \ojoin X_2\)}\}\text{.}
\]
Our aim is to show that \(\tilde A = A\), which will immediately imply
Proposition~\ref{theorem: algebraic coherence for 2}, as explained in its proof.

\begin{lemma}\label{lemma: meet}
	\(\tilde A\) is closed under finite meets.
\end{lemma}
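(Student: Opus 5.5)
The plan is a case analysis on the two possible shapes of elements of \(\tilde A\), reducing every meet to one of these shapes. Throughout, \(X_1 \ojoin X_2\) is the join of subobjects of \(A\), that is, the subalgebra of \(A\) generated by \(X_1\) and \(X_2\). In particular it is closed under \(\wedge\), under \(\iimplies\), and under every term operation; it also contains \(1\), so the empty meet lies in \(\tilde A\). It therefore suffices to show that \(\tilde A\) is closed under binary meets, and I would check the three cases in turn.

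\emph{Case 1: both elements lie in \(X_1 \ojoin X_2\).} Take \(a, a' \in X_1 \ojoin X_2\). Their meet lies in \(X_1 \ojoin X_2\) because this is a subalgebra, so \(a \wedge a' \in \tilde A\).

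\emph{Case 2: mixed.} Take \(a \in X_1 \ojoin X_2\) and \(a' = x \wedge (y \iimplies b)\) with \(x, y \in X_1 \ojoin X_2\). By associativity and commutativity of \(\wedge\),
\[
	a \wedge a' = (a \wedge x) \wedge (y \iimplies b).
\]
Here \(a \wedge x \in X_1 \ojoin X_2\), so \(a \wedge a'\) again has the required form.

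\emph{Case 3: both elements of the second shape.} Take \(a = x \wedge (y \iimplies b)\) and \(a' = x' \wedge (y' \iimplies b)\). Regrouping gives
\[
	a \wedge a' = (x \wedge x') \wedge \bigl((y \iimplies b) \wedge (y' \iimplies b)\bigr).
\]
We are in \(\ELHSLat\), where every algebra is semi-Boolean. So item~(6) of Proposition~\ref{proposition equivalences semi-boolean} applies and gives
\[
	(y \iimplies b) \wedge (y' \iimplies b) = (y \vee y') \iimplies b .
\]
It remains to see that \(y \vee y' \in X_1 \ojoin X_2\). This holds because \(y \vee y' = y \psj y'\) is given by the term \(((y \iimplies y') \iimplies y') \wedge ((y' \iimplies y) \iimplies y)\), and \(X_1 \ojoin X_2\) is closed under term operations. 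Hence
\[
	a \wedge a' = (x \wedge x') \wedge \bigl((y \vee y') \iimplies b\bigr)
\]
with \(x \wedge x'\) and \(y \vee y'\) in \(X_1 \ojoin X_2\), so \(a \wedge a' \in \tilde A\).

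The only step that needs any care is Case~3. There one must collapse two factors of the form \((\,\cdot \iimplies b)\) into a single one, and this is exactly where semi-Booleanity, that is, the equational linearity, is used. Everything else is routine bookkeeping with associativity and commutativity of \(\wedge\).
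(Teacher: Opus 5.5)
Your proposal is correct and follows the paper's proof. The paper likewise reduces everything to the case \(a = x \wedge (y \iimplies b)\), \(a' = x' \wedge (y' \iimplies b)\) and rewrites \(a \wedge a' = (x \wedge x') \wedge \bigl((y \vee y') \iimplies b\bigr)\) using semi-Booleanity; you additionally write out the trivial cases and justify explicitly why \(y \vee y'\) lies in \(X_1 \ojoin X_2\), both of which the paper leaves implicit.
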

\begin{proof}
	Suppose that \(a\), \(a' \in \tilde A\). The only case that requires attention is when
	\[
		a = x \wedge (y \iimplies b)
		\quad\text{and}\quad
		a' = x' \wedge (y' \iimplies b).
	\]
	In this case we have
	\[
		a \wedge a' = (x \wedge x') \wedge \bigl((y \vee y')\iimplies b\bigr),
	\]
	which belongs to \(\tilde A\) since \(X_1 \ojoin X_2\) is closed under \(\wedge\) and \(\vee\).
\end{proof}

\begin{remark}
	This lemma highlights the importance of \(B\) being the two-element set.
	If \(a' = x' \wedge (y' \iimplies b')\) for some other \(b' \in B\),
	we could not rewrite \(a \wedge a'\) in the desired form.
\end{remark}

\begin{lemma}\label{lemma x -> a and a->x}
	For \(a \in \tilde A\) and \(x \in X_1 \ojoin X_2\), both \(x \iimplies a\) and \(a \iimplies x\) belong to \(\tilde A\).
\end{lemma}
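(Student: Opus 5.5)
The plan is a case split on the form of \(a\in\tilde A\). Both cases will be handled using only the Heyting semilattice laws (distributivity of \(\iimplies\) over \(\wedge\) in its second argument, and currying) together with Lemma~\ref{lemma: (x->b)->y}. Throughout, write \(X' \coloneq X_1 \ojoin X_2\). This is a subalgebra of \(X\), hence of \(A\), and is closed under \(\wedge\), \(\iimplies\) and \(1\). By Lemma~\ref{lemma: useful terms} and the fact that \(\vee\) is a term, it is also closed under \(\vee\). We identify \(b\) with \(\beta(b)\in A\).

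First case: \(a\in X'\). Then \(x\iimplies a\) and \(a\iimplies x\) lie in \(X'\subseteq\tilde A\), because \(X'\) is a subalgebra. Nothing more is needed here.

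Second case: \(a = y\wedge(z\iimplies b)\) with \(y,z\in X'\). For \(x\iimplies a\), I use that implication distributes over meets in its second argument and the currying identity \(x\iimplies(z\iimplies b) = (x\wedge z)\iimplies b\). This gives
\[
x\iimplies a = (x\iimplies y)\wedge\bigl((x\wedge z)\iimplies b\bigr).
\]
Since \(x\iimplies y\) and \(x\wedge z\) lie in \(X'\), this element has the required form \(x''\wedge(y''\iimplies b)\), so it lies in \(\tilde A\). For \(a\iimplies x\), currying gives
\[
\bigl(y\wedge(z\iimplies b)\bigr)\iimplies x = y\iimplies\bigl((z\iimplies b)\iimplies x\bigr).
\]
By Lemma~\ref{lemma: (x->b)->y}, the element \((z\iimplies b)\iimplies x\) belongs to \(X'\). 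Hence \(y\iimplies\bigl((z\iimplies b)\iimplies x\bigr)\in X'\subseteq\tilde A\).

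The only substantive input is Lemma~\ref{lemma: (x->b)->y}, which absorbs all the work with the identities of Lemma~\ref{Equations}. Once that lemma is available, the argument is routine. The one point to double-check is that the currying identity \((u\wedge v)\iimplies w = u\iimplies(v\iimplies w)\) holds in any Heyting semilattice. It does, since both sides are right adjoint to the same meet by the adjunction \(x\wedge y\leq z \iff x\leq y\iimplies z\).
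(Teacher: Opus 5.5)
Your proof is correct and follows the paper's argument essentially line for line. It uses the same case split on the form of \(a\), the same currying step reducing \(a \iimplies x\) to \(y \iimplies ((z \iimplies b) \iimplies x)\) handled by Lemma~\ref{lemma: (x->b)->y}, and the same rewriting \(x \iimplies a = (x \iimplies y) \wedge ((x \wedge z) \iimplies b)\). The aside invoking Lemma~\ref{lemma: useful terms} for closure of \(X_1 \ojoin X_2\) under \(\vee\) is unnecessary, since that is automatic for a subalgebra, but it plays no role in the argument.
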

\begin{proof}
	If \(a \in X_1 \ojoin X_2\), then \(x \iimplies a\) and \(a \iimplies x\) clearly belong to \(X_1 \ojoin X_2\).

	Now, suppose that \(a = x' \wedge (x'' \iimplies b)\) for some \(x',x'' \in X_1 \ojoin X_2\). Then
	\[
		a \iimplies x = x' \iimplies \bigl((x'' \iimplies b)\iimplies x\bigr),
	\]
	which lies in \(X_1 \ojoin X_2\) by Lemma~\ref{lemma: (x->b)->y}. On the other hand,
	\[
		x \iimplies a = (x \iimplies x') \wedge \bigl((x \wedge x'')\iimplies b\bigr),
	\]
	which belongs to \(\tilde A\) by definition.
\end{proof}

We note that the identity \((x \wedge y) \iimplies z = x \iimplies (y \iimplies z)\) was used twice in the previous proof; it is easily verified in any Heyting semilattice. We use it again in order to prove the two following lemmas.

\begin{lemma}\label{lemma: a-->b}
	For any \(a \in \tilde A\), the element \(a \iimplies b\) belongs to \(\tilde A\).
\end{lemma}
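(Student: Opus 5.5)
The plan is to use the definition of \(\tilde A\) and split into its two cases: either \(a \in X_1 \ojoin X_2\), or \(a = x \wedge (y \iimplies b)\) for some \(x\), \(y \in X_1 \ojoin X_2\). In each case I would rewrite \(a \iimplies b\) in one of the two forms that define membership in \(\tilde A\). Throughout I use that \(X_1 \ojoin X_2\) is a subalgebra of \(A\), so it contains \(1\) and is closed under \(\wedge\) and \(\iimplies\).

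\emph{First case:} \(a \in X_1 \ojoin X_2\). Here nothing needs computing. Since \(1 \in X_1 \ojoin X_2\), we can write
\[
	a \iimplies b = 1 \wedge (a \iimplies b),
\]
which is of the form \(x \wedge (y \iimplies b)\) with \(x = 1\) and \(y = a\), both in \(X_1 \ojoin X_2\). Hence \(a \iimplies b \in \tilde A\) by definition.

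\emph{Second case:} \(a = x \wedge (y \iimplies b)\). Using the identity \((x \wedge y) \iimplies z = x \iimplies (y \iimplies z)\), valid in any Heyting semilattice, I would rewrite
\[
	a \iimplies b = x \iimplies \bigl((y \iimplies b) \iimplies b\bigr).
\]
By Lemma~\ref{lemma: (x-> b)->b and b->x}, the element \((y \iimplies b) \iimplies b\) belongs to \(X_1 \ojoin X_2\). Because \(X_1 \ojoin X_2\) is closed under implication and \(x\) lies in it, \(a \iimplies b\) lies in \(X_1 \ojoin X_2 \subseteq \tilde A\).

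There is no real obstacle here. The only nontrivial input is Lemma~\ref{lemma: (x-> b)->b and b->x}, which is where equational linearity enters through equation~(4) of Lemma~\ref{Equations}; the rest is the currying identity. I would still check that this currying step is applied in the right direction, namely that it converts the implication out of a meet into an iterated implication, so that the earlier lemma applies verbatim.
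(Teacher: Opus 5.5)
Your proof is correct and is essentially the paper's own argument. The case \(a \in X_1 \ojoin X_2\) is handled by writing \(a \iimplies b = 1 \wedge (a \iimplies b)\), and the case \(a = x \wedge (y \iimplies b)\) by currying to \(x \iimplies \bigl((y \iimplies b) \iimplies b\bigr)\) and invoking the earlier lemma that \((y \iimplies b) \iimplies b \in X_1 \ojoin X_2\); your currying step is applied in the correct direction.
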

\begin{proof}
	If \(a \in X_1 \ojoin X_2\), then
	\[
		a \iimplies b = 1 \wedge (a \iimplies b),
	\]
	which belongs to \(\tilde A\).
	Suppose now that \(a = x \wedge (y \iimplies b)\) for some \(x\), \(y \in X_1 \ojoin X_2\).
	Then
	\[
		a \iimplies b = x \iimplies \bigl((y \iimplies b)\iimplies b\bigr),
	\]
	which lies in \(X_1 \ojoin X_2\) by Lemma~\ref{lemma: (x-> b)->b and b->x}.
\end{proof}

\begin{lemma}\label{lemma: implication}
	\(\tilde A\) is closed under implications.
\end{lemma}
\begin{proof}
	Suppose that \(a\), \(a' \in \tilde A\). If \(a' \in X_1 \ojoin X_2\), then \(a \iimplies a'\) belongs to \(\tilde A\) by Lemma~\ref{lemma x -> a and a->x}. If \(a' = x \wedge (y \iimplies b)\) for some \(x\), \(y\in X_1\ojoin X_2\), then
	\[
		a \iimplies a' = (a \iimplies x) \wedge \bigl(y \iimplies (a \iimplies b)\bigr).
	\]
	By Lemma~\ref{lemma: a-->b}, the element \(a \iimplies b\) belongs to \(\tilde A\).
	Lemma~\ref{lemma x -> a and a->x} then implies that both
	\(a \iimplies x\) and \(y \iimplies (a \iimplies b)\) lie in \(\tilde A\).
	Finally, Lemma~\ref{lemma: meet} shows that \(a \iimplies a'\) belongs to~\(\tilde A\).
\end{proof}

\begin{proposition}\label{theorem: algebraic coherence for 2}
	In the category \(\ELHSLat\), for any cospan of sub-split-extensions of the form
	\begin{center}
		\begin{tikzcd}[column sep=large,row sep=large]
			X_1 \arrow[r,"\kappa_1"] \arrow[d,"u_1",',tail]
			& A_1 \arrow[r,shift left,"\alpha_1"] \arrow[d,"v_1",',tail]
			& B \arrow[l,shift left,"\beta_1"] \arrow[d,equal] \\
			X \arrow[r,"\kappa"]
			& A \arrow[r,shift left,"\alpha"]
			& B \arrow[l,shift left,"\beta"] \\
			X_2 \arrow[r,"\kappa_2"'] \arrow[u,"u_2",tail]
			& A_2 \arrow[r,shift left,"\alpha_2"] \arrow[u,"v_2",tail]
			& B \arrow[l,shift left,"\beta_2"] \arrow[u,equal]
		\end{tikzcd}
	\end{center}
	with \(A=A_1\ojoin A_2\) and \(B=\{ b,1\}\), we have that \((X\omeet A_1)\ojoin (X\omeet A_2) =X\).
\end{proposition}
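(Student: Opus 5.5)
The plan is to show that \(\tilde A = A\), and then deduce the statement from it. By Lemmas~\ref{lemma: meet} and~\ref{lemma: implication}, \(\tilde A\) is closed under \(\wedge\) and \(\iimplies\). It contains \(1\), since \(1 \in X_1 \ojoin X_2\). So \(\tilde A\) is a subalgebra of \(A\).

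Next, I check that \(\tilde A\) contains both \(A_1\) and \(A_2\). By Remark~\ref{Remark Split Extension} applied to the split extension \((X_i, A_i, B)\), every \(a \in A_i\) can be written as \(a = (x_1 \iimplies \beta\alpha(a)) \wedge x_2\) with \(x_1\), \(x_2 \in X_i\). Since \(B = \{b,1\}\), we have \(\beta\alpha(a) \in \{b, 1\}\); here \(b\) is identified with \(\beta(b)\), which is the same element of \(A\) for both \(i\) because \(\beta_i\) and \(\beta\) are compatible with the inclusions \(v_i\).
\begin{itemize}
\item If \(\beta\alpha(a) = 1\), then \(a = x_2 \in X_i \oleq X_1 \ojoin X_2\).
\item If \(\beta\alpha(a) = b\), then \(a = x_2 \wedge (x_1 \iimplies b)\), which is exactly of the form in the definition of \(\tilde A\).
\end{itemize}
Hence \(A_1\), \(A_2 \oleq \tilde A\). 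Because \(\tilde A\) is a subalgebra and \(A = A_1 \ojoin A_2\) is the subalgebra generated by \(A_1\) and \(A_2\), we conclude \(\tilde A = A\).

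Now I deduce the statement. The inclusion \((X \omeet A_1) \ojoin (X \omeet A_2) \oleq X\) is automatic, so only the reverse inclusion needs proof. Take \(a \in X\); then \(a \in \tilde A\). If \(a \in X_1 \ojoin X_2\) we are done. Otherwise \(a = x \wedge (y \iimplies b)\) with \(x\), \(y \in X_1 \ojoin X_2 \oleq X\). Apply \(\alpha\), using that \(\alpha\) kills \(X\) (sends it to \(1\)) and that \(\alpha(b) = b\):
\[
1 = \alpha(a) = 1 \wedge (1 \iimplies b) = b.
\]
This is impossible, since \(b \neq 1\) in the two-element algebra. (If \(b = 1\), the case is trivial anyway, because then \(y \iimplies b = 1\) and \(a = x\).) Hence \(a \in X_1 \ojoin X_2\), which proves \(X = (X \omeet A_1) \ojoin (X \omeet A_2)\).

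I expect no real obstacle, since the hard closure computations are already done in the preceding lemmas. The only points needing care are two identifications: \(X_1 \ojoin X_2\) must be read as the subalgebra join inside \(A\), and the element \(b\) must be the common image \(\beta(b)\) in \(A\).
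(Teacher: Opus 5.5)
Your proposal is correct and follows the paper's proof essentially step by step. You show that \(\tilde A\) contains \(1\), is closed under \(\wedge\) and \(\iimplies\) by the preceding lemmas, and contains \(A_1\) and \(A_2\) via the decomposition \(a = (x_1 \iimplies \beta\alpha(a)) \wedge x_2\); hence \(\tilde A = A\). You then exclude elements of the form \(x \wedge (y \iimplies b)\) from \(X\) by applying \(\alpha\). Your explicit case split \(\beta\alpha(a) \in \{b,1\}\) just spells out what the paper leaves implicit.
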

\begin{proof}
	It remains to prove that \(\tilde A = A\). Indeed, since \(X\) is contained in \(A\), this would imply that for each \(x \in X\) we have either \(x \in X_1 \ojoin X_2\) or
	\[
		x = x' \wedge (y' \iimplies b)
	\]
	for some \(x',y' \in X_1 \ojoin X_2\). However, the latter is impossible, since
	\[
		\alpha\bigl(x' \wedge (y' \iimplies b)\bigr) = b,
	\]
	and thus \(x' \wedge (y' \iimplies b)\) cannot lie in the kernel of \(\alpha\). Therefore, we conclude that \(X_1 \ojoin X_2 = X\).

	Now, applying Remark~\ref{Remark Split Extension}, we know that every element \(a \in A_i\) can be written as
	\[
		a = x \wedge (y \iimplies b')
	\]
	for some \(x\), \(y \in X_i\) and \(b' \in B\). This shows that both \(A_1\) and \(A_2\) are contained in \(\tilde A\). Moreover, \(\tilde A\) trivially contains \(1\), and it follows from Lemma~\ref{lemma: meet} and Lemma~\ref{lemma: implication} that \(\tilde A\) is a subalgebra of \(A\). Hence \(\tilde A = A\), which concludes the proof.
\end{proof}

\subsection*{\texorpdfstring{\(\ELHSLat\) is algebraically coherent}{ELHSLat is algebraically coherent}}

We now turn to the proof of algebraic coherence for equationally linear Heyting semilattices.
We begin by establishing the coherence of the kernel functor \(\Ker_B\) in the case where \(B\) is finite, and we then extend the result to the general case.
Before doing so, we start with the following lemma.

\begin{lemma}\label{lemma on coherence}
	In a pointed category \(\C\) with finite limits, for any split epimorphism
	\(p\colon E \to B\) with kernel \(k\colon N \to E\), if the kernel functor
	\[
		\Ker_B \colon \Pt_B(\C) \to \C
	\]
	preserves binary joins, then the change-of-base functor
	\[
		k^* \colon \Pt_E(\C) \to \Pt_N(\C)
	\]
	preserves binary joins as well.
\end{lemma}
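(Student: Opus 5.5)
The plan is to factor the kernel functor \(\Ker_N\) as \(\Ker_E\) followed by the change-of-base \(k^*\), and to realise \(k^*\) through \(\Ker_B\) via a composite with the forgetful functor along the section. Write \(s\colon B\to E\) for a section of \(p\). A point \((A,\alpha,\beta)\) over \(E\) gives a point over \(B\) by composing with the split epimorphism:
\[
	(A,\; p\alpha,\; \beta s) \in \Pt_B(\C).
\]
Call this functor \(\Sigma_p\colon \Pt_E(\C)\to \Pt_B(\C)\). Since pullback of \(\alpha\) along \(k = \ker p\) is the same as the kernel of \(p\alpha\), the object part of \(k^*(A,\alpha,\beta)\) is \(\Ker_B(\Sigma_p(A,\alpha,\beta))\). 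The restricted projection is \(\alpha\) itself, and the section is the map \(N\to\Ker(p\alpha)\) induced by \(\beta k\). In other words, \(U_N\circ k^* = \Ker_B\circ \Sigma_p\), where \(U_N\colon\Pt_N(\C)\to\C\) forgets the point structure.

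The first key step is to check that \(\Sigma_p\) preserves binary joins of subobjects. A subobject of \((A,\alpha,\beta)\) in \(\Pt_E(\C)\) is a subobject \(A'\oleq A\) containing the image of \(\beta\), with the restricted structure. Subobjects of \(\Sigma_p(A,\alpha,\beta)\) in \(\Pt_B(\C)\) are subobjects of \(A\) containing \(\beta s(B)\). Joins in both posets should be computed as joins in \(\Sub(A)\), because the join of two subobjects containing a fixed subobject still contains it. Hence \(\Sigma_p\) is a join-preserving inclusion of subobject lattices. I would state this for a general pointed category with finite limits, taking "binary joins" to mean joins in subobject lattices whenever they exist, and arguing that the join in \(\Pt_E(\C)\) is the join in \(\C\).

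The second step is to show that \(k^*\) preserves joins once we know \(U_N\circ k^*\) does. Subobjects in \(\Pt_N(\C)\) are likewise subobjects of the underlying object containing the section. So the join of two sub-points is their join in \(\C\), and a map of subobject lattices over \(\Pt_N(\C)\) preserves joins exactly when it does so after applying \(U_N\). Therefore
\[
	k^*(A_1\ojoin A_2)=\Ker_B\bigl(\Sigma_p(A_1)\ojoin \Sigma_p(A_2)\bigr)=\Ker_B\Sigma_p(A_1)\ojoin\Ker_B\Sigma_p(A_2)=k^*A_1\ojoin k^*A_2,
\]
where the middle equality uses the hypothesis on \(\Ker_B\).

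I expect the main obstacle to be the second step. One must verify carefully that joins in a fibre \(\Pt_X(\C)\) are created by the forgetful functor, so that a join computed in \(\C\) is indeed a join in the category of points. In a merely pointed finitely complete category, joins may have to be understood as images of coproducts, or simply as least upper bounds in \(\Sub\) when they exist. My plan is to argue that any upper bound in \(\Sub(A)\) of two sub-points automatically contains the section, so the two posets of upper bounds coincide.
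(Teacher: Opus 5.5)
Your proof is correct and is the paper's argument: your $\Sigma_p$ and $U_N$ are exactly the functors $Q$ and $P$ of the paper's square $P k^* \cong \Ker_B Q$, your chain of equalities is the same, and your up-closedness observation (sub-points are the subobjects through which the section factors, so upper bounds in $\Sub(A)$ of two sub-points are themselves sub-points) spells out the paper's remark that joins in the fibres $\Pt_X(\C)$ are computed as in $\C$. The only slip is your opening sentence, which swaps the roles in the factorisation (the correct identity is $\Ker_E \cong \Ker_N k^*$), but it plays no role in the argument.
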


\begin{proof}
	Let \(s\colon B \to E\) be a splitting of \(p\).
	Since pullbacks preserve kernels in pointed categories with finite limits, the diagram
	\[
		\xymatrix{
			\Pt_E(\C) \ar[r]^{k^*} \ar[d]_{Q} &
			\Pt_N(\C) \ar[d]^{P} \\
			\Pt_B(\C) \ar[r]_{\Ker_B} &
			\C
		}
	\]
	commutes up to isomorphism. Here, \(P\) is the functor sending a point over \(N\) to the domain of its split epimorphism, and \(Q\) sends a point \((C,\gamma,\delta)\) to \((C,p\gamma,\delta s)\).

	Now suppose that \(\Ker_B\) preserves finite joins. Since finite joins in the categories of points are computed as in \(\C\), we obtain
	\begin{align*}
		P k^*(C_1 \ojoin C_2)
		 & \cong \Ker_B Q(C_1 \ojoin C_2)               \\
		 & \cong \Ker_B Q(C_1) \ojoin \Ker_B Q(C_2)     \\
		 & \cong P k^*(C_1) \ojoin P k^*(C_2)           \\
		 & \cong P\bigl(k^*(C_1) \ojoin k^*(C_2)\bigr).
	\end{align*}
	We conclude that
	\[
		k^*(C_1 \ojoin C_2) \cong k^*(C_1) \ojoin k^*(C_2),
	\]
	as required.
\end{proof}

\begin{proposition}\label{proposition ac for finite}
	For each finite equationally linear Heyting semilattice \(B\), the kernel functor
	\[
		\Ker_B \colon \Pt_B(\ELHSLat) \longrightarrow \ELHSLat
	\]
	preserves binary joins.
\end{proposition}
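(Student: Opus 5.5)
We argue by induction on the cardinality of \(B\), using Proposition~\ref{theorem: algebraic coherence for 2} as the essential base case and Lemma~\ref{lemma on coherence} to move between bases. If \(|B|=1\), then \(\Ker_B\) is (up to isomorphism) the identity functor and there is nothing to prove. If \(|B|=2\), then \(B=\{b,1\}\), and Proposition~\ref{theorem: algebraic coherence for 2} is exactly condition~(3) of Theorem~\ref{theorem equivalences ac} for this \(B\). That condition says precisely that \(\Ker_B\) preserves binary joins.

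For the inductive step, let \(B\) be finite with \(|B|>2\). We first use the linear-like structure of \(B\) to split it. Since \(B\) is finite, it has a least element \(0\). If \(B\) has a coatom \(c\) such that \({\uparrow}c=\{c,1\}\) is a proper filter, we would like a retraction of \(B\) onto a smaller algebra. The plan is to exhibit \(B\) as the middle object of a split extension \(N \to B \rightleftarrows B'\) with \(N\) and \(B'\) both nontrivial, hence both strictly smaller than \(B\). For instance, take \(N\) a proper nontrivial filter of \(B\), and take \(B'\) a subalgebra of \(B\) that is mapped isomorphically onto \(B/N\). In an equationally linear (subdirectly linear) finite algebra, such a splitting should exist. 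A concrete choice is \(N={\uparrow}c\) for a coatom \(c\), with the section given by a complementary subalgebra; if that fails, use the filter generated by an element not above anything else and adjust. This existence of a nontrivial split decomposition of a finite \(B\) with \(|B|>2\) is where I expect the main obstacle to lie. If no such splitting exists in general, I would instead reduce via the characterisation of Theorem~\ref{theorem equivalences ac}(3) by handling the generators of \(B\) one at a time, adapting the argument of the two-element case.

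Given such a split extension \(k\colon N\to B\), \(p\colon B\to B'\), \(s\colon B' \to B\), we compute \(\Ker_B\) as a composite. For a point \((A,\alpha,\beta)\) over \(B\), the kernel of \(\alpha\) equals the kernel of the pulled-back point \(k^*(A)\to N\), viewed as a point over \(N\). Thus \(\Ker_B \cong \Ker_N\circ k^*\). By the induction hypothesis, \(\Ker_{B'}\) preserves binary joins, so Lemma~\ref{lemma on coherence} shows that \(k^*\) preserves binary joins. Again by the induction hypothesis, \(\Ker_N\) preserves binary joins. Hence the composite \(\Ker_B\) preserves binary joins, which completes the induction.
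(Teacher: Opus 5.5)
There is a genuine gap. Your assembly of the inductive step, via $\Ker_B\cong\Ker_N\circ k^*$ together with Lemma~\ref{lemma on coherence}, is exactly the paper's mechanism. But the step it depends on is missing: you never show that a finite $B$ with $|B|>2$ sits in a split extension $N\rightarrowtail B\rightleftarrows B'$ with $\Ker_N$ and $\Ker_{B'}$ both already covered by the induction.

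You only assert that such a splitting ``should exist''. You propose $N={\uparrow}c$ for a coatom $c$, with an unspecified ``complementary subalgebra'' as section, but nothing you write shows that $B\to B/{\uparrow}c$ admits a section. Otherwise you offer a fallback that is never carried out. Since this is the only non-formal ingredient of the induction, the proof is incomplete as it stands.

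The missing idea is to split off the two-element algebra using the bottom element; this needs no equational linearity at all.

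\begin{itemize}
\item As $B$ is finite it has a least element $0$, and $\{0,1\}$ is a subalgebra, giving $s\colon 2\to B$.
\item Take an atom $x$ of $B$; this is presumably what your ``element not above anything else'' should be.
\item For $a\notin{\uparrow}x$ we have $a\wedge x\in\{0,x\}$ and $a\wedge x\neq x$, so $a\wedge x=0$. Hence $x\leq a\iimplies 0=(a\iimplies 0)\wedge(0\iimplies a)$, so $a$ is identified with $0$ in $B/{\uparrow}x$.
\item Since $0\notin{\uparrow}x$, the quotient $p\colon B\to B/{\uparrow}x$ is onto the two-element algebra and $ps=1_2$.
\item Its kernel $N={\uparrow}x$ omits $0$, so $|N|<|B|$ and the induction hypothesis applies to $\Ker_N$. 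The case $\Ker_2$ is Proposition~\ref{theorem: algebraic coherence for 2}.
\end{itemize}

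With this splitting, your final paragraph goes through verbatim and coincides with the paper's proof.
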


\begin{proof}
	Let \(B\) be a finite equationally linear Heyting semilattice with \(n\) elements.
	The statement is clear for \(n=1\), and the case \(n=2\) was established in
	Proposition~\ref{theorem: algebraic coherence for 2}.
	Assume that the claim holds for any finite equationally linear Heyting semilattice
	with fewer than \(n\) elements.

	Since \(B\) is finite, it is a Heyting algebra, and there exists a unique Heyting
	algebra morphism \(s\colon 2 \to B\), where \(2 \coloneq \{0,1\}\) is the initial object
	in the category of Heyting algebras.
	Viewed as a morphism of Heyting semilattices, \(s\) is a split monomorphism, with
	splitting \(p\colon B \to 2\) given by quotienting by the up-set
	\[
		\ua{x} \coloneq \{ y \in B \mid y \ge x \}
	\]
	of any minimal element \(x \in B\) amongst elements above the bottom.
	We denote by \(k\colon N \to B\) the kernel of \(p\).

	Since \(N\) has strictly fewer elements than \(B\), the functor \(\Ker_N\) preserves
	binary joins by the induction hypothesis.
	Applying Lemma~\ref{lemma on coherence}, together with the fact that \(\Ker_2\)
	preserves binary joins by Proposition~\ref{theorem: algebraic coherence for 2}, we
	conclude that the change-of-base functor \(k^*\) preserves binary joins.
	Finally, we observe that
	\[
		\Ker_B \cong \Ker_N  k^*,
	\]
	which holds in any pointed category with finite limits. This completes the proof.
\end{proof}

\begin{theorem}\label{theorem AC for elhslat}
	The category of equationally linear Heyting semilattices is algebraically coherent.
\end{theorem}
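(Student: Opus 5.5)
By Theorem~\ref{theorem equivalences ac}, it suffices to show that for every equationally linear Heyting semilattice \(B\) the kernel functor \(\Ker_B\) preserves binary joins. Equivalently, it suffices to verify item~(3) of that theorem for an arbitrary \(B\). The finite case is Proposition~\ref{proposition ac for finite}, so the plan is to reduce the general case to the finite one by a local finiteness argument.

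Fix a cospan of sub-split-extensions as in Theorem~\ref{theorem equivalences ac}, with \(A = A_1 \ojoin A_2\), and let \(x \in X\). We must show that \(x \in (X\omeet A_1)\ojoin (X\omeet A_2)\). Since \(A\) is generated by \(A_1\) and \(A_2\), the element \(x\) is given by a term \(t(a_1,\dots,a_m,a'_1,\dots,a'_k)\) with \(a_i \in A_1\) and \(a'_j \in A_2\). By Remark~\ref{Remark Split Extension}, we may write each \(a_i\) as \((x_{i}\iimplies \beta(b_i))\wedge y_i\), with \(x_i, y_i \in X\omeet A_1\) and \(b_i\in B\), and similarly for the \(a'_j\). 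Only finitely many elements \(b_1,\dots,b_r \in B\) are involved.

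Let \(B_0\) be the subalgebra of \(B\) generated by \(b_1,\dots,b_r\). The key input is that \(\ELHSLat\) is locally finite. This holds because the variety is generated by linearly ordered Heyting semilattices, by the Nemitz--Whaley result quoted in the preliminaries. The subalgebra of a chain generated by \(r\) elements is a subset of those elements together with \(1\), so it has at most \(r+1\) elements; hence finitely generated free algebras are finite, being subalgebras of finite products of such chains. So \(B_0\) is finite, with inclusion \(j\colon B_0 \rightarrowtail B\). Pulling the whole cospan back along \(j\) gives a cospan of sub-split-extensions over \(B_0\) with the same kernels \(X\), \(X\omeet A_1\) and \(X\omeet A_2\), namely \(A' = \alpha^{-1}(B_0)\) and \(A'_i = \alpha_i^{-1}(B_0) = A_i \omeet A'\).

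One subtlety is that \(A'\) need not equal \(A'_1 \ojoin A'_2\). To handle this, apply Proposition~\ref{proposition ac for finite} to the sub-split-extension \(A'_1 \ojoin A'_2 \oleq A'\), whose kernel \(X''\) sits over \(B_0\) because it contains \(\beta(B_0)\). This gives \(X'' = (X\omeet A'_1)\ojoin (X\omeet A'_2)\). Our element \(x\) lies in \(A'_1\ojoin A'_2\), since all the \(a_i\) and \(a'_j\) do (they are built from elements of \(X\omeet A_i\) and of \(\beta(B_0)\)), and \(\alpha(x)=1\). Hence \(x\in X''\), and therefore \(x\in (X\omeet A_1)\ojoin (X\omeet A_2)\). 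This establishes the reverse inclusion, and the other inclusion is trivial.

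The main obstacle is justifying local finiteness cleanly. If citing the chain-generation result feels insufficient, an alternative is to argue via the \(\vee\)-normal forms of Lemma~\ref{Equations}.
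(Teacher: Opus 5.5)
Your proposal is correct and is essentially the paper's proof. The paper also uses local finiteness: it writes \(A\) as the directed union of the \(\alpha^{-1}(S)\) over finite subalgebras \(S\oleq B\), then applies Proposition~\ref{proposition ac for finite} to the sub-split-extension \((A_1\omeet\alpha^{-1}(S))\ojoin(A_2\omeet\alpha^{-1}(S))\) over \(S\), exactly as you do with \(B_0\). The only differences are that the paper argues globally rather than element by element, and that your detour through Remark~\ref{Remark Split Extension} is unnecessary, since you can simply take \(B_0\) to be generated by the finitely many \(\alpha(a_i)\) and \(\alpha(a'_j)\).
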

\begin{proof}
	Consider a cospan of sub-split extensions of the form
	\begin{center}
		\begin{tikzcd}[column sep=large,row sep=large]
			X_1 \arrow[r,"\kappa_1"] \arrow[d,"u_1",',tail]
			& A_1 \arrow[r,shift left,"\alpha_1"] \arrow[d,"v_1",',tail]
			& B \arrow[l,shift left,"\beta_1"] \arrow[d,equal] \\
			X \arrow[r,"\kappa"]
			& A \arrow[r,shift left,"\alpha"]
			& B \arrow[l,shift left,"\beta"] \\
			X_2 \arrow[r,"\kappa_2"'] \arrow[u,"u_2",tail]
			& A_2 \arrow[r,shift left,"\alpha_2"] \arrow[u,"v_2",tail]
			& B \arrow[l,shift left,"\beta_2"] \arrow[u,equal]
		\end{tikzcd}
	\end{center}
	with \(A = A_1 \ojoin A_2\).
	In order to prove algebraic coherence, by Theorem~\ref{theorem equivalences ac}, it suffices to show that
	\[
		X = X_1 \ojoin X_2,
	\]
	where \(X_1 = X \omeet A_1\) and \(X_2 = X \omeet A_2\).

	Let \(T\) be the set of all finite subalgebras of \(B\). Clearly,
	\[
		B = \bigcup_{S \in T} S.
	\]
	Recall that in Heyting semilattices every finitely generated algebra is finite (see~\cite{NeWh} or Appendix~\ref{Appendix}).

	By algebraic coherence over finite objects (Proposition~\ref{proposition ac for finite}), for any \(S \in T\) we have
	\begin{align*}
		 & X \omeet \bigl((A_1 \omeet \alpha^{-1}(S)) \ojoin (A_2 \omeet \alpha^{-1}(S))\bigr)    \\
		 & = (X \omeet (A_1 \omeet \alpha^{-1}(S))) \ojoin (X \omeet (A_2 \omeet \alpha^{-1}(S))) \\
		 & = X_1 \ojoin X_2 .
	\end{align*}
	Here we used the fact that \(\alpha^{-1}(S)\) contains the kernel of \(\alpha\).
	Since \(\alpha\) is a split epimorphism, it follows that
	\[
		A = \bigcup \{ \alpha^{-1}(S) \mid S \in T \}.
	\]
	Therefore,
	\begin{align*}
		A
		 & = (A_1 \omeet A) \ojoin (A_2 \omeet A)                                                                   \\
		 & = \bigojoin_{S \in T} (A_1 \omeet \alpha^{-1}(S)) \ojoin \bigojoin_{S \in T} (A_2 \omeet \alpha^{-1}(S)) \\
		 & = \bigojoin_{S \in T} \bigl((A_1 \omeet \alpha^{-1}(S)) \ojoin (A_2 \omeet \alpha^{-1}(S))\bigr).
	\end{align*}
	Intersecting with \(X\), we obtain
	\begin{align*}
		X
		 & = \bigcup \{ X \omeet ((A_1 \omeet \alpha^{-1}(S)) \ojoin (A_2 \omeet \alpha^{-1}(S))) \mid S \in T \} \\
		 & = \bigcup \{ X_1 \ojoin X_2 \mid S \in T \}                                                            \\
		 & = X_1 \ojoin X_2,
	\end{align*}
	as required.
\end{proof}

In~\cite{acc}, Section~8, the authors provide a summary of conditions that follow from algebraic coherence.
Applying this list in our setting, we obtain the following.

\begin{corollary}
	The category \(\ELHSLat\) satisfies the following properties:
	\begin{enumerate}
		\item change-of-base functors of the fibration of points preserve Higgins and Huq commutators, normal closures, and cokernels;
		\item it satisfies the conditions \SH{} and \NH{};
		\item it is peri-abelian and therefore satisfies the universal central extension condition;
		\item it satisfies \SSH{};
		\item it is strongly protomodular;
		\item it is fibrewise algebraically cartesian closed \FWACC{};
		\item for any normal subobjects \(K,L,M\) of an object \(X\), we have
		      \[
			      [K,L,M] = [[K,L],M] \ojoin [L,[K,M]]
		      \]
		      as subobjects of \(X\).\noproof
	\end{enumerate}
\end{corollary}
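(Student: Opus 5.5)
The statement is a list of consequences of algebraic coherence, so the plan is to derive each item from Theorem~\ref{theorem AC for elhslat} by invoking the general results collected in Section~8 of~\cite{acc}. Those results are stated for algebraically coherent semi-abelian categories, and \(\ELHSLat\) is a semi-abelian variety, so no further hypotheses need checking. Concretely, I would argue as follows.

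Item~(1) is the basic preservation theorem of~\cite{acc}. Coherent change-of-base functors preserve joins and kernels. Higgins commutators, Huq commutators, normal closures and cokernels are all built from joins, images and kernels of points. Hence they are preserved by each \(p^*\).

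Items~(2), (4) and~(6) follow from the implications listed in~\cite{acc}:
\begin{itemize}
	\item algebraic coherence implies \SH{} and \NH{};
	\item it implies \FWACC{};
	\item combined with \SH{}, it yields the strong \SSH{} condition.
\end{itemize}
For \SSH{} we can also point to the earlier Theorem, which already records it for \(\ELHSLat\).

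Item~(5), strong protomodularity, is again a listed consequence of algebraic coherence in the semi-abelian setting. Item~(3) uses the fact from~\cite{acc} that algebraically coherent semi-abelian categories are peri-abelian. Peri-abelian categories satisfy the universal central extension condition by the result cited there.

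Item~(7), the decomposition \([K,L,M] = [[K,L],M]\ojoin[L,[K,M]]\) of ternary commutators of normal subobjects, is likewise proved in~\cite{acc} for algebraically coherent semi-abelian categories, so it applies verbatim.

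The only real work is bookkeeping: making sure each cited implication needs no extra assumption beyond semi-abelian plus \AC{}. For instance, some statements in~\cite{acc} concerning the universal central extension condition pass through peri-abelianness rather than following from \AC{} directly. I expect that to be the one point needing a careful check of the reference. Otherwise the corollary is immediate from Theorem~\ref{theorem AC for elhslat}.
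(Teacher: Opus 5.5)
Your proposal follows the paper's route exactly. The paper states the corollary without proof, as the specialisation to \(\ELHSLat\) (via Theorem~\ref{theorem AC for elhslat} and semi-abelianness) of the summary of consequences of algebraic coherence in Section~8 of~\cite{acc}; your item-by-item glosses, such as item~(1) via joins, images and kernels, are only heuristic, but since everything rests on that citation nothing more is needed.
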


Combining this with Proposition~\ref{proposition elhslat huq} and Theorem~\ref{theorem w}, we obtain the following.

\begin{corollary}
	Every fibre of the fibration of points in \(\ELHSLat\) satisfies the condition \W{}.
\end{corollary}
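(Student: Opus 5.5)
The plan is to work fibrewise. Fix an object \(B\) of \(\ELHSLat\) and consider the fibre \(\Pt_B(\ELHSLat)\). By Theorem~\ref{theorem w}, it suffices to check two things in this fibre. First, that it satisfies \SH{}. Second, that for any subobjects \(X\), \(Y\) of a point \(A\) over \(B\), the commutator \([X,Y]\) vanishes if and only if \([\bar X,\bar Y]\) does. The fibre is again semi-abelian, since fibres of the fibration of points of a semi-abelian category are semi-abelian. So Theorem~\ref{theorem w} applies there.

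The condition \SH{} for fibres follows from algebraic coherence. Indeed, \cite{acc} shows that the fibres of an algebraically coherent category are again algebraically coherent, and algebraic coherence implies \SH{}. Combining this with Theorem~\ref{theorem AC for elhslat} and the preceding corollary gives \SH{} in every fibre.

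For the second condition, I will exploit the fact that the kernel functor \(\Ker_B\) reflects and preserves the relevant structure. The plan is as follows.

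\begin{enumerate}
\item Recall that in a protomodular category, subobjects of a point \((A,\alpha,\beta)\) in the fibre are the sub-points containing \(\beta(B)\). Such a sub-point is determined by its kernel part, a subobject \(X\cap \Ker\alpha\) of the kernel \(K\).
\item Use the corollary above: change-of-base functors, in particular \(\Ker_B\), preserve Higgins and Huq commutators and normal closures.
\item Use that the kernel functor of a protomodular category reflects isomorphisms. Hence a sub-point is trivial, equal to \(B\), exactly when its kernel is \(0\).
\end{enumerate}

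Together these give the following equivalences for sub-points \(X\), \(Y\):
\begin{align*}
[X,Y]_B = 0 &\iff [\Ker X,\Ker Y] = 0, \\
[\bar X,\bar Y]_B = 0 &\iff [\overline{\Ker X},\overline{\Ker Y}] = 0,
\end{align*}
where \(\overline{\Ker X}\) is taken in \(K\) or in \(A\) as appropriate.

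I then need to compare the two right-hand conditions inside \(\ELHSLat\). Proposition~\ref{proposition elhslat huq} and Theorem~\ref{theorem arithmetical and w} show that \([X',Y']=0\) if and only if \(\bar X'\omeet \bar Y'=0\) for subobjects \(X'\), \(Y'\) of \(K\). Since normal closure is idempotent, the same criterion shows that \([\bar X',\bar Y']=0\) if and only if \(\bar X'\omeet\bar Y'=0\). So the two vanishing conditions agree, and Theorem~\ref{theorem w} yields \W{} in the fibre.

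The main obstacle is step (2). I must make sure that normal closure of a sub-point in the fibre, taken relative to \(A\), corresponds under \(\Ker_B\) to the normal closure computed in \(\ELHSLat\). The issue is whether this closure is taken in \(K\) or in \(A\). I expect it to be taken in \(A\), intersected with \(K\).

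To handle this, I would argue that \([\,\cdot\,,\cdot\,]=0\) is insensitive to the ambient object, since Huq-commutation is witnessed by the element condition of Proposition~\ref{proposition commute in hslat}. I would then apply Proposition~\ref{proposition centraliser} inside \(A\): centralisers there are filters. Hence if \(\Ker X\) and \(\Ker Y\) commute, so do their normal closures in \(A\). This directly gives the needed implication, and the converse is trivial.
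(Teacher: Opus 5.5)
Your proposal is correct and follows essentially the paper's route. You transfer commutation along \(\Ker_B\) to \(\ELHSLat\), use Proposition~\ref{proposition elhslat huq} (normality of centralisers) there, and transfer back. The differences are that you justify the transfer by preservation of Higgins commutators and normal closures under change of base rather than by \SSH{}, and that you check \SH{} in the fibres explicitly, which the paper leaves implicit. Your worry about closures in \(K\) versus \(A\) is harmless: in \(\ELHSLat\) a filter of the filter \(K\) is already a filter of \(A\), so the two closures coincide.
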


\begin{proof}
	Let \(S\) and \(T\) be subobjects of an object \(A\) in \(\Pt_B(\ELHSLat)\). As explained in~\cite{MFVdL3}, since \SSH{} holds, \(S\) and \(T\) Huq-commute in \(\Pt_B(\ELHSLat)\) if and only if \(\Ker_B(S)\) and \(\Ker_B(T)\) Huq-commute in \(\ELHSLat\). By Proposition~\ref{proposition elhslat huq}, this is equivalent to \(\Ker_B(\bar S)\) and \(\Ker_B(\bar T)\) Huq-commuting.	Finally, this holds if and only if \(\bar S\) and \(\bar T\) Huq-commute in \(\Pt_B(\ELHSLat)\).
\end{proof}

In~\cite{AlgebraicLogoi}, the authors introduced the notion of \emph{algebraic logoi}. A category with finite limits is called an \defn{algebraic logos} if and only if every change-of-base functor preserves jointly strongly epimorphic (small) families of arrows. In particular, any algebraic logos is an algebraically coherent category. Thanks to Corollary~4.4 of~\cite{AlgebraicLogoi}, the converse holds for homological quasi-varieties of algebras, and therefore:

\begin{corollary}
	The category \(\ELHSLat\) is an algebraic logos.\noproof
\end{corollary}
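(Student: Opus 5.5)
The statement is immediate from two facts already in hand. The first is Theorem~\ref{theorem AC for elhslat}: \(\ELHSLat\) is algebraically coherent. The second is Corollary~4.4 of~\cite{AlgebraicLogoi}, which says that for homological quasi-varieties of algebras, algebraic coherence is equivalent to being an algebraic logos. So the plan is simply to check that \(\ELHSLat\) satisfies the hypotheses of that corollary, and then combine the two.

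First, I will check that \(\ELHSLat\) is a homological quasi-variety. It is a variety of universal algebras, cut out inside \(\HSLat\) by Freyd's identity, and every variety is in particular a quasi-variety. It is semi-abelian because Theorem~\ref{BouJa Charact} applies with the same terms~\eqref{protomodular terms for HSL} that Johnstone used for \(\HSLat\): these identities continue to hold in any subvariety, and the unique constant is still \(1\). A semi-abelian category is homological, so \(\ELHSLat\) is a homological (indeed semi-abelian) variety.

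Second, I will invoke Theorem~\ref{theorem AC for elhslat}. It gives that every change-of-base functor of the fibration of points preserves jointly strongly epimorphic pairs, as well as finite limits.

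Finally, Corollary~4.4 of~\cite{AlgebraicLogoi} upgrades preservation of jointly strongly epimorphic pairs to preservation of arbitrary small jointly strongly epimorphic families, in the homological quasi-variety setting. The idea behind that upgrade is that in a variety a family is jointly strongly epimorphic exactly when the union of the images generates the codomain. Since terms are finitary, every element lies in the join of finitely many images, and the union over the finite subfamilies is directed. Pullback functors in a variety preserve directed unions of subobjects, because these are computed as set-theoretic unions. Together with preservation of finite joins, this yields preservation of arbitrary joins. I do not expect any real obstacle here: the only thing to make sure of is that the hypotheses of the cited corollary are met, and this is the semi-abelianness verification above.
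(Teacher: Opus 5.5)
Your proposal is correct and is exactly the paper's argument: \(\ELHSLat\) is a semi-abelian, hence homological, variety, so Corollary~4.4 of~\cite{AlgebraicLogoi} applies and turns the algebraic coherence of Theorem~\ref{theorem AC for elhslat} into the algebraic logos property. Your additional sketch of why coherence upgrades to small jointly strongly epimorphic families (finitary generation, plus pullback preserving directed unions) is sound in the variety setting, but it is not needed once the corollary is cited.
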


\section{Failure of local algebraic cartesian closedness}\label{section lacc}

Having shown that \(\ELHSLat\) is algebraically coherent, it is natural to ask whether it satisfies the stronger condition of \emph{local algebraic cartesian closedness}, since (essentially by definition of \AC{}) the condition \LACC{} implies \AC{}~\cite{Bourn-Gray,acc}. We prove that this is not the case: \(\ELHSLat\) is algebraically coherent but not locally algebraically cartesian closed. This is the exact algebraic analogue of the situation for Boolean rings~\cite{Gray2012}, the only other semi-abelian category we know to satisfy the equivalence \([X,Y]=0 \Leftrightarrow \bar X \omeet \bar Y = 0\) of Proposition~\ref{proposition elhslat huq}.

Recall from~\cite{Bourn-Gray} that a finitely complete category \(\C\) is \defn{locally algebraically cartesian closed} (it satisfies condition \LACC{}) when, for every \({p\colon E \to B}\) in \(\C\), the change-of-base functor
\[
	p^* \colon \Pt_B(\C) \longrightarrow \Pt_E(\C)
\]
is a left adjoint, that is, admits a right adjoint.

The kernel functor is an instance of such a change-of-base functor. Indeed, for the unique morphism \(\gnab_B \colon 0 \to B\) we have \(\Pt_0(\C) \simeq \C\), and the pullback of a point \((A,\alpha,\beta)\) along \(\gnab_B\) is precisely the kernel of \(\alpha\); thus
\[
	\Ker_B \cong (\gnab_B)^* \colon \Pt_B(\C) \longrightarrow \C.
\]
This gives the following well-known elementary necessary condition, which is all we shall need.

\begin{proposition}\label{proposition lacc preserves coproducts}
	If a pointed category \(\C\) with finite limits is locally algebraically cartesian closed, then for every object \(B\) the kernel functor \(\Ker_B \colon \Pt_B(\C) \to \C\) preserves binary coproducts.
\end{proposition}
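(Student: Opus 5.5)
The plan is to combine two facts already set out above. First, \(\Ker_B \cong (\gnab_B)^*\) is a change-of-base functor. Second, left adjoints preserve all colimits that exist, and in particular binary coproducts. If \(\C\) satisfies \LACC{}, then applying the definition to \(p = \gnab_B\colon 0\to B\) shows that \((\gnab_B)^*\colon \Pt_B(\C)\to\Pt_0(\C)\) has a right adjoint, so it preserves every colimit that exists in \(\Pt_B(\C)\). The proof then reduces to two routine identifications.

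\emph{Step 1: identifying \(\Pt_0(\C)\) with \(\C\).} Since \(0\) is a zero object, a point over \(0\) is a triple \((A,!_A,\gnab_A)\). The morphism \(A\to 0\) and its section \(0\to A\) are uniquely determined, so the forgetful functor \(\Pt_0(\C)\to\C\) is an isomorphism of categories. Under this isomorphism, the pullback of \(\alpha\) along \(\gnab_B\) is the kernel of \(\alpha\), and its splitting is the unique map out of \(0\). This justifies \(\Ker_B\cong U\circ(\gnab_B)^*\), where \(U\) is that isomorphism. Composing a left adjoint with an isomorphism of categories again gives a left adjoint, so \(\Ker_B\) itself has a right adjoint.

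\emph{Step 2: coproducts in \(\Pt_B(\C)\).} The statement only asks that \(\Ker_B\) preserve binary coproducts \emph{when they exist}, so the main task is to state this correctly. Binary coproducts in \(\Pt_B(\C)\) are pushouts over \(B\) in \(\C\): for points \((A_1,\alpha_1,\beta_1)\) and \((A_2,\alpha_2,\beta_2)\), the coproduct is \(A_1+_B A_2\), the pushout of \(\beta_1\) and \(\beta_2\), with the induced retraction and section. In the semi-abelian setting (or any finitely cocomplete \(\C\)) these pushouts exist. A left adjoint preserves any colimit that exists, so \(\Ker_B(A_1+_BA_2)\cong \Ker_B(A_1)+\Ker_B(A_2)\), with the canonical comparison map being the isomorphism.

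\emph{Expected obstacle.} There is no substantive obstacle. The only care needed is the bookkeeping in Step 1: checking that the composite with the isomorphism \(\Pt_0(\C)\cong\C\) is still a left adjoint, so that preservation transfers to \(\Ker_B\). It also helps to note explicitly, as in the remark above, that the pullback along \(\gnab_B\) really is the kernel; this holds in any pointed category with finite limits. For later use in the paper, I would record the contrapositive: to refute \LACC{} for \(\ELHSLat\), it suffices to exhibit \(B\) and two points over \(B\) whose kernel of the pushout \(A_1+_BA_2\) is strictly larger than \(\Ker(\alpha_1)+\Ker(\alpha_2)\).
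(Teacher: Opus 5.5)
Your proof is correct and follows the paper's argument: under \LACC{} the functor \(\Ker_B \cong (\gnab_B)^*\) is a left adjoint, so it preserves binary coproducts. Your identification \(\Pt_0(\C)\cong\C\) and your description of coproducts in \(\Pt_B(\C)\) are bookkeeping the paper leaves implicit. One caution about your closing remark: \(\ELHSLat\) is algebraically coherent, so the comparison \(X_1+X_2\to\Ker_B(A_1+_BA_2)\) is always a regular epimorphism there. A counterexample must therefore make it fail to be a monomorphism, as the paper does, rather than make the kernel ``strictly larger''.
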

\begin{proof}
	Under \LACC{} the functor \(\Ker_B \cong (\gnab_B)^*\) is a left adjoint, hence preserves all colimits, and in particular binary coproducts.
\end{proof}

Concretely, for two points \((C_1,\gamma_1,\delta_1)\) and \((C_2,\gamma_2,\delta_2)\) over \(B\), with kernels \(X_1\) and \(X_2\), the coproduct injections induce a canonical \defn{comparison morphism}
\begin{equation}\label{eq comparison}
	c \colon X_1 + X_2 \longrightarrow \Ker_B(C_1 +_B C_2),
\end{equation}
where \(+\) on the left denotes the coproduct in \(\C\) and \(+_B\) the coproduct in the fibre \(\Pt_B(\C)\). The functor \(\Ker_B\) preserves the coproduct \(C_1 +_B C_2\) if and only if~\eqref{eq comparison} is an isomorphism. By contrast, algebraic coherence is equivalent to the requirement that every such \(c\) be a regular epimorphism; this is the kernel-functor reformulation of \AC{} from~\cite{acc} that underlies Theorem~\ref{theorem equivalences ac}, used above to prove that \(\ELHSLat\) is algebraically coherent. Thus, to separate the two conditions, it suffices to exhibit a single comparison morphism~\eqref{eq comparison} that is a regular epimorphism but not a monomorphism.

We work over the two-element base
\[
	B = \{b < 1\},
\]
and we use two split extensions of \(B\) whose kernels are both isomorphic to the two-element chain \(\{\ast < 1\}\).

\smallskip
\noindent\emph{The first point} is the three-element chain \(A_1 = \{p < x_1 < 1\}\), endowed with
\[
	\alpha_1(p) = b, \quad \alpha_1(x_1) = \alpha_1(1) = 1, \quad \beta_1(b) = p, \quad \beta_1(1) = 1.
\]
Then \(\alpha_1\) is a homomorphism with \(\alpha_1\beta_1 = 1_B\), and its kernel is the two-element chain \(X_1 = \{x_1 < 1\}\); we write \(\kappa_1\colon X_1 \rightarrowtail A_1\) for the inclusion and \(\hat b_1 \coloneq \beta_1(b) = p\). Note the crucial order relation
\begin{equation}\label{eq order in A1}
	\hat b_1 = p \leq x_1 \quad \text{in } A_1.
\end{equation}
This extension is not a product: the ``action'' is nontrivial, as witnessed by
\[
	\hat b_1 \iimplies \kappa_1(x_1) = p \iimplies x_1 = 1 \neq x_1.
\]

\smallskip
\noindent\emph{The second point} is the product (trivial) extension
\[
	A_2 = B \times X_2, \qquad X_2 = \{x_2 < 1\},
\]
with \(\alpha_2 = \mathrm{pr}_B\) the projection, \(\beta_2 = (1_B, 0)\) the canonical inclusion, kernel \(\kappa_2 \colon X_2 \cong \{1\}\times X_2 \rightarrowtail A_2\), and \(\hat b_2 \coloneq \beta_2(b) = (b,1)\). Observe that \(A_2 \cong 2^2\) is the four-element Boolean algebra. The defining feature of a product extension is that its action is trivial:
\begin{equation}\label{eq trivial action}
	\hat b_2 \iimplies \kappa_2(x_2) = (b,1) \iimplies (1,x_2) = (b \iimplies 1, \, 1 \iimplies x_2) = (1, x_2) = \kappa_2(x_2).
\end{equation}

The Hasse diagrams of \(B\), \(A_1\) and \(A_2\) are as follows.
\[
	\vcenter{\xymatrix@!=0ex{
	1 \\ b \ar@{-}[u]
	}}
	\qquad\qquad
	\vcenter{\xymatrix@!=0ex{
	1 \\ x_1 \ar@{-}[u] \\ p \ar@{-}[u]
	}}
	\qquad\qquad
	\vcenter{\xymatrix@!=0ex{
	& (1,1) & \\
	(b,1) \ar@{-}[ur] & & (1,x_2) \ar@{-}[ul] \\
	& (b,x_2) \ar@{-}[ul] \ar@{-}[ur] &
	}}
\]

Form the coproduct \(A_1 +_B A_2\) in the fibre \(\Pt_B(\ELHSLat)\); concretely it is the pushout of \(\beta_1\) and \(\beta_2\) in \(\ELHSLat\), with structural injections \(\iota_1 \colon A_1 \to A_1 +_B A_2\) and \(\iota_2 \colon A_2 \to A_1 +_B A_2\) satisfying \(\iota_1\beta_1 = \iota_2\beta_2\). In particular the two copies of \(b\) are identified:
\[
	\hat b \coloneq \iota_1(\hat b_1) = \iota_2(\hat b_2).
\]
To lighten the notation we write \(x_1\) and \(x_2\) also for their images \(\iota_1\kappa_1(x_1)\) and \(\iota_2\kappa_2(x_2)\) in \(A_1 +_B A_2\); these generate \(\Ker_B(A_1 +_B A_2)\).

\begin{lemma}\label{lemma lacc collapse}
	In \(A_1 +_B A_2\) one has \(x_1 \iimplies x_2 = x_2\).
\end{lemma}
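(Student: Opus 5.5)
The plan is to obtain the identity from the join in the equationally linear pushout \(A_1 +_B A_2\). The idea is that \(x_2\) "cooperates" with \(\hat b\) because the second extension is trivial. Since \(\hat b \leq x_1\) by~\eqref{eq order in A1}, monotonicity of the join in a semi-Boolean Heyting semilattice should force \(x_1\) and \(x_2\) to cooperate as well. That is exactly the statement.

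\emph{Step 1.} Compute \(\hat b_2 \vee \kappa_2(x_2)\) in \(A_2\). By~\eqref{eq trivial action} we have \(\hat b_2 \iimplies \kappa_2(x_2) = \kappa_2(x_2)\). The symmetric computation gives \(\kappa_2(x_2) \iimplies \hat b_2 = (1 \iimplies b,\, x_2 \iimplies 1) = (b,1) = \hat b_2\). Hence both factors of the pseudo-join are \(1\), so \(\hat b_2 \vee \kappa_2(x_2) = 1\) in \(A_2\); equivalently, this follows from Proposition~\ref{proposition commute in hslat}. The join \(\vee\) is a term operation, so \(\iota_2\) preserves it, and therefore \(\hat b \vee x_2 = 1\) in \(A_1 +_B A_2\).

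\emph{Step 2.} Transport the order relation. Since \(\iota_1\) is monotone, \eqref{eq order in A1} gives \(\hat b \leq x_1\) in \(A_1 +_B A_2\). The pushout lies in \(\ELHSLat\), so it is semi-Boolean. Item~(3) of Proposition~\ref{proposition equivalences semi-boolean} then yields
\[
	1 = \hat b \vee x_2 \leq x_1 \vee x_2 ,
\]
and hence \(x_1 \vee x_2 = 1\).

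\emph{Step 3.} Unfold the join. By definition, \(x_1 \vee x_2 = ((x_1 \iimplies x_2) \iimplies x_2) \wedge ((x_2 \iimplies x_1) \iimplies x_1) = 1\), so \((x_1 \iimplies x_2) \iimplies x_2 = 1\), that is, \(x_1 \iimplies x_2 \leq x_2\). The reverse inequality \(x_2 \leq x_1 \iimplies x_2\) holds in any Heyting semilattice, because \(x_2 \wedge x_1 \leq x_2\). Therefore \(x_1 \iimplies x_2 = x_2\). Alternatively, one may apply Proposition~\ref{proposition commute in hslat} to the subobjects \(\{x_1,1\}\) and \(\{x_2,1\}\).

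There is no serious obstacle here. The only point needing care is Step~2: monotonicity of \(\vee\) in each variable is not available in arbitrary Heyting semilattices (compare Example~\ref{example non semi-boolean}), so it is essential that the pushout is computed in \(\ELHSLat\) rather than in \(\HSLat\).
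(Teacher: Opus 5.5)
Your proof is correct, but it takes a detour that the paper avoids. The paper uses only that implication is antitone in its first variable. From \(\hat b \leq x_1\) it gets \(x_1 \iimplies x_2 \leq \hat b \iimplies x_2\). The right-hand side equals \(x_2\) by the trivial action~\eqref{eq trivial action} transported along \(\iota_2\). Together with \(x_2 \leq x_1 \iimplies x_2\), this finishes the proof.

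You instead package the trivial action as the two-sided cooperation \(\hat b \vee x_2 = 1\). You then push it up from \(\hat b\) to \(x_1\) using monotonicity of the join. This is exactly the up-closure argument of Proposition~\ref{proposition centraliser}, applied to the centraliser of \(\{x_2,1\}\).

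Your route gives a stronger conclusion: \(x_1\) and \(x_2\) fully cooperate in \(A_1 +_B A_2\), so that also \(x_2 \iimplies x_1 = x_1\). The price is that you need semi-Booleanness. The paper's route uses only the half of cooperation the lemma asks for, and it is valid in any Heyting semilattice.

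In particular, your closing remark that the pushout must be computed in \(\ELHSLat\) is true of your Step~2, not of the lemma. The identity \(x_1 \iimplies x_2 = x_2\) already holds in the pushout taken in \(\HSLat\).
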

\begin{proof}
	Since \(\iota_1\) is a homomorphism, the order relation~\eqref{eq order in A1} gives \(\hat b \leq x_1\); as the implication is order-reversing in its first variable, this yields
	\[
		x_1 \iimplies x_2 \leq \hat b \iimplies x_2.
	\]
	On the other hand, using \(\iota_1\beta_1 = \iota_2\beta_2\) and the triviality of the action of \(A_2\) from~\eqref{eq trivial action},
	\[
		\hat b \iimplies x_2 = \iota_2(\hat b_2) \iimplies \iota_2\kappa_2(x_2) = \iota_2\bigl(\hat b_2 \iimplies \kappa_2(x_2)\bigr) = \iota_2\kappa_2(x_2) = x_2.
	\]
	Hence \(x_1 \iimplies x_2 \leq x_2\). Since \(y \leq x \iimplies y\) holds in every Heyting semilattice, we also have \(x_2 \leq x_1 \iimplies x_2\), and therefore \(x_1 \iimplies x_2 = x_2\).
\end{proof}

\begin{lemma}\label{lemma lacc free}
	In the coproduct \(X_1 + X_2\) of the kernels, taken in \(\ELHSLat\), one has \(x_1 \iimplies x_2 \neq x_2\).
\end{lemma}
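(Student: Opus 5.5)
To prove that \(x_1 \iimplies x_2 \neq x_2\) in \(X_1 + X_2\), I will use the universal property of the coproduct and separate the two elements by a homomorphism. Since \(X_1\) and \(X_2\) are both two-element chains \(\{x_i < 1\}\), a morphism \(X_1 + X_2 \to C\) into any equationally linear Heyting semilattice \(C\) amounts to choosing two elements \(c_1, c_2 \in C\): the images of \(x_1\) and \(x_2\). Any element \(c\in C\) is a valid image of a generator, because the map \(\{x<1\}\to C\) sending \(x\mapsto c\), \(1\mapsto 1\) preserves \(1\), \(\wedge\) and \(\iimplies\) (we have \(c\iimplies 1=1\), \(1\iimplies c=c\) and \(c\iimplies c=1\)). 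It therefore suffices to find such a \(C\), together with \(c_1, c_2\), satisfying \(c_1 \iimplies c_2 \neq c_2\).

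For \(C\) I take the three-element chain \(\{0 < c < 1\}\), which lies in \(\ELHSLat\) by the example of linearly ordered Heyting semilattices (they are semi-Boolean, hence equationally linear by the Nemitz--Whaley/Köhler characterisation). I then map \(x_1 \mapsto 0\) and \(x_2 \mapsto c\), or alternatively \(x_1 \mapsto c\) and \(x_2 \mapsto c\). The cleanest choice is \(x_1\mapsto 0\), \(x_2\mapsto c\): in the chain, \(0\iimplies c = 1\neq c\). Even more simply, the two-element chain \(\{0<1\}\) with both \(x_1,x_2\mapsto 0\) works, since there \(0\iimplies 0=1\neq 0\).

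Let \(f\colon X_1+X_2\to C\) be the induced morphism. Because \(f\) is a homomorphism, \(f(x_1\iimplies x_2)=f(x_1)\iimplies f(x_2)=1\), whereas \(f(x_2)\neq 1\). Hence \(x_1\iimplies x_2\neq x_2\) in \(X_1+X_2\).

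There is no real obstacle here. The only point to check is that the target lies in \(\ELHSLat\), so that the coproduct's universal property applies in this variety. This holds for chains, since the defining identity reduces, by a case split on whether \(x\le y\) or \(y\le x\), to \(1\iimplies z\wedge(\cdot)=z\). This lemma, combined with Lemma~\ref{lemma lacc collapse}, then shows that the comparison morphism \(c\) is not a monomorphism.
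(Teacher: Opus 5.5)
Your proposal is correct and is essentially the paper's own proof. The paper also separates the two elements with the homomorphism \(X_1+X_2\to B=\{b<1\}\) that sends both generators to \(b\) — your ``even more simply'' choice — so that \(f(x_1\iimplies x_2)=b\iimplies b=1\neq b=f(x_2)\).
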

\begin{proof}
	Let \(f \colon X_1 + X_2 \to B = \{b < 1\}\) be the homomorphism determined by the homomorphisms \(X_1 \to B\colon x_1 \mapsto b\) and \(X_2 \to B\colon x_2 \mapsto b\) (each sends the non-top element to \(b\) and is a homomorphism of two-element chains). Then
	\[
		f(x_1 \iimplies x_2) = f(x_1) \iimplies f(x_2) = b \iimplies b = 1, \qquad \text{whereas} \qquad f(x_2) = b.
	\]
	As \(1 \neq b\), the elements \(x_1 \iimplies x_2\) and \(x_2\) of \(X_1 + X_2\) are distinct.
\end{proof}

\begin{theorem}\label{theorem not lacc}
	The variety \(\ELHSLat\) of equationally linear Heyting semilattices is not locally algebraically cartesian closed.
\end{theorem}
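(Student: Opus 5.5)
The plan is to argue by contradiction via Proposition~\ref{proposition lacc preserves coproducts}. Suppose \(\ELHSLat\) were locally algebraically cartesian closed. Since \(\ELHSLat\) is pointed with finite limits, the kernel functor \(\Ker_B\) over \(B=\{b<1\}\) would then preserve binary coproducts. In particular, for the two points \((A_1,\alpha_1,\beta_1)\) and \((A_2,\alpha_2,\beta_2)\) constructed above, the comparison morphism
\[
	c\colon X_1+X_2 \longrightarrow \Ker_B(A_1+_B A_2)
\]
of~\eqref{eq comparison} would be an isomorphism, and in particular a monomorphism.

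The first step is to identify \(c\) on the generators. By construction, \(c\) is induced by the restrictions of the coproduct injections \(\iota_1\kappa_1\) and \(\iota_2\kappa_2\) to the kernels. Hence \(c\) sends the generator \(x_1\in X_1+X_2\) to \(x_1\in A_1+_B A_2\), and the generator \(x_2\) to \(x_2\). Since \(c\) is a homomorphism of Heyting semilattices, this gives \(c(x_1\iimplies x_2) = x_1\iimplies x_2\) and \(c(x_2)=x_2\) in \(A_1+_B A_2\).

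The second step combines the two lemmas. By Lemma~\ref{lemma lacc collapse}, \(x_1\iimplies x_2 = x_2\) in \(A_1+_B A_2\), so \(c(x_1\iimplies x_2)=c(x_2)\). By Lemma~\ref{lemma lacc free}, however, \(x_1\iimplies x_2\neq x_2\) in \(X_1+X_2\). Therefore \(c\) is not injective. In a variety, monomorphisms are exactly the injective homomorphisms, so \(c\) is not a monomorphism, hence not an isomorphism. This contradicts the preservation of the coproduct, and so \(\ELHSLat\) is not locally algebraically cartesian closed.

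I expect little genuine difficulty here, since the substantive computations are already contained in Lemmas~\ref{lemma lacc collapse} and~\ref{lemma lacc free}. The only point needing care is the identification of \(c\) on generators: I will check that the kernel injections into \(\Ker_B(A_1+_B A_2)\), used to define \(c\), are indeed the restrictions of \(\iota_1\) and \(\iota_2\), so that the elements named \(x_1\) and \(x_2\) in both lemmas correspond under \(c\). Optionally, I will remark that \(c\) is surjective by Theorem~\ref{theorem AC for elhslat}, which exhibits the separation between \AC{} and \LACC{}. This remark is not needed for the proof itself.
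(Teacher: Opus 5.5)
Your proposal is correct and follows the paper's proof essentially verbatim. It uses the comparison morphism \(c\colon X_1+X_2\to\Ker_B(A_1+_B A_2)\) for the two points over \(B=\{b<1\}\): by Lemma~\ref{lemma lacc collapse}, \(c\) sends \(x_1\iimplies x_2\) and \(x_2\) to the same element, while by Lemma~\ref{lemma lacc free} they differ in \(X_1+X_2\), so \(c\) is not a monomorphism and Proposition~\ref{proposition lacc preserves coproducts} gives the conclusion (phrasing this as a contradiction rather than a contrapositive changes nothing).
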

\begin{proof}
	Consider the comparison morphism
	\[
		c \colon X_1 + X_2 \longrightarrow \Ker_B(A_1 +_B A_2)
	\]
	of~\eqref{eq comparison} attached to the two points \(A_1\) and \(A_2\) constructed above. By definition \(c\) restricts to \(\iota_i\kappa_i\) on \(X_i\), so it is a homomorphism sending the generators \(x_1\), \(x_2\) of \(X_1 + X_2\) to the elements denoted \(x_1\), \(x_2\) in \(A_1 +_B A_2\). Using Lemma~\ref{lemma lacc collapse},
	\[
		c(x_1 \iimplies x_2) = c(x_1) \iimplies c(x_2) = x_1 \iimplies x_2 = x_2 = c(x_2).
	\]
	By Lemma~\ref{lemma lacc free}, however, \(x_1 \iimplies x_2 \neq x_2\) in \(X_1 + X_2\). Therefore \(c\) is not a monomorphism, so it is not an isomorphism, and consequently \(\Ker_B\) does not preserve the binary coproduct \(A_1 +_B A_2\). By Proposition~\ref{proposition lacc preserves coproducts}, \(\ELHSLat\) is not locally algebraically cartesian closed.
\end{proof}

\section{Failure of cosmash associativity}\label{section cosmash}

We close with another negative result, which settles a question left open in~\cite{HSLat}. Following~\cite{Smash,HVdL}, the \defn{ternary cosmash product} \(X\diamond Y\diamond Z\) of three objects is the kernel of the canonical morphism
\[
	\Sigma_{X,Y,Z}\colon X+Y+Z \longrightarrow (X+Y)\times(X+Z)\times(Y+Z)
\]
whose three components are induced by the morphisms collapsing \(Z\), \(Y\) and \(X\), respectively. Concretely, an element of \(X+Y+Z\) lies in \(X\diamond Y\diamond Z\) precisely when it becomes trivial as soon as any one of the three variables is sent to the zero~\(1\). Iterating the binary cosmash product instead produces \((X\diamond Y)\diamond Z\); there is always a canonical comparison morphism
\[
	c\colon (X\diamond Y)\diamond Z \longrightarrow X\diamond Y\diamond Z,
\]
and, following~\cite{Cosmash}, a category is called \defn{cosmash associative} when \(c\) is an isomorphism for all \(X\), \(Y\), \(Z\). In~\cite{HSLat} it was shown that \(\HSLat\) is not cosmash associative, while it was left open whether the subvariety \(\ELHSLat\) is. We show that it is not.

The comparison \(c\) factors as
\[
	(X\diamond Y)\diamond Z \rightarrowtail (X\diamond Y)+Z \longrightarrow X+Y+Z,
\]
the second arrow being induced by the inclusion \(X\diamond Y\rightarrowtail X+Y\) and the identity on \(Z\). Its image therefore lies in the subalgebra \(\langle X\diamond Y, Z\rangle\) of \(X+Y+Z\) generated by \(X\diamond Y\) and \(Z\). Consequently, if some element of \(X\diamond Y\diamond Z\) lies outside \(\langle X\diamond Y, Z\rangle\), then \(c\) cannot be surjective, and a fortiori cannot be an isomorphism.

\begin{theorem}\label{theorem not cosmash}
	The category \(\ELHSLat\) is not cosmash associative.
\end{theorem}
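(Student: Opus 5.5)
Following the strategy outlined just before the statement, we take \(X\), \(Y\), \(Z\) to be copies of the two-element chain, with generators \(x<1\), \(y<1\), \(z<1\). The aim is to find an element of \(X+Y+Z\) that lies in \(X\diamond Y\diamond Z\) but not in the subalgebra \(\langle X\diamond Y, Z\rangle\).

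The first step is to understand \(X+Y+Z\) concretely. Since \(\ELHSLat\) is generated by chains, we may use chain-valued homomorphisms. A homomorphism from \(X+Y+Z\) to a chain \(C\) is just a choice of images of \(x\), \(y\), \(z\) in \(C\). Two terms are therefore equal in \(X+Y+Z\) exactly when they agree under every assignment into a finite chain, and it suffices to look at chains of length at most four, where the only relevant data is the relative order pattern of \(x\), \(y\), \(z\) and \(1\).

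Next we need a workable membership criterion for each side.

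For the ternary cosmash product, a term \(t(x,y,z)\) lies in \(X\diamond Y\diamond Z\) exactly when \(t(1,y,z)=t(x,1,z)=t(x,y,1)=1\) identically.

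For the other side, we first compute \(X\diamond Y\), the terms \(s(x,y)\) with \(s(1,y)=s(x,1)=1\). The natural generator is \(x\vee y=((x\iimplies y)\iimplies y)\wedge((y\iimplies x)\iimplies x)\), which is \(1\) as soon as either variable is \(1\). We would check, by enumerating the finite free algebra on two generators via chain evaluations, that \(X\diamond Y\) is a finite subalgebra generated by \(x\vee y\) together with perhaps a few further elements such as \(x\vee y\) combined with \(x\iimplies y\).

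The key observation is then the following. For any \(s\in X\diamond Y\), every assignment into a chain has \(s\geq x\vee y\). Moreover, such an \(s\) evaluates to \(1\) whenever \(x\) and \(y\) are comparable in a way that makes \(x\vee y=1\), which in a chain happens precisely when one of them equals \(1\). So, on chains, elements of \(X\diamond Y\) behave like functions of \(\max(x,y)\) and of the relative order of \(x\) and \(y\).

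The candidate element of \(X\diamond Y\diamond Z\) is \(t=x\vee y\vee z\), or a variant using implications. This element is \(1\) whenever any variable is \(1\), so it lies in \(X\diamond Y\diamond Z\). Note, however, that \(x\vee y\vee z=(x\vee y)\vee z\) lies in \(\langle X\diamond Y,Z\rangle\), since \(\vee\) is a term. So that particular element is useless, and we must instead look for an element of \(X\diamond Y\diamond Z\) that is sensitive to the relative order of \(x\) and \(y\) in a way depending on \(z\). A natural candidate is
\[
t=\bigl((x\iimplies y)\iimplies z\bigr)\vee x\vee y \quad\text{or}\quad t=(x\vee z)\wedge(y\vee z)\wedge\bigl((x\iimplies y)\vee z\bigr)\vee\dots
\]
We would choose \(t\) by computer-style case analysis on chain orderings.

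The separating invariant is as follows. Take two chain assignments \(\sigma\) and \(\sigma'\) that agree on the values of all generators of \(X\diamond Y\) and on \(z\), but on which \(t\) takes different values. Every element of \(\langle X\diamond Y,Z\rangle\) is a term in those generators and \(z\), so it cannot distinguish \(\sigma\) from \(\sigma'\). For example, on the four-element chain \(0<a<c<1\), compare \(\sigma=(x,y,z)=(0,a,c)\) with \(\sigma'=(a,0,c)\). If \(X\diamond Y\) is generated by symmetric terms in \(x\) and \(y\) (which is plausible, since \(x\vee y\) is symmetric and the cosmash is stable under swapping), these two assignments agree on \(X\diamond Y\) and on \(z\). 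It then suffices to find \(t\in X\diamond Y\diamond Z\) that is not symmetric in \(x\) and \(y\), for instance something involving \((x\iimplies y)\vee z\) suitably joined so that it becomes \(1\) when \(x=1\) or \(y=1\).

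The main obstacle is the first point: pinning down \(X\diamond Y\) exactly, or at least showing that it is invariant under the swap \(x\leftrightarrow y\) on chain evaluations, since it may well contain non-symmetric elements. If symmetry fails, we would instead pick an invariant coarser than the full assignment, namely the value vector of a finite generating set of \(X\diamond Y\) together with \(z\), computed over all small chain assignments, and search for \(t\) accordingly.
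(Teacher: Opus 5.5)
There is a genuine gap. Your framework is viable: exhibit \(t\in X\diamond Y\diamond Z\), then separate it from \(\langle X\diamond Y,Z\rangle\) using two chain-valued homomorphisms \(\sigma,\sigma'\), whose equaliser is a subalgebra and would therefore contain \(\langle X\diamond Y,Z\rangle\). But as written it is not yet a proof. No witness \(t\) is fixed and checked, and the one concrete separating pair you propose does not work.

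The symmetry hypothesis is false. The element \((x\iimplies y)\iimplies y\) lies in \(X\diamond Y\), since it becomes \(1\) when \(x=1\) or \(y=1\). Yet on \(0<a<c<1\) it evaluates to \(a\) under \(\sigma=(0,a,c)\) and to \(1\) under \(\sigma'=(a,0,c)\). So \(\sigma\) and \(\sigma'\) do not agree on \(X\diamond Y\), and this pair cannot separate anything from \(\langle X\diamond Y,Z\rangle\). The swap also contradicts your own observation that elements of \(X\diamond Y\) depend on the relative order of \(x\) and \(y\). The fallback ``search for \(t\)'' is a plan, not an argument.

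The missing ingredient is a precise, justified description of what \(X\diamond Y\) can detect. The paper obtains \(X\diamond Y=\ua(x\vee y)\) from Proposition~\ref{proposition commutator is intersection}. Alternatively, argue directly: for \(s\in X\diamond Y\) and a chain assignment with \(x<y\), quotienting the chain by \(\ua y\) sends \(y\) to \(1\), so \(s(x,1)=1\) forces \(s\geq y\). Since \(s\) also lies in the subalgebra \(\{x,y,1\}\), its value is \(y\) or \(1\), as dictated by a computation in the abstract three-element chain. This, and not the reason you gave, is why your ``key observation'' holds.

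Consequently, two assignments that agree on \(z\), on the strict order type of \((x,y)\), and on \(\max(x,y)\), but differ on the smaller of \(x\) and \(y\), agree on all of \(\langle X\diamond Y,Z\rangle\). This is exactly what the paper's \(\varphi\) encodes. The set \(D\) is the graph of the chain embedding \(C\to C'\), \(0\mapsto 1\), \(1\mapsto 2\), \(2\mapsto 3\). So the paper is in effect comparing \((x,y,z)=(1,2,1)\) with \((0,2,1)\) in \(0<1<2<3\), where \(f=(x\iimplies y)\iimplies((z\iimplies x)\iimplies y)\) takes the values \(2\) and \(3\).

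Your own first candidate \(t=((x\iimplies y)\iimplies z)\vee x\vee y\) would also work. It lies in \(X\diamond Y\diamond Z\), and on \(0<1<2<3\) the assignments \((2,0,0)\) and \((2,1,0)\) give \(t=3\) and \(t=2\). Either way, you must commit to a specific witness and a pair of assignments that provably agree on \(\langle X\diamond Y,Z\rangle\).
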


\begin{proof}
	Let \(X=Y=Z\) be the free equationally linear Heyting semilattice on a single generator, that is, the two-element chain, and write \(x\), \(y\), \(z\) for the respective generators. Then \(X+Y+Z\) is the free equationally linear Heyting semilattice on \(\{x,y,z\}\), which is finite by~\cite{NeWh} (see also Appendix~\ref{Appendix}). Consider the element
	\[
		f \coloneq (x\iimplies y) \iimplies \bigl((z\iimplies x)\iimplies y\bigr).
	\]

	\emph{The element \(f\) belongs to \(X\diamond Y\diamond Z\).} Indeed, sending any single generator to \(1\) reduces \(f\) to \(1\):
	\begin{align*}
		f|_{x=1} & = (1\iimplies y)\iimplies\bigl((z\iimplies 1)\iimplies y\bigr) = y\iimplies(1\iimplies y) = y\iimplies y = 1, \\
		f|_{y=1} & = (x\iimplies 1)\iimplies\bigl((z\iimplies x)\iimplies 1\bigr) = 1\iimplies 1 = 1,                            \\
		f|_{z=1} & = (x\iimplies y)\iimplies\bigl((1\iimplies x)\iimplies y\bigr) = (x\iimplies y)\iimplies(x\iimplies y) = 1,
	\end{align*}
	using the identities \(1\iimplies a = a\) and \(a\iimplies 1 = 1\). Hence \(f\in X\diamond Y\diamond Z\).

	\emph{The element \(f\) does not belong to \(\langle X\diamond Y, Z\rangle\).} We first observe that \(X\diamond Y = \mathord{\uparrow}(x\vee y)\), the principal filter generated by \(x\vee y\) in \(X+Y\). Indeed, since \(X\ojoin Y = X+Y\), the induced morphism \(\langle x,y\rangle\) is the identity, so the Higgins commutator \([X,Y]\) coincides with the cosmash product \(X\diamond Y\); by Proposition~\ref{proposition commutator is intersection} it equals \(X^{X+Y}\omeet Y^{X+Y} = \mathord{\uparrow}x \omeet \mathord{\uparrow}y = \mathord{\uparrow}(x\vee y)\).

	Now consider the homomorphism
	\[
		\varphi\colon X+Y+Z \longrightarrow C\times C', \qquad
		\varphi(x) = (0,0),\quad \varphi(y)=(1,2),\quad \varphi(z)=(0,1),
	\]
	into the product of the chains \(C=\{0<1<2\}\) and \(C'=\{0<1<2<3\}\). A direct computation gives
	\[
		\varphi(f) = (1,3) \qquad\text{and}\qquad \varphi(x\vee y) = (1,2).
	\]
	The image of \(X+Y\) under \(\varphi\) is the subalgebra generated by \((0,0)\) and \((1,2)\), namely the chain \(\{(0,0),(1,2),(2,3)\}\). Hence for every \(w\in X\diamond Y\) we have \(\varphi(w)\in\{(0,0),(1,2),(2,3)\}\) and, since \(w\geq x\vee y\) and \(\varphi\) preserves the order, also \(\varphi(w)\geq(1,2)\); therefore \(\varphi(w)\in\{(1,2),(2,3)\}\). Together with \(\varphi(z)=(0,1)\), this shows that \(\varphi\) maps \(\langle X\diamond Y, Z\rangle\) into the three-element chain
	\[
		D = \{(0,1),\,(1,2),\,(2,3)\},
	\]
	which is a subalgebra of \(C\times C'\) (its top \((2,3)\) being that of the whole product). As \(\varphi(f)=(1,3)\notin D\), we conclude that \(f\notin\langle X\diamond Y, Z\rangle\).

	By the discussion preceding the theorem, the comparison \(c\colon (X\diamond Y)\diamond Z\to X\diamond Y\diamond Z\) is not surjective, hence not an isomorphism. Therefore \(\ELHSLat\) is not cosmash associative.
\end{proof}

The obstruction can be understood as follows: the iterated product \((X\diamond Y)\diamond Z\) can only reach elements of the subalgebra generated by the binary cosmash \(X\diamond Y\) together with \(Z\), which the homomorphism \(\varphi\) collapses onto a single chain. The element \(f\), by contrast, genuinely interleaves the three variables through the nested implication \(z\iimplies x\), and \(\varphi\) sends it to a point off that chain. Since cosmash associativity implies the condition~\W{}~\cite{CocoThesis}, Theorem~\ref{theorem not cosmash} also shows that the converse fails: by the corollary to Theorem~\ref{theorem arithmetical and w}, the variety \(\ELHSLat\) satisfies~\W{} without being cosmash associative.

\appendix
\section{Proof of Lemma~\ref{Equations}}\label{Appendix}

The aim of this appendix is to give a proof of Lemma~\ref{Equations}. To do so, we show that in equationally linear Heyting semilattices, verifying an identity in three variables reduces to checking it in chains of length at most four. Our argument relies on constructing a certain sub-direct embedding of a finitely generated Heyting semilattice into a product of sub-directly irreducible Heyting semilattices and is largely derived from \cite{FreydHeyting}.

Let \(X\) be a Heyting semilattice and let \(x \neq 1\) be a maximal element below the top. Then either \(x\) is the unique maximal element below \(1\), or there exists \(y < 1\) in~\(X\)  such that \(y \nleq x\). Assume we are in the latter case. Choose a maximal normal subobject \(N \oleq X\) such that \(x \notin N\), and let \(q \colon X \to X/N\) denote the quotient map.

We claim that \(q(x)\) is the unique maximal element below the top in \(X/N\). Indeed, let \(y < 1\) in \(X/N\) and consider the inverse image
\[
	N' \coloneq q^{-1}(\ua{y}).
\]
Since \(\ua{y}\) is a normal subobject of \(X/N\), its inverse image \(N'\) is a normal subobject of \(X\) containing \(N\). Moreover, \(N' \supsetneq N\) because \(y < 1\). By the maximality of \(N\), it follows that \(x \in N'\), whence \(q(x) \in \ua{y}\) and therefore \(y \le q(x)\). This shows that~\(q(x)\) is the unique maximal element below \(1\) in \(X/N\). In particular, \((X/N) \setminus \{q(x)\}\) is a sub-Heyting semilattice of \(X/N\).

Following \cite{FreydHeyting}, we call such a quotient a \defn{localisation}. Given a Heyting semilattice~\(A\), we write \(\hat A\) for the Heyting semilattice obtained by adjoining a new element \(m\) satisfying \(a < m < 1\) for all \(a \in A\backslash\{1\}\). The preceding argument shows that every localisation of a Heyting semilattice is of the form \(\hat A\) for some \(A\).

We now explain how this implies that every finitely generated Heyting semilattice is finite. Recall that for elements \(x\), \(y\) of a Heyting semilattice one has \(x = y\) if and only if \((x \leftrightarrow y) = 1\). Hence, if \(x \neq y\), there exists a localisation in which their images remain distinct. It follows that every Heyting semilattice embeds into a product of its localisations.

We proceed now by induction on the number of generators. The claim is trivial for one generator. Assume that every Heyting semilattice generated by less than~\(n\) elements is finite, and let \(X\) be generated by \(n\) elements. Every localisation of \(X\) is of the form \(\hat A\), where \(A\) is generated by \(n-1\) elements, and hence is finite by the induction hypothesis. Consequently, up to isomorphism, \(X\) has only finitely many localisations (since each is of the form \(\hat A\) where \(A\) is a quotient of the free Heyting semilattice on \(n-1\) generators). Since there are only finitely many homomorphisms from a finitely generated algebra to a finite one, it follows that \(X\) embeds into a finite Heyting semilattice, and is therefore itself finite.

Moreover, since any finite Heyting semilattice has maximal elements, one easily sees that it embeds into a product of those localisations that do not send a maximal element to the top. For free Heyting semilattices on a finite set, this allows us to restrict our attention to morphisms into localisations that do not send any generator to the top. Indeed, let \(S\) be a finite set, let \(F(S)\) denote the free Heyting semilattice on \(S\), and let \(f \colon F(S) \to A\) be a surjective homomorphism. If no generator is sent to \(1\), there is nothing to prove. Otherwise, let
\[
	T \coloneq \{x \in S \mid f(x) = 1\},
\]
and define a map \(g \colon S \to \hat A\) by
\[
	g(x) =
	\begin{cases}
		m    & \text{if \(x \in T\),}             \\
		f(x) & \text{if \(x \in S \setminus T\),}
	\end{cases}
\]
where \(m\) denotes the unique maximal element below \(1\) in \(\hat A\). By the universal property of \(F(S)\), \(g\) extends uniquely to a homomorphism \(g \colon F(S) \to \hat A\). Let us define \(h \colon \hat A \to A\) by
\[
	h(x) =
	\begin{cases}
		1 & \text{if \(x = m\),} \\
		x & \text{otherwise}.
	\end{cases}
\]
Then \(h \circ g = f\), proving the claim.

All of the above applies in particular to equationally linear Heyting semilattices. In this case, one can moreover show that all localisations are chains. It follows that finitely generated free equationally linear Heyting semilattices embed into products of finite chains in such a way that no generator is sent to the top. For the free Heyting semilattice on three generators, these chains have length at most four. A direct inspection shows that there is exactly one such homomorphism to the two-element chain, six to the three-element chain (three sending two generators above the third and three sending two below the third), and six to the four-element chain. Consequently, to verify an identity in three variables it suffices to check it in each of these finite chains with generators assigned according to the corresponding homomorphisms.

\begin{proof}[Proof of Lemma~\ref{Equations}]
	Tables~\ref{Table 1}--\ref{Table 4} demonstrate that each of the equations of three variables in Lemma~\ref{Equations} holds in the chain \(\{0,m,n,1\}\) (respectively, \(\{0,m,1\}\) for equations of two variables), and therefore holds throughout \(\ELHSLat\).
	\begin{table}
		\[
			\begin{tabular}{c  c c c c c c c c c c c c c}
				\(b\)                           & \(n\) & \(n\) & \(m\) & \(0\) & \(0\) & \(m\) & \(n\) & \(m\) & \(m\) & \(n\) & \(n\) & \(m\) & \(n\) \\
				\(x\)                           & \(m\) & \(0\) & \(n\) & \(n\) & \(m\) & \(0\) & \(m\) & \(n\) & \(m\) & \(n\) & \(m\) & \(n\) & \(n\) \\
				\(y\)                           & \(0\) & \(m\) & \(0\) & \(m\) & \(n\) & \(n\) & \(m\) & \(m\) & \(n\) & \(m\) & \(n\) & \(n\) & \(n\) \\
				\(b\lhd (x\iimplies y)\)        & \(0\) & \(1\) & \(0\) & \(1\) & \(1\) & \(1\) & \(1\) & \(m\) & \(1\) & \(m\) & \(1\) & \(1\) & \(1\) \\
				\((b\lhd x)\iimplies(b\lhd y)\) & \(0\) & \(1\) & \(0\) & \(1\) & \(1\) & \(1\) & \(1\) & \(m\) & \(1\) & \(m\) & \(1\) & \(1\) & \(1\) \\
				\(b\lhd (x\wedge y)\)           & \(0\) & \(0\) & \(0\) & \(1\) & \(1\) & \(0\) & \(m\) & \(m\) & \(m\) & \(m\) & \(m\) & \(1\) & \(n\) \\
				\((b\lhd x)\wedge (b\lhd y)\)   & \(0\) & \(0\) & \(0\) & \(1\) & \(1\) & \(0\) & \(m\) & \(m\) & \(m\) & \(m\) & \(m\) & \(1\) & \(n\) \\
			\end{tabular}
		\]
		\caption{Identities (1) and (2) of Lemma~\ref{Equations}}\label{Table 1}
	\end{table}
	\begin{table}
		\[
			\begin{tabular}{c c c c}
				\(b\)                                                           &
				\(m\)                                                           & \(0\) & \(m\) \\
				\(x\)                                                           &
				\(0\)                                                           & \(m\) & \(m\) \\
				\(b\iimplies x\)                                                &
				\(0\)                                                           & \(1\) & \(1\) \\
				\((b\vee x)\iimplies (b\lhd x)\)                                &
				\(0\)                                                           & \(1\) & \(1\) \\
				\((x\iimplies b)\iimplies b\)                                   &
				\(m\)                                                           & \(1\) & \(m\) \\
				\(\bigl((b\lhd x)\iimplies (b\vee x)\bigr)\iimplies (b\vee x)\) &
				\(m\)                                                           & \(1\) & \(m\) \\
			\end{tabular}
		\]
		\caption{Identities (3) and (4) of Lemma~\ref{Equations}}\label{Table 2}
	\end{table}
	\begin{table}
		\[
			\begin{tabular}{c c c c c c c c c c c c c c}
				\(b\)                    & \(n\) & \(n\) & \(m\) & \(0\) & \(0\) & \(m\) & \(n\) & \(m\) & \(m\) & \(n\) & \(n\) & \(m\) & \(n\) \\
				\(x\)                    & \(m\) & \(0\) & \(n\) & \(n\) & \(m\) & \(0\) & \(m\) & \(n\) & \(m\) & \(n\) & \(m\) & \(n\) & \(n\) \\
				\(y\)                    & \(0\) & \(m\) & \(0\) & \(m\) & \(n\) & \(n\) & \(m\) & \(m\) & \(n\) & \(m\) & \(n\) & \(n\) & \(n\) \\
				\(b\vee (x\iimplies y)\) &
				\(n\)                    & \(1\) & \(m\) & \(m\) & \(1\) & \(1\) & \(1\) & \(m\) & \(1\) & \(n\) & \(1\) & \(1\) & \(1\)         \\
				                         &
				\(n\)                    & \(1\) & \(m\) & \(m\) & \(1\) & \(1\) & \(1\) & \(m\) & \(1\) & \(n\) & \(1\) & \(1\) & \(1\)         \\
			\end{tabular}
		\]
		\caption{Identity (5) of Lemma~\ref{Equations}; here the last line represents \((((b\lhd x) \iimplies (b\lhd y)) \iimplies (b \vee x)) \iimplies (((b\lhd y) \iimplies (b\lhd x))\iimplies (b \vee y))\)}\label{Table 3}
	\end{table}
	\begin{table}
		\[
			\begin{tabular}{c c c c c c c c c c c c c c}
				\(b\)                         &
				\(n\)                         & \(n\) & \(m\) & \(0\) & \(0\) & \(m\) & \(n\) & \(m\) & \(m\) & \(n\) & \(n\) & \(m\) & \(n\) \\[2pt]
				\(x\)                         &
				\(m\)                         & \(0\) & \(n\) & \(n\) & \(m\) & \(0\) & \(m\) & \(n\) & \(m\) & \(n\) & \(m\) & \(n\) & \(n\) \\[2pt]
				\(y\)                         &
				\(0\)                         & \(m\) & \(0\) & \(m\) & \(n\) & \(n\) & \(m\) & \(m\) & \(n\) & \(m\) & \(n\) & \(n\) & \(n\) \\[2pt]
				\((x\iimplies b)\iimplies y\) &
				\(0\)                         & \(m\) & \(0\) & \(1\) & \(1\) & \(n\) & \(m\) & \(1\) & \(n\) & \(m\) & \(n\) & \(1\) & \(n\) \\[2pt]
				                              &
				\(0\)                         & \(m\) & \(0\) & \(1\) & \(1\) & \(n\) & \(m\) & \(1\) & \(n\) & \(m\) & \(n\) & \(1\) & \(n\) \\
			\end{tabular}
		\]
		\caption{Identity (6) of Lemma~\ref{Equations}; here the last line represents \((((b \lhd  x) \iimplies (b \vee y))\iimplies (b \vee y)) \wedge ((b \vee y) \iimplies (b\lhd y))\)}\label{Table 4}
	\end{table}
\end{proof}

\bibliography{tim}{}
\bibliographystyle{amsplain-nodash}

\end{document}